\newlist{thmlist}{enumerate}{1}
\setlist[thmlist]{label=(\roman{thmlisti}),noitemsep}
\newtheorem{theorem}{Theorem}[section]
\newtheorem{lemma}[theorem]{Lemma}
\newtheorem*{claim*}{Claim}
\newtheorem{corollary}[theorem]{Corollary}
\newtheorem{Main Conjecture}[theorem]{Main Conjecture}
\newtheorem{conjecture}[theorem]{Conjecture}
\theoremstyle{remark}
\newtheorem{definition}[theorem]{Definition}
\newenvironment{example}
  {\pushQED{\qed}\examplex}
  {\popQED\endexamplex}
\newtheorem{remark}[theorem]{Remark}
\theoremstyle{plain}
\DeclareMathOperator{\Ess}{Ess}
\DeclareMathOperator{\rank}{rank}
\DeclareMathOperator{\Spec}{Spec}
\DeclareMathOperator{\Dom}{Dom}
\newcommand{\hgt}{{\rm{ht }}}
\newcommand{\CC}{\mathbb{C}}
\begin{document}
\pagestyle{plain}

\title{Diagonal degenerations of matrix Schubert varieties}

\author{Patricia Klein}
\address{Texas A\&M University, Department of Mathematics, College Station, TX, USA.}
\email[Patricia Klein]{pjklein@tamu.edu}


\maketitle

\begin{abstract}
Knutson and Miller (2005) established a connection between the anti-diagonal Gr\"obner degenerations of matrix Schubert varieties and the pre-existing combinatorics of pipe dreams.  They used this correspondence to give a geometrically-natural explanation for the appearance of the combinatorially-defined Schubert polynomials as representatives of Schubert classes.  Recently, Hamaker, Pechenik, and Weigandt (2022) proposed a similar connection between diagonal degenerations of matrix Schubert varieties and bumpless pipe dreams, newer combinatorial objects introduced by Lam, Lee, and Shimozono (2021).  Hamaker, Pechenik, and Weigandt described new generating sets of the defining ideals of matrix Schubert varieties and conjectured a characterization of permutations for which these generating sets form diagonal Gr\"obner bases.  They proved special cases of this conjecture and described diagonal degenerations of matrix Schubert varieties in terms of bumpless pipe dreams in these cases.  The purpose of this paper is to prove the conjecture in full generality.  The proof uses a connection between liaison and geometric vertex decomposition established in earlier work with Rajchgot (2021).
\end{abstract}

\tableofcontents

\section{Introduction}

Schubert polynomials, introduced by Lascoux and Sch\"utzenberger \cite{LS82} based on the work of Bernstein, Gel'fand, and Gel'fand \cite{BGG73}, give combinatorially-natural representatives of Schubert classes in the cohomology ring of the complete flag variety.  Matrix Schubert varieties, defined by Fulton \cite{Ful92}, are generalized determinantal (affine) varieties corresponding to a permutation $w \in S_n$.  A key insight of Knutson and Miller \cite{KM05} is that the combinatorics of Schubert polynomials is naturally reflected in the geometry of the initial ideals of the defining ideals $I_w$ of matrix Schubert varieties $X_w$ under anti-diagonal degeneration (i.e., Gr\"obner degeneration under a term order in which the leading term of the determinant of a generic matrix is the product of the entries along the anti-diagonal).  In particular, Knutson and Miller were able to identify irreducible components of anti-diagonal initial schemes with the \emph{pipe dreams} that had arisen in earlier combinatorial study of Schubert polynomials \cite{BB93, FK94}.  They were also able to give a geometric explanation for the positivity of coefficients of Schubert polynomials, a fact not obvious from their recursive definition, and show that the multidegrees of $X_w$ give the torus-equivariant cohomology classes of Schubert varieties.

Later, Knutson, Miller, and Yong \cite{KMY09} connected the geometry of diagonal degenerations (defined analogously to anti-diagonal degenerations) of matrix Schubert varieties corresponding to \emph{vexillary} permutations to the combinatorics of flagged tableaux.  More recently, Hamaker, Pechenik, and Weigandt \cite{HPW}  proposed a diagonal Gr\"obner basis, which they call the set of CDG generators (see Subsection \ref{subsect:CDG}), for a wider class of permutations that includes the vexillary permutations.  They proved that CDG generators form a diagonal Gr\"obner basis when $w$ is what they called \emph{banner} and used that result to connect the geometry of the diagonal degenerations of $X_w$ to the \emph{bumpless pipe dreams} introduced by Lam, Lee, and Shimozono \cite{LLS} (closely related to the $6$-vertex ice model used by Lascoux \cite{Las, Wei, BS}).  With an eye towards extending their main theorem \cite[Theorem 6.4]{HPW}, Hamaker, Pechenik, and Weigandt made the following conjecture:

\begin{conjecture}\cite[Conjecture 7.1]{HPW}\label{conj:mainresult}
Let $w \in S_n$ be a permutation. The CDG generators are a diagonal Gr\"obner basis for $I_w$ if and only if $w$ avoids all eight of the patterns \[
13254, 21543, 214635, 215364, 215634, 241635, 315264, 4261735.
\]
\end{conjecture}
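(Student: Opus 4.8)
The plan is to prove the two directions of the equivalence by rather different means: the ``only if'' direction by a pattern-containment closure argument that reduces it to a finite check, and the substantive ``if'' direction by an inductive geometric vertex decomposition resting on the connection between geometric vertex decomposition and liaison from the author's earlier work with Rajchgot.

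For the ``only if'' direction, I would show that the property ``the CDG generators of $I_w$ form a diagonal Gr\"obner basis'' is inherited downward under pattern containment: if $v \in S_m$ contains $w \in S_n$ as a pattern, then the occurrence of $w$ inside $v$ is witnessed by a coordinate subspace of the $m \times m$ generic matrix on which the CDG generators of $I_v$ restrict, up to signs and a relabeling of variables, to the CDG generators of $I_w$, while a diagonal term order restricts to a diagonal term order; hence if the CDG generators fail to be a Gr\"obner basis for $w$, they fail for $v$. Granting this, the implication follows once one checks that each of the eight listed permutations is itself an obstruction -- a finite computation, and the set of eight was already singled out in \cite{HPW}. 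The delicate point here is that the CDG generators are defined through the Rothe diagram and essential set of $w$, so I must verify that pattern restriction is compatible with this particular choice of generating set and not merely with the ideal $I_w$ itself.

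The heart of the argument is the ``if'' direction, which I would prove by induction on the number of variables occurring in the generators of $I_w$, establishing the stronger assertion that when $w$ avoids the eight patterns, $I_w$ admits a geometric vertex decomposition with respect to some generic-matrix entry $y = z_{ij}$ and some diagonal term order, $\init_y(I_w) = C \cap (N + \langle y \rangle)$, in which $C$ and $N$ are, after a coordinate change, matrix Schubert ideals $I_{w'}$ and $I_{w''}$ for permutations $w'$ and $w''$ that again avoid all eight patterns (with the usual degenerate and trivial cases allowed). The role of the Klein--Rajchgot machinery is that, once such a decomposition is available and is shown to be square-free in $y$, the CDG generators of $I_w$ -- whose $y$-initial forms I will check recover exactly the CDG generators of $I_{w'}$ and $I_{w''}$ -- are forced to be a diagonal Gr\"obner basis, since a compatibly chosen Gr\"obner basis of $\init_y(I_w)$ lifts to a Gr\"obner basis of $I_w$ and the inductive hypothesis applies to $I_{w'}$ and $I_{w''}$. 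The base cases -- for instance $I_w$ the zero ideal or a complete intersection -- are immediate, and the vexillary and banner cases settled in \cite{KMY09, HPW} serve as useful anchors.

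I expect the decomposition step to be the main obstacle, and within it the problem of choosing the entry $z_{ij}$: I anticipate it is read off from a suitably extremal corner of the Rothe diagram (or of the essential set) of $w$, chosen so that (a) the diagonal initial form of $I_w$ with respect to $z_{ij}$ splits as the required intersection, (b) the permutations $w'$ and $w''$ are explicitly identifiable, (c) pattern avoidance descends to $w'$ and $w''$, and (d) the CDG generating set degenerates compatibly. The combinatorial lemma behind (c) -- that no forbidden pattern is created -- together with the assertion that an entry satisfying all of (a)--(d) can always be found precisely when $w$ avoids the eight patterns, is where the casework concentrates and where the eight patterns must enter the argument; the governing phenomenon is the interaction between the diagonal term order and which entries of the generic matrix appear in the leading terms of the essential minors, which is exactly what the eight pattern conditions control.
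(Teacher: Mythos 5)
Your overall plan --- the ``only if'' direction by showing that failure of the CDG Gr\"obner property is closed upward under pattern containment and reducing to a finite check of the eight base patterns, and the ``if'' direction by an inductive geometric-vertex-decomposition argument powered by the Klein--Rajchgot connection between geometric vertex decomposition and liaison --- is essentially the one the paper carries out. But there is a genuine gap in your sketch of the ``if'' direction, and it sits exactly where most of the paper's work actually lives.

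You assert that in the decomposition $\init_y(I_w) = C \cap (N + \langle y\rangle)$, \emph{both} $C$ and $N$ are, after a coordinate change, matrix Schubert ideals of pattern-avoiding permutations, so that the inductive hypothesis applies to both. For $N = N_{y,I_w}$ this is correct and is proved in the paper (Lemma~\ref{lem:SchubertDel}, Corollary~\ref{cor:GrobnerDel}). For $C = C_{y,I_w}$ it is false: as the paper notes explicitly, $C_{y,I_w}$ is ``itself not typically a Schubert determinantal ideal,'' and no coordinate change fixes this --- the $y$-partials of the essential minors do not reassemble into the defining ideal of any matrix Schubert variety. Consequently the inductive hypothesis gives you nothing for $C$, and the Gr\"obner-basis property for its generators $\{q_1,\ldots,q_k,h_1,\ldots,h_\ell\}$ must be established by a separate argument. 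That argument (Lemmas~\ref{lem:GrobnerQs} and~\ref{lem:GrobnerLink}) is the technical core of the paper: it chooses a suitable lower outside corner of $D_w$, invokes Conca--De Negri--Gorla/Boocher on maximal minors of sparse matrices and Gorla's ladder-determinantal Gr\"obner bases, carries out a Buchberger-style analysis of the $s$-polynomials between the $q_a$'s and $h_b$'s, and assembles the pieces via the concatenation criterion (Lemma~\ref{lem:concatinate}). Your sketch compresses this to a hypothesis that isn't true.

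A secondary misreading concerns what the Klein--Rajchgot machinery demands. You describe it as lifting a Gr\"obner basis of $\init_y(I_w)$ once the decomposition is ``shown to be square-free in $y$.'' What \cite[Corollary 4.13]{KR} (Lemma~\ref{lem:mainApp}) actually requires, beyond the Gr\"obner bases for $C$ and $N$ and the height inequalities, is the containment $I_2\!\begin{pmatrix} q_1 & \cdots & q_k \\ r_1 & \cdots & r_k\end{pmatrix} \subseteq N$. Verifying that one ideal containment is precisely the payoff of the liaison connection, and the paper devotes a paragraph of Theorem~\ref{thm:if} to it (passing to the un-zeroed Fulton analogues $q'_a,r'_a$, noting $q'_ar'_b - q'_br'_a$ lies in a generic determinantal ideal with a known diagonal Gr\"obner basis, and using $y$-compatibility to force the reduction into $N_{y,I_w}$). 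Squarefreeness is not the thing to aim for.

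On the ``only if'' side you are correct in outline, but ``restrict the CDG generators of $I_v$ to a coordinate subspace to recover those of $I_w$'' is too crude: the dominant part and essential set of $v$ differ from those of $w$, and merely setting the extra row and column to zero does not send the CDG generators of $I_v$ onto those of $I_w$. The paper's Theorem~\ref{onlyif} instead starts from a concrete failed $s$-polynomial reduction $r$ among the CDG generators of $I_w$, embeds $X_w$ as a submatrix of $X_v$, lifts the offending generators to $f' = x_{i,v_i}f+\varepsilon_f$ and $g' = x_{i,v_i}g+\varepsilon_g$ in $G_v$, and explicitly produces from $r$ an element of $I_v$ with no reduction by $G_v$. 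Your stated ``delicate point'' is the right worry; it names work that must actually be done rather than work that can be waved through.
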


The purpose of the present paper is to prove Conjecture \ref{conj:mainresult}, which we do as Corollaries \ref{cor:mainIf} and \ref{cor:mainOnlyif}.  An important step in our proof is an application of the author's work with Rajchgot \cite[Corollary 4.13]{KR}, which uses the connection between \emph{liaison}, whose use in studying Gr\"obner bases is described in \cite{GMN13}, and \emph{geometric vertex decomposition}, introduced in \cite{KMY09}, to essentially reduce the requirements of the liaison-theoretic approach to a check on one ideal containment.  

The liaison-theoretic approach to studying Gr\"obner bases is the subject of \cite{GMN13}.  In it, Gorla, Migliore, and Nagel show that the module isomorphisms making up a particular kind of pair of Gorenstein links, called an \emph{elementary $G$-biliaison}, naturally encode information about the HIlbert function that facilitates the establishment of Gr\"obner bases.  The examples in \cite{GMN13} are of various generalized determinantal ideals, and the framework they introduce is the foundation of the present paper.  

Gorla, Migliore, and Nagel's \cite{GMN13} work builds on a long history of commutative algebraic inquiry into determinantal ideals, especially since Hochster and Eagon's \cite{HE} results on determinantal ideals and Cohen--Macaulayness.  Abhyankar \cite{Abh88} studied special cases of what are now known as one-sided ladder determinantal varieties in connection to Young tableaux and Schubert varieties in flag manifolds and showed those special cases to be irreducible.  Narasimhan \cite{Nar86} established Gr\"obner bases for a more general class of one-sided and two-sided ladder determinantal ideals and used this result to show that all of the varieties in the class he studied are irreducible, extending Abhyankar's result.  Gonciuliea and Miller \cite{GM00}  extended the Gr\"obner basis results to allow sizes of minors to vary in various regions within a ladder, and Gorla \cite{Gor07} showed in full generality that the natural generators of a two-sided mixed ladder determinantal ideal form a Gr\"obner basis.  Herzog and Trung \cite{HT92} gave analogous results for cogenerated and Pfaffian ideals and used them to give an elegant formula the multiplicities of the corresponding quotient rings.  See also \cite{DNS11} for related results.  Bruns and Conca \cite{BC98} gave Gr\"obner bases for powers of determinantal ideals, which they used to show the Cohen--Macaulayness of Rees algebras associated to determinantal ideals.  In \cite{Gor08}, Gorla gave a quite substantial generalization of Gaeta's theorem.  See also, for example, \cite{Bru91,  CH97, Con98, BC01,BRW05}.

The appearance of pattern avoidance in determining when CDG generators form a Gr\"obner basis is natural in light of similar results in the Schubert literature.  For example, pattern avoidance has previously been seen to govern the singularity \cite{LS90} and Gorenstein property \cite{WY06} of Schubert varieties as well as when the \emph{Fulton generators} (see Subsection \ref{subsect:MSv}) of $I_w$ constitute a diagonal Gr\"obner basis \cite{KMY09}.  (Again, we refer to \cite{Gor07} for the first proof of one direction of this last result in the language of mixed ladder determinantal varieties.)  For a survey of results in this vein, see \cite{AB16}.

In \cite{HPW}, the authors note that Conjecture \ref{conj:mainresult} implies the following conjecture by the work of \cite{FMSD}: \begin{conjecture}\cite[Conjecture 7.2]{HPW}\label{conj:Schub}
If the (single) Schubert polynomial of $w \in S_n$ is a multiplicity-free sum of monomials, then the CDG generators of $I_w$ are a diagonal Gr\"obner basis.
\end{conjecture}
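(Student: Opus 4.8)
The plan is to deduce Conjecture~\ref{conj:Schub} from Conjecture~\ref{conj:mainresult} --- proved in this paper as Corollaries~\ref{cor:mainIf} and~\ref{cor:mainOnlyif} --- together with the classification of multiplicity-free (``zero-one'') Schubert polynomials due to Fink, M\'esz\'aros, and St.\ Dizier~\cite{FMSD}. First I would recall their result that $\mathfrak{S}_w$ is multiplicity-free if and only if $w$ avoids an explicit finite list $P$ of permutation patterns whose shortest members have length five, these being $12543$, $13254$, $13524$, and $21543$. Then, since Corollary~\ref{cor:mainIf} guarantees that $w$ avoiding the eight patterns of Conjecture~\ref{conj:mainresult} forces the CDG generators of $I_w$ to be a diagonal Gr\"obner basis, it suffices to establish the purely combinatorial implication: if $w$ avoids every pattern on $P$, then $w$ avoids each of $13254$, $21543$, $214635$, $215364$, $215634$, $241635$, $315264$, and $4261735$.

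Because pattern containment is transitive, this last step reduces to a finite verification: it is enough to exhibit, for each of those eight patterns, an element of $P$ that it contains as a permutation pattern. Five of the eight --- $13254$, $21543$, $215364$, $215634$, and $315264$ --- occur on $P$ directly, so for them nothing further is required. For the other three I would check by inspection that $214635$ contains $13524 \in P$ (the entries in positions $1,3,4,5,6$ standardize to $13524$), that $241635$ also contains $13524 \in P$ (positions $1,2,4,5,6$), and that $4261735$ contains $421635 \in P$ (positions $1,2,4,5,6,7$). Transitivity then yields the claimed implication, hence Conjecture~\ref{conj:Schub}.

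I do not expect any genuine mathematical obstacle: all of the depth sits in Conjecture~\ref{conj:mainresult}, established elsewhere in the paper, and in the classification of~\cite{FMSD} --- indeed the implication ``Conjecture~\ref{conj:mainresult} implies Conjecture~\ref{conj:Schub}'' was already noted in~\cite{HPW}. The only work is bookkeeping: reproducing the list $P$ of~\cite{FMSD} correctly and running the handful of pattern-containment checks above. The single subtlety worth flagging is that the argument needs $215634$ itself to lie on $P$ --- equivalently, that $\mathfrak{S}_{215634}$ has a repeated monomial --- since $215634$ is easily seen to contain none of the length-five patterns $12543$, $13254$, $13524$, $21543$ of $P$, so were it not a member of $P$ the reduction above would break down.
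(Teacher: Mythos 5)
Your overall strategy is exactly what the paper (following~\cite{HPW}) implicitly relies on: deduce Conjecture~\ref{conj:Schub} from the now-proved Conjecture~\ref{conj:mainresult} via the Fink--M\'esz\'aros--St.\ Dizier pattern classification of zero-one Schubert polynomials, by checking that each of the eight patterns in Conjecture~\ref{conj:mainresult} contains some pattern from the FMSD list. The logical scaffolding (transitivity of pattern containment, reduction to a finite check) is sound, and the paper itself gives no more detail than citing \cite{HPW} and \cite{FMSD}, so filling in the check is the right thing to do.

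However, one of your containment checks rests on a misstatement of the FMSD list. The zero-one classification of \cite{FMSD} says $\mathfrak{S}_w$ is multiplicity-free if and only if $w$ avoids the twelve patterns
\[
12543,\ 13254,\ 13524,\ 13542,\ 21543,\ 125364,\ 125634,\ 215364,\ 215634,\ 315264,\ 315624,\ 315642,
\]
which contains no length-seven pattern at all. In particular $421635$ is \emph{not} in $P$, so your claim that $4261735$ contains $421635\in P$ does not establish anything. The fix is easy: $4261735$ contains $315624\in P$ via positions $1,2,3,5,6,7$ (the values $4,2,6,7,3,5$ standardize to $315624$). With that correction, all eight patterns of Conjecture~\ref{conj:mainresult} are accounted for --- five ($13254$, $21543$, $215364$, $215634$, $315264$) lie in $P$ directly, $214635$ and $241635$ both contain $13524\in P$, and $4261735$ contains $315624\in P$ --- and the implication goes through as you intended. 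Be careful to state the FMSD list correctly if you write this up; the bookkeeping is the whole content of the argument, so a wrong entry in $P$ is not a cosmetic slip.
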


\noindent We refer the reader to \cite{HPW, Wei} for a more information on Schubert polynomials and bumpless pipe dreams.

\bigskip

\textbf{The structure of this paper:} Section \ref{sect:prelim} is devoted to preliminaries on matrix Schubert varieties and CDG generators.  In Section \ref{sect:backward}, we prove the backward direction of Conjecture \ref{conj:mainresult}, and, in Section \ref{sect:forward}, we prove the forward direction.  Finally, in Section \ref{sect:intuition}, we use geometric vertex decomposition to give some intuition on what unifies the eight non-CDG permutations listed in Conjecture \ref{conj:mainresult}.

\bigskip

\textbf{Acknowledgements:} The author thanks Zach Hamaker, Oliver Pechenik, and Anna Weigandt for helpful conversations and for graciously sharing their \LaTeX code for Rothe diagrams.  She is also grateful to Jenna Rajchgot for many very valuable conversations both directly concerning this paper and also on related material.   She thanks all four for comments on an earlier draft of this document.  The author additionally thanks the anonymous referees for their careful reading and feedback, which greatly improved this document.

\section{Preliminaries}\label{sect:prelim}

In this section, we review the basics of matrix Schubert varieties as well as the CDG generators introduced in \cite{HPW}.  For a more detailed introduction to matrix Schubert varieties, we refer the reader to \cite[Chapter 10]{Ful96}.  For basic properties of and standard terminology for Gr\"obner bases, we refer the reader to \cite{Eis95}.  

\subsection{Matrix Schubert varieties}\label{subsect:MSv}

We begin by describing how each permutation is associated to the affine variety called a matrix Schubert variety.  Throughout this paper, we will take  $[n] = \{1,2,\dots,n\}$  for any $n \geq 1$ and let $S_n$ denote the symmetric group on $[n]$.  Each permutation $w \in S_n$ is is a bijection $w:[n] \rightarrow [n]$, which we will record in its one-line notation $w = w_1 w_2 \dots w_n$ where $w_i = w(i)$.  To every $w \in S_n$, we associate a \emph{Rothe diagram} $D_w$, defined as follows:
\[
D_w = \{(i,j) \in [n] \times [n]: w(i) > j, w^{-1}(j) > i\}.
\]
A Rothe diagram has the following visualization: In an $n \times n$ grid, place a $\bullet$ in position $(i,w_i)$ for each $i \in [n]$, and draw a line down from each $\bullet$ to the bottom of the grid and a line to the right from each $\bullet$ to the side of the grid.  Then $D_w$ is the set of boxes in the grid without a $\bullet$ in them or a line through them.
For example, $D_{315642}$ is the set $\{(1,1),(1,2),(3,2),(3,4),(4,2),(4,4),(5,2)\}$ and corresponds to the visualization below, in which the elements of $D_{315642}$ appear in gray and will be referred to as the \emph{boxes} of $w$:
\[
  \begin{tikzpicture}[x=1.5em,y=1.5em]
      \draw[color=black, thick](0,1)rectangle(6,7);
     \filldraw[color=black, fill=gray!30, thick](0,6)rectangle(1,7);
      \filldraw[color=black, fill=gray!30, thick](1,6)rectangle(2,7);
      \filldraw[color=black, fill=gray!30, thick](1,4)rectangle(2,5);
      \filldraw[color=black, fill=gray!30, thick](3,4)rectangle(4,5);
       \filldraw[color=black, fill=gray!30, thick](1,3)rectangle(2,4);
      \filldraw[color=black, fill=gray!30, thick](3,3)rectangle(4,4);
      \filldraw[color=black, fill=gray!30, thick](1,2)rectangle(2,3);      
     \draw[thick,color=red] (6,6.5)--(2.5,6.5)--(2.5,1);
     \draw[thick,color=red] (6,5.5)--(0.5,5.5)--(0.5,1);
     \draw[thick,color=red] (6,4.5)--(4.5,4.5)--(4.5,1);
     \draw[thick,color=red] (6,3.5)--(5.5,3.5)--(5.5,1);
     \draw[thick,color=red] (6,2.5)--(3.5,2.5)--(3.5,1);
     \draw[thick,color=red] (6,1.5)--(1.5,1.5)--(1.5,1);
     \filldraw [black](2.5,6.5)circle(.1);
     \filldraw [black](0.5,5.5)circle(.1);
      \filldraw [black](4.5,4.5)circle(.1);
     \filldraw [black](5.5,3.5)circle(.1);
     \filldraw [black](3.5,2.5)circle(.1);
      \filldraw [black](1.5,1.5)circle(.1);
      \draw[color=black](2,6)rectangle(3,7);
      \draw[color=black](3,6)rectangle(4,7); 
      \draw[color=black](4,6)rectangle(5,7);
      \draw[color=black](5,6)rectangle(6,7); 
      \draw[color=black](0,5)rectangle(1,6);
      \draw[color=black](1,5)rectangle(2,6);  
      \draw[color=black](2,5)rectangle(3,6);
      \draw[color=black](3,5)rectangle(4,6); 
      \draw[color=black](4,5)rectangle(5,6);
      \draw[color=black](5,5)rectangle(6,6); 
     \draw[color=black](0,4)rectangle(1,5); 
      \draw[color=black](2,4)rectangle(3,5);
      \draw[color=black](4,4)rectangle(5,5);
      \draw[color=black](5,4)rectangle(6,5); 
      \draw[color=black](0,3)rectangle(1,4); 
      \draw[color=black](2,3)rectangle(3,4);
      \draw[color=black](4,3)rectangle(5,4);
      \draw[color=black](5,3)rectangle(6,4); 
      \draw[color=black](0,2)rectangle(1,3); 
      \draw[color=black](2,2)rectangle(3,3);
      \draw[color=black](3,2)rectangle(4,3);
      \draw[color=black](4,2)rectangle(5,3);
      \draw[color=black](5,2)rectangle(6,3); 
      \draw[color=black](1,1)rectangle(2,2);  
      \draw[color=black](2,1)rectangle(3,2);
      \draw[color=black](3,1)rectangle(4,2); 
      \draw[color=black](4,1)rectangle(5,2);
      \draw[color=black](5,1)rectangle(6,2); 
     \end{tikzpicture}.
\]

The \emph{Coxeter length} of the permutation $w$ is equal to its inversion number, i.e., 
\[| \{ (i, j) \mid i < j, w_i > w_j \}|,
\] which is in turn equal to $|D_w|$.  For example, the Coxeter length of $315642$ is $7$, easily read off as the number of gray boxes in the diagram above.

\begin{definition}
Fix a permutation $w = w_1 \cdots w_n \in S_n$ and a permutation $v  = v_1\ldots v_k \in S_k$ with $k \leq n$.  If there is some substring $w_{i_1} \cdots w_{i_k}$ of $w$ satisfying $w_{i_j} < w_{i_\ell}$ exactly when $v_j < v_\ell$, we say that $w$ \emph{contains} $v$.  Otherwise, we say that $w$ \emph{avoids} $v$.
\end{definition}

For example, $w = 13254$ contains $v = 2143$ with $3254$ the substring of $w$ realizing the containment, but $w$ does not contain $v' = 3214$.  Notice that if $w_{i_1} \cdots w_{i_k}$ satisfies $w_{i_j} < w_{i_\ell}$ exactly when $v_j < v_\ell$, then the Rothe diagram of $v$ can be obtained from that of $w$ by restricting to the rows $i_1, \ldots, i_k$ and columns $w_{i_1}, \ldots, w_{i_k}$ in the $[n] \times [n]$ grid giving the visualization of $D_w$.  

By restricting to the maximally southeast boxes of connected components of $D_w$, we define the \emph{essential set} of $w$: 
\[
\Ess(w) = \{(i,j) \in D_w \mid (i+1,j), (i,j+1) \notin D_w\}.
\]
In the example above, $\Ess(315642)=\{(1,2),(4,4), (5,2)\}$.  Borrowing a term from the literature on ladder determinantal varieties, if $(i,j) \in \Ess(w)$ and there is no $(i'j') \in \Ess(w)$ with $i \leq i'$, $j \leq j'$, and $(i,j) \neq (i',j')$, we will say that $(i,j)$ is a \emph{lower outside corner} of $D_w$.  In the case of $w = 315642$, $(4,4)$ and $(5,2)$ are lower outside corners, but $(1,2)$ is not.  

To every permutation $w \in S_n$, we associate a \emph{rank function} $\rank_w : [n] \times [n] \to \mathbb{Z}$, where 
\[
\rank_w(i,j) = |\{k \leq i \mid w(k) \leq j\}|,
\] and the rank matrix $M_w$ whose $(i,j)^{th}$ entry is $\rank_w(i,j)$.  Visually, we assign to every square $(i,j)$ in the $[n] \times [n]$ grid underlying the Rothe diagram of $w$ the number of $\bullet$s weakly northwest of $(i,j)$.  In our running example $w = 315642$, one may find it helpful to record the information as  \definecolor{burntorange}{rgb}{0.8, 0.33, 0.0}
\[
\begin{minipage}{0.4\textwidth}\
\[
\hspace{1cm} M_w = \begin{bmatrix}
\boxed{0} & {\color{burntorange}\boxed{0}} & 1 & 1 & 1 & 1\\
1 & 1 & 2 & 2 & 2 & 2\\
1 & \boxed{1} & 2 & \boxed{2} & 3 & 3\\
1 & \boxed{1} & 2 & {\color{burntorange}\boxed{2}} & 3 & 4\\
1 & {\color{burntorange}\boxed{1}} & 2 & 3 & 4 & 5\\
1 & 2 & 3 & 4 & 5 & 6\\
\end{bmatrix} 
\]

\end{minipage} \hspace{1cm} \mbox{ or as } \hspace{0.6cm} \begin{minipage}{.4\textwidth}
\hspace{0.5cm} 
  \begin{tikzpicture}[x=1.5em,y=1.5em]
      \draw[color=black, thick](0,1)rectangle(6,7);
     \filldraw[color=black, fill=gray!30, thick](0,6)rectangle(1,7);
      \filldraw[color=black, fill=gray!30, thick](1,6)rectangle(2,7);
      \filldraw[color=black, fill=gray!30, thick](1,4)rectangle(2,5);
      \filldraw[color=black, fill=gray!30, thick](3,4)rectangle(4,5);
       \filldraw[color=black, fill=gray!30, thick](1,3)rectangle(2,4);
      \filldraw[color=black, fill=gray!30, thick](3,3)rectangle(4,4);
      \filldraw[color=black, fill=gray!30, thick](1,2)rectangle(2,3);      
     \draw[thick,color=red] (6,6.5)--(2.5,6.5)--(2.5,1);
     \draw[thick,color=red] (6,5.5)--(0.5,5.5)--(0.5,1);
     \draw[thick,color=red] (6,4.5)--(4.5,4.5)--(4.5,1);
     \draw[thick,color=red] (6,3.5)--(5.5,3.5)--(5.5,1);
     \draw[thick,color=red] (6,2.5)--(3.5,2.5)--(3.5,1);
     \draw[thick,color=red] (6,1.5)--(1.5,1.5)--(1.5,1);
     \filldraw [black](2.5,6.5)circle(.1);
     \filldraw [black](0.5,5.5)circle(.1);
      \filldraw [black](4.5,4.5)circle(.1);
     \filldraw [black](5.5,3.5)circle(.1);
     \filldraw [black](3.5,2.5)circle(.1);
      \filldraw [black](1.5,1.5)circle(.1);
      \draw[color=black](2,6)rectangle(3,7);
      \draw[color=black](3,6)rectangle(4,7); 
      \draw[color=black](4,6)rectangle(5,7);
      \draw[color=black](5,6)rectangle(6,7); 
      \draw[color=black](0,5)rectangle(1,6);
      \draw[color=black](1,5)rectangle(2,6);  
      \draw[color=black](2,5)rectangle(3,6);
      \draw[color=black](3,5)rectangle(4,6); 
      \draw[color=black](4,5)rectangle(5,6);
      \draw[color=black](5,5)rectangle(6,6); 
     \draw[color=black](0,4)rectangle(1,5); 
      \draw[color=black](2,4)rectangle(3,5);
      \draw[color=black](4,4)rectangle(5,5);
      \draw[color=black](5,4)rectangle(6,5); 
      \draw[color=black](0,3)rectangle(1,4); 
      \draw[color=black](2,3)rectangle(3,4);
      \draw[color=black](4,3)rectangle(5,4);
      \draw[color=black](5,3)rectangle(6,4); 
      \draw[color=black](0,2)rectangle(1,3); 
      \draw[color=black](2,2)rectangle(3,3);
      \draw[color=black](3,2)rectangle(4,3);
      \draw[color=black](4,2)rectangle(5,3);
      \draw[color=black](5,2)rectangle(6,3); 
      \draw[color=black](1,1)rectangle(2,2);  
      \draw[color=black](2,1)rectangle(3,2);
      \draw[color=black](3,1)rectangle(4,2); 
      \draw[color=black](4,1)rectangle(5,2);
      \draw[color=black](5,1)rectangle(6,2); 
     \node at (0.5,6.5) {$0$};
      \node at (1.5,6.5) {$0$};
      \node at (1.5,4.5) {$1$};
      \node at (1.5,3.5) {$1$};
      \node at (1.5,2.5) {$1$};
       \node at (3.5,4.5) {$2$};
       \node at (3.5,3.5) {$2$};
     \end{tikzpicture}.
  \end{minipage}   
       \]
Note that the transpose of rank matrix of $w$ and transpose of the Rothe diagram of $w$ correspond to the rank matrix and Rothe diagram of $w^{-1}$, respectively, because the inverse of a permutation matrix is its transpose.  We have recorded elements of $D_w$ in the rank matrix $M_w$ as boxes and colored the boxes of the essential set orange in anticipation of our discussion of Fulton generators, below.

Let $\mbox{Mat}_{n,n}$ denote the affine $n^2$-space of complex $n \times n$ matrices.  Given $N \in \mbox{Mat}_{n,n}$ and subsets $A,B \subseteq [n]$, let $N_{A,B}$ be the submatrix of $N$ determined by the rows whose indices are elements of $A$ and the columns whose indices are elements of $B$.  Then the \emph{matrix Schubert variety} of $w \in S_n$ is the affine variety \[
X_w = \left\{Z \in \mbox{Mat}_{n,n}\mid \rank{Z_{[i],[j]}} \leq \rank_w(i,j)\ \mbox{for all}\ (i,j) \in [n] \times [n] \right\}.
\]

Let $Z = (z_{i,j})_{(i,j) \in [n] \times [n]}$ be a matrix of distinct indeterminates and $R = \mathbb{C}[Z]$ so that $\Spec(R)$ is identified with $\mbox{Mat}_{n,n}$.  The \emph{Schubert determinantal ideal} of $w$ is 
\[
I_w = ( (\rank_w(i,j)+1)\text{-minors in } Z_{[i],[j]} \mid (i,j) \in [n] \times [n] ) \subseteq R.
\]  This naive generating set will typically include a good deal of redundancy, and so we will more often consider the smaller set of \emph{Fulton generators} of $I_w$: \[
	\{(\rank_w(i,j)+1)\text{-minors in } Z_{[i],[j]} \mid (i,j) \in \Ess(w) \}.
\]  Fulton showed that the Fulton generators indeed generate $I_w$ \cite[Lemma 3.10]{Ful92}, that $I_w$ is prime, and, in particular, that $X_w \cong \Spec(R/I_w)$ as reduced schemes  \cite[Proposition 3.3]{Ful92}.  The height of the ideal $I_w$ (equivalently, codimension of $\Spec(R/I_w)$ in $\Spec(R)$) is equal to the Coxeter length of $w$ \cite[Proposition 3.3]{Ful92}.

In the example $w = 315642$, the Fulton generators of $I_w$ are $z_{1,1}$, $z_{1,2}$, the $2$-minors of \begin{center} $\begin{bmatrix}
z_{1,1} & z_{1,2}\\
z_{2,1} & z_{2,2}\\
z_{3,1} & z_{3,2}\\
z_{4,1} & z_{4,2}\\
z_{5,1} & z_{5,2}\\
\end{bmatrix},
$ and the $3$-minors of  $\begin{bmatrix}
z_{1,1} & z_{1,2} & z_{1,3} & z_{1,4}\\
z_{2,1} & z_{2,2} & z_{2,3} & z_{2,4}\\
z_{3,1} & z_{3,2} & z_{3,3} & z_{3,4}\\
z_{4,1} & z_{4,2} & z_{4,3} & z_{4,4}\\
\end{bmatrix}.
$ \end{center} 

\subsection{CDG generators}\label{subsect:CDG}

In \cite{HPW}, the authors introduce \emph{CDG generators} of defining ideals of matrix Schubert varieties.  These generators are named after Conca, De Negri, and Gorla, whose result \cite[Theorem 4.2]{CDG15} served as inspiration for the generating set used in \cite{HPW} and, in particular, for Conjecture \ref{conj:mainresult}.

\begin{definition}
Fix a permutation $w \in S_n$ and an $n \times n$ matrix $Z = (z_{i,j})_{(i,j) \in [n] \times [n]}$ of distinct indeterminates.  Let $\Dom(w) = \{(i,j) \in D_w \mid \rank_w(i,j) = 0\}$, and call $\Dom(w)$ the \emph{dominant part} of the Rothe diagram $D_w$.  From $Z$, form the matrix $Z'$ by replacing $z_{i,j}$ by $0$ whenever $(i,j) \in \Dom(w)$.  Set \[
	G'_w = \{(\rank_w(i,j)+1)\text{-minors in } Z'_{[i],[j]} \mid (i,j) \in \Ess(w)\setminus \Dom(w) \},
\] and $G_w = G'_w \cup \{z_{i,j} \mid (i,j) \in \Dom(w)\}$.  We call $G_w$ the set of \emph{CDG generators} of $I_w$.
\end{definition}

\begin{example}
If $w = 315642$ the CDG generators of $I_w$ are $z_{1,1}$, $z_{1,2}$, the $2$-minors of \begin{center} $\begin{bmatrix}
0 & 0\\
z_{2,1} & z_{2,2}\\
z_{3,1} & z_{3,2}\\
z_{4,1} & z_{4,2}\\
z_{5,1} & z_{5,2}\\
\end{bmatrix},
$ and the $3$-minors of  $\begin{bmatrix}
0 & 0 & z_{1,3} & z_{1,4}\\
z_{2,1} & z_{2,2} & z_{2,3} & z_{2,4}\\
z_{3,1} & z_{3,2} & z_{3,3} & z_{3,4}\\
z_{4,1} & z_{4,2} & z_{4,3} & z_{4,4}\\
\end{bmatrix}.
$ \end{center}
\end{example}

\noindent Notice that $\Dom(w) = \emptyset$ if and only if $w_1 = 1$, in which case the CDG generators and the Fulton generators coincide.  

\subsection{Gr\"obner bases}\label{subsect:Grobner}

Let $S=\mathbb{C}[x_1, \ldots, x_d]$.  A \emph{term order} $<$ on $S$ is a total order on the monic monomials of $S$ so that $1 \leq \mu$ for every monomial $\mu$ of $S$ and so that, for all monomials $\mu_1, \mu_2$, and $\nu$, $\mu_1<\mu_2$ implies $\mu_1 \nu < \mu_2 \nu$.  Let $f = \sum_{i=1}^k c_i \mu_i \in S$ where $c_i \in \CC$ and the $\mu_i$ are monic monomials (written in the usual way so that $\mu_i \neq \mu_j$ whenever $i \neq j$ and no $c_i$ is  $0$).  Fix $i$ so that $c_i \mu_i >c_j \mu_j$ whenever $i \neq j$, and define the \emph{leading term} of $f$ to be $LT(f)=c_i \mu_i$.  For an ideal $I$ of $S$, define the \emph{initial ideal} of $I$ to be $LT(I) = (LT(f) \mid f \in I)$.  A generating set $\mathcal{G}$ of $I$ is called a \emph{Gr\"obner basis} if $LT(I) = (LT(g) \mid g \in \mathcal{G})$.  For a detailed introduction to the theory of Gr\"obner bases, including Buchberger's algorithm, we refer the reader to \cite[Chapter 15]{Eis95}.

\begin{definition}
When the set of CDG generators forms a Gr\"obner basis for the Schubert determinantal ideal $I_w$ under every diagonal term order, we will say that the permutation \emph{$w$ is CDG}.  
\end{definition}

\bigskip

\section{Rothe diagrams of CDG permutations}\label{sect:backward}

\subsection{Obstructions to being CDG}

We begin this section by describing in terms of the Rothe diagram $D_w$ conditions that prevent $w$ from being CDG.  In Subsection \ref{subsect:backward}, we will show that when $D_w$ does not satisfy these conditions, $w$ is necessarily CDG.

Before we begin, we note that the visualization of the Rothe diagram of $214635$ is obtained from that of $215364$ by transposition.  The same is true of $315264$ and $241635$.  The visualizations of the Rothe diagrams of the remaining permutations listed in Conjecture \ref{conj:mainresult} are self transpose.   This symmetry will allow us to consolidate some of our case work below.  We understand the cardinal directions in reference to $D_w$ in terms of its visualization.  We say, for example, that $(i',j')$ is ``strictly southeast" of $(i,j)$ to mean that both $i'>i$ and also $j'>j$, or that $(i',j')$ is ``strictly south and weakly east" of $(i,j)$ to mean $i'>i$ and also $j' \geq j$.

\newpage

\begin{definition}
The permutation $w$ has an \emph{obstruction} of \begin{itemize}
\item \emph{Type 1} if there is some $(r,s) \in \Dom(w) \cap \Ess(w)$ and two distinct entries $(i,j)$ and $(i',j')$ of $D_w$ strictly southeast of $(r,s)$ with $i' \neq i$ and $j' \neq j$,
\item \emph{Type 2} if there is some $(r,s) \in \Dom(w) \cap \Ess(w)$ and two distinct entries $(i,j)$ and $(i,j')$ of $\Ess(w)$ strictly southeast of $(r,s)$ with \[
\max_k \{(k,j) \in \Dom(w)\} = \max_k \{(k,j') \in \Dom(w)\}
\] or, symmetrically, two distinct entries $(i,j)$ and $(i',j)$ of $\Ess(w)$ strictly southeast of $(r,s)$ with \[
\max_\ell \{(i,\ell) \in \Dom(w)\} = \max_\ell \{(i',\ell) \in \Dom(w)\},
\]
\item \emph{Type 3} if there are two distinct entries $(i,j)$ and $(i',j')$ of $\Ess(w) \setminus \Dom(w)$ with $(i',j')$ strictly southeast of $(i,j)$.\qedhere
\end{itemize}
\end{definition}

\begin{example}

The permutation $321654$ has an obstruction of Type $1$ with $(r,s) = (2,1)$, $(i,j) = (4,5)$ and $(i',j') = (5,4)$.  These three essential boxes are shaded in light grey.

\[
  \begin{tikzpicture}[x=1.5em,y=1.5em]
      \draw[color=black, thick](0,1)rectangle(6,7);
      \filldraw [color = lightgray] (0,5)rectangle(1,6);
      \filldraw [color = lightgray] (4,3)rectangle(5,4);
     \filldraw [color = lightgray] (3,2)rectangle(4,3);
     \draw[color=black](0,6)rectangle(1,7);
      \draw[color=black](1,6)rectangle(2,7);
      \draw[color=black](1,4)rectangle(2,5);
      \draw[color=black](3,4)rectangle(4,5);
       \draw[color=black](1,3)rectangle(2,4);
      \draw[color=black](3,3)rectangle(4,4);
      \draw[color=black](1,2)rectangle(2,3);
      \draw[color=black](0,1)rectangle(1,2);        
     \draw[thick,color=red] (6,6.5)--(2.5,6.5)--(2.5,1);
     \draw[thick,color=red] (6,5.5)--(1.5,5.5)--(1.5,1);
     \draw[thick,color=red] (6,4.5)--(0.5,4.5)--(0.5,1);
     \draw[thick,color=red] (6,3.5)--(5.5,3.5)--(5.5,1);
     \draw[thick,color=red] (6,2.5)--(4.5,2.5)--(4.5,1);
     \draw[thick,color=red] (6,1.5)--(3.5,1.5)--(3.5,1);
     \filldraw [black](2.5,6.5)circle(.1);
     \filldraw [black](0.5,4.5)circle(.1);
      \filldraw [black](4.5,2.5)circle(.1);
     \filldraw [black](5.5,3.5)circle(.1);
     \filldraw [black](3.5,1.5)circle(.1);
      \filldraw [black](1.5,5.5)circle(.1);
      \draw[color=black](2,6)rectangle(3,7);
      \draw[color=black](3,6)rectangle(4,7); 
      \draw[color=black](4,6)rectangle(5,7);
      \draw[color=black](5,6)rectangle(6,7); 
      \draw[color=black](0,5)rectangle(1,6);
      \draw[color=black](1,5)rectangle(2,6);  
      \draw[color=black](2,5)rectangle(3,6);
      \draw[color=black](3,5)rectangle(4,6); 
      \draw[color=black](4,5)rectangle(5,6);
      \draw[color=black](5,5)rectangle(6,6); 
     \draw[color=black](0,4)rectangle(1,5); 
      \draw[color=black](2,4)rectangle(3,5);
      \draw[color=black](4,4)rectangle(5,5);
      \draw[color=black](5,4)rectangle(6,5); 
      \draw[color=black](0,3)rectangle(1,4); 
      \draw[color=black](2,3)rectangle(3,4);
      \draw[color=black](4,3)rectangle(5,4);
      \draw[color=black](5,3)rectangle(6,4); 
      \draw[color=black](0,2)rectangle(1,3); 
      \draw[color=black](2,2)rectangle(3,3);
      \draw[color=black](3,2)rectangle(4,3);
      \draw[color=black](4,2)rectangle(5,3);
      \draw[color=black](5,2)rectangle(6,3); 
      \draw[color=black](1,1)rectangle(2,2);  
      \draw[color=black](2,1)rectangle(3,2);
      \draw[color=black](3,1)rectangle(4,2); 
      \draw[color=black](4,1)rectangle(5,2);
      \draw[color=black](5,1)rectangle(6,2); 
     \end{tikzpicture}.
\]

The permutation $w=4263751$ has an obstruction of Type $2$ with $(r,s) = (1,3)$, $(i,j) = (3,5)$, and $(i,j') = (5,5)$.  These three essential boxes are shaded in light grey.  Here $\max_\ell \{(3,\ell) \in \Dom(w)\} =1= \max_\ell \{(5,\ell) \in \Dom(w)\}$.  

\[
  \begin{tikzpicture}[x=1.5em,y=1.5em]
      \draw[color=black, thick](0,1)rectangle(7,8);
      \filldraw [color = lightgray] (2,7)rectangle(3,8);
      \filldraw [color = lightgray] (4,3)rectangle(5,4);
     \filldraw [color = lightgray] (4,5)rectangle(5,6);
     \draw[color=black](0,6)rectangle(1,7);
      \draw[color=black](1,6)rectangle(2,7);
      \draw[color=black](1,4)rectangle(2,5);
      \draw[color=black](3,4)rectangle(4,5);
       \draw[color=black](1,3)rectangle(2,4);
      \draw[color=black](3,3)rectangle(4,4);
      \draw[color=black](1,2)rectangle(2,3);  
       \draw[color=black](0,1)rectangle(1,2);  
      \draw[thick,color=red] (7,7.5)--(3.5,7.5)--(3.5,1);    
     \draw[thick,color=red] (7,6.5)--(1.5,6.5)--(1.5,1);
     \draw[thick,color=red] (7,5.5)--(5.5,5.5)--(5.5,1);
     \draw[thick,color=red] (7,4.5)--(2.5,4.5)--(2.5,1);
     \draw[thick,color=red] (7,3.5)--(6.5,3.5)--(6.5,1);
     \draw[thick,color=red] (7,2.5)--(4.5,2.5)--(4.5,1);
     \draw[thick,color=red] (7,1.5)--(0.5,1.5)--(0.5,1);
     \filldraw [black](0.5,1.5)circle(.1);
     \filldraw [black](2.5,4.5)circle(.1);
      \filldraw [black](4.5,2.5)circle(.1);
     \filldraw [black](5.5,5.5)circle(.1);
     \filldraw [black](3.5,7.5)circle(.1);
      \filldraw [black](1.5,6.5)circle(.1);
       \filldraw [black](6.5,3.5)circle(.1);
      \draw[color=black](2,6)rectangle(3,7);
      \draw[color=black](3,6)rectangle(4,7); 
      \draw[color=black](4,6)rectangle(5,7);
      \draw[color=black](5,6)rectangle(6,7); 
      \draw[color=black](0,5)rectangle(1,6);
      \draw[color=black](1,5)rectangle(2,6);  
      \draw[color=black](2,5)rectangle(3,6);
      \draw[color=black](3,5)rectangle(4,6); 
      \draw[color=black](4,5)rectangle(5,6);
      \draw[color=black](5,5)rectangle(6,6); 
      \draw[color=black](0,4)rectangle(1,5); 
      \draw[color=black](2,4)rectangle(3,5);
      \draw[color=black](4,4)rectangle(5,5);
      \draw[color=black](5,4)rectangle(6,5); 
      \draw[color=black](0,3)rectangle(1,4); 
      \draw[color=black](2,3)rectangle(3,4);
      \draw[color=black](4,3)rectangle(5,4);
      \draw[color=black](5,3)rectangle(6,4); 
      \draw[color=black](0,2)rectangle(1,3); 
      \draw[color=black](2,2)rectangle(3,3);
      \draw[color=black](3,2)rectangle(4,3);
      \draw[color=black](4,2)rectangle(5,3);
      \draw[color=black](5,2)rectangle(6,3); 
      \draw[color=black](1,1)rectangle(2,2);  
      \draw[color=black](2,1)rectangle(3,2);
      \draw[color=black](3,1)rectangle(4,2); 
      \draw[color=black](4,1)rectangle(5,2);
      \draw[color=black](5,1)rectangle(6,2); 
      \draw[color=black](0,7)rectangle(1,8); 
      \draw[color=black](1,7)rectangle(2,8); 
      \draw[color=black](2,7)rectangle(3,8);
      \draw[color=black](3,7)rectangle(4,8);
      \draw[color=black](4,7)rectangle(5,8);
      \draw[color=black](5,7)rectangle(6,8); 
      \draw[color=black](6,7)rectangle(7,8); 
      \draw[color=black](6,1)rectangle(7,2); 
       \draw[color=black](6,2)rectangle(7,3); 
       \draw[color=black](6,3)rectangle(7,4); 
       \draw[color=black](6,4)rectangle(7,5); 
       \draw[color=black](6,5)rectangle(7,6); 
       \draw[color=black](6,6)rectangle(7,7); 
     \end{tikzpicture}.
\]

The permutation $214365$ has an obstruction of Type $3$ with $(i,j) = (3,3)$ and $(i',j') = (5,5)$.  These two essential boxes are shaded in light grey. 

\[
  \begin{tikzpicture}[x=1.5em,y=1.5em]
      \draw[color=black, thick](0,1)rectangle(6,7);
     \filldraw [color = lightgray] (4,2)rectangle(5,3);
     \filldraw [color = lightgray] (2,4)rectangle(3,5);
     \draw[color=black](0,6)rectangle(1,7);
      \draw[color=black](1,6)rectangle(2,7);
      \draw[color=black](1,4)rectangle(2,5);
      \draw[color=black](3,4)rectangle(4,5);
       \draw[color=black](1,3)rectangle(2,4);
      \draw[color=black](3,3)rectangle(4,4);
      \draw[color=black](1,2)rectangle(2,3);
      \draw[color=black](0,1)rectangle(1,2);        
     \draw[thick,color=red] (6,6.5)--(1.5,6.5)--(1.5,1);
     \draw[thick,color=red] (6,5.5)--(0.5,5.5)--(0.5,1);
     \draw[thick,color=red] (6,4.5)--(3.5,4.5)--(3.5,1);
     \draw[thick,color=red] (6,3.5)--(2.5,3.5)--(2.5,1);
     \draw[thick,color=red] (6,2.5)--(5.5,2.5)--(5.5,1);
     \draw[thick,color=red] (6,1.5)--(4.5,1.5)--(4.5,1);
     \filldraw [black](2.5,3.5)circle(.1);
     \filldraw [black](0.5,5.5)circle(.1);
      \filldraw [black](4.5,1.5)circle(.1);
     \filldraw [black](5.5,2.5)circle(.1);
     \filldraw [black](3.5,4.5)circle(.1);
      \filldraw [black](1.5,6.5)circle(.1);
      \draw[color=black](2,6)rectangle(3,7);
      \draw[color=black](3,6)rectangle(4,7); 
      \draw[color=black](4,6)rectangle(5,7);
      \draw[color=black](5,6)rectangle(6,7); 
      \draw[color=black](0,5)rectangle(1,6);
      \draw[color=black](1,5)rectangle(2,6);  
      \draw[color=black](2,5)rectangle(3,6);
      \draw[color=black](3,5)rectangle(4,6); 
      \draw[color=black](4,5)rectangle(5,6);
      \draw[color=black](5,5)rectangle(6,6); 
     \draw[color=black](0,4)rectangle(1,5); 
      \draw[color=black](2,4)rectangle(3,5);
      \draw[color=black](4,4)rectangle(5,5);
      \draw[color=black](5,4)rectangle(6,5); 
      \draw[color=black](0,3)rectangle(1,4); 
      \draw[color=black](2,3)rectangle(3,4);
      \draw[color=black](4,3)rectangle(5,4);
      \draw[color=black](5,3)rectangle(6,4); 
      \draw[color=black](0,2)rectangle(1,3); 
      \draw[color=black](2,2)rectangle(3,3);
      \draw[color=black](3,2)rectangle(4,3);
      \draw[color=black](4,2)rectangle(5,3);
      \draw[color=black](5,2)rectangle(6,3); 
      \draw[color=black](1,1)rectangle(2,2);  
      \draw[color=black](2,1)rectangle(3,2);
      \draw[color=black](3,1)rectangle(4,2); 
      \draw[color=black](4,1)rectangle(5,2);
      \draw[color=black](5,1)rectangle(6,2); 
     \end{tikzpicture}.
\]
\end{example}

\begin{lemma}\label{lem:obstruction1} 
If the permutation $w \in S_n$ has an obstruction of Type $1$, then $w$ contains $21543$, $215634$, $214635$, $215364$, or $13254$.
\end{lemma}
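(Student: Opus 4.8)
The plan is to assume the existence of a Type 1 obstruction and construct an explicit pattern-containing substring of $w$ directly from the combinatorial data, splitting into cases according to the relative positions of the three boxes involved. So suppose $(r,s) \in \Dom(w) \cap \Ess(w)$ and $(i,j)$, $(i',j')$ are two distinct boxes of $D_w$ strictly southeast of $(r,s)$ with $i \neq i'$ and $j \neq j'$. Without loss of generality, by transposing $D_w$ if necessary (and using the transpose-stability of the pattern list noted just before Lemma~\ref{lem:obstruction1}), we may assume $i < i'$; then the hypothesis $j \neq j'$ gives either $j < j'$ or $j > j'$, and I would handle both. The first step is to translate ``$(r,s) \in \Dom(w)$'' into information about $w$ in one-line notation: $\rank_w(r,s) = 0$ means there is no $\bullet$ weakly northwest of $(r,s)$, i.e. $w_k > s$ for all $k \le r$; and $(r,s) \in D_w$ means $w_r > s$ and $w^{-1}(s) > r$. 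Likewise $(i,j),(i',j') \in D_w$ encode inequalities $w_i > j$, $w_{i'} > j'$, $w^{-1}(j) > i$, $w^{-1}(j') > i'$, and being ``strictly southeast of $(r,s)$'' gives $i,i' > r$ and $j,j' > s$.

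The core of the argument is then to identify which rows and columns of the $[n]\times[n]$ grid to restrict to in order to read off one of the five patterns, using the remark in the excerpt that restricting $D_w$ to chosen rows $i_1 < \cdots < i_k$ and columns $w_{i_1},\dots,w_{i_k}$ realizes the pattern whose Rothe diagram is the restricted one. Row $r$ contributes a descent-like feature because $w_r > s$ while the column $s$ has its dot strictly below row $r$; the two boxes $(i,j),(i',j')$ strictly southeast contribute a further ``staircase'' of inversions. I expect the natural choice to involve the rows $r, i, i'$ together with the rows $w^{-1}(s)$, $w^{-1}(j)$ (or $w^{-1}(j')$) where the relevant dots sit, and correspondingly the columns $s, j, j'$ and the values $w_r, w_i, w_{i'}$; one then checks that the induced pattern on these (at most six, and in the $4261735$-free situation at most five) positions is one of $13254, 21543, 214635, 215364, 215634$. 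The dominance condition on $(r,s)$ is what forces the leading part of the pattern to look like the ``$1\cdots$'' or ``$21\cdots$'' prefix: since every $w_k$ with $k \le r$ exceeds $s$, and $s \ge 1$, either $w_r$ is comparatively small among later values (giving the $\dots 3254$-type tail) or there is an earlier smaller value creating the $21$ prefix. I would organize the casework by (a) whether $j < j'$ or $j > j'$ and (b) the position of $w^{-1}(s)$ relative to $i$ and $i'$, and in each case exhibit the explicit $4$- or $5$- or $6$-element index set and verify the pattern by inspecting the induced relative order.

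The main obstacle I anticipate is the bookkeeping: there are several genuinely different configurations (the two dots $w_i, w_{i'}$ can be ordered either way, the column dots $w^{-1}(j), w^{-1}(j'), w^{-1}(s)$ can interleave with $r, i, i'$ in several ways), and one must make sure the chosen index set really is increasing and really induces one of the five listed patterns rather than, say, $4261735$ — the latter is in the obstruction-free list precisely because it requires two boxes that are \emph{not} in relative SE position, which is excluded here since $(i,j)$ and $(i',j')$ are strictly SE of each other only trivially, so the Type 1 configuration genuinely lives in a small pattern. A clean way to control this is to first argue that we may take $(i,j)$ and $(i',j')$ to be essential (or at least lower outside corners), shrinking the relevant window, and then to observe that the sub-permutation on rows $\{r\} \cup \{\text{row of each relevant dot}\}$ has length at most $7$ with a forced dominant box, which by a short finite check must contain one of the five patterns. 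I would present the reduction carefully and then either do the handful of remaining cases explicitly or cite the finite enumeration.
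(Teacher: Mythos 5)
Your outline correctly identifies the broad strategy---translate the Type~1 data into positions of $\bullet$s in the Rothe diagram, restrict to a small window, and read off one of the five short patterns---and your translation of $\Dom(w)$, $D_w$, and the strict-southeast conditions into one-line inequalities is accurate. But the proposal stops precisely where the proof begins: the casework is never actually carried out, and the specific choices you gesture at do not obviously work. You suggest producing the ``$21$'' prefix from rows $r$ and $w^{-1}(s)$, but the dot $(r, w_r)$ has $w_r$ uncontrolled (possibly far east of the boxes $(i,j)$ and $(i',j')$), so those boxes need not lie southeast of it. The paper instead uses the $\bullet$ in column $s+1$ (at some row $a \le r$) and the $\bullet$ in row $r+1$ (at some column $\le s$); these sit close to the dominant corner, and one can check that both boxes are forced to lie strictly southeast of both dots. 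Moreover, the branching is not merely on $j<j'$ versus $j>j'$: in the northeast case one must further split on whether there exists a ``wildcard'' dot inside a certain rectangle (yielding $21543$ if so, $215634$ if not), and the northwest case has a longer chain of such sub-splits producing the remaining patterns. None of these sub-cases, nor the position inequalities they rest on, appear in your outline.

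Your fallback plan---reduce to essential $(i,j),(i',j')$ and finish by finite enumeration---is also not established. Replacing each box by an essential box weakly southeast of it can collapse the distinct-row/distinct-column property, so the reduction is not automatically available; and the proposed finite check is neither specified nor performed. Finally, your reason for why $4261735$ cannot arise here (``two boxes not in relative SE position'') is not a correct diagnosis: it is excluded simply because the Type~1 configuration, with the dots the paper chooses, uses at most six dots whose relative order is forced into one of the five short patterns, whereas $4261735$ arises only from Type~3 obstructions (Lemma~\ref{lem:obstruction3}). In short, you have the right target and the right first step, but the substance of the argument---which dots to choose, which inequalities they satisfy, and which pattern results in each configuration---is missing.
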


An example illustrating some of the cases involved in the proof of Lemma \ref{lem:obstruction1} appears below the proof itself as Example \ref{ex:casesExample}.

\begin{proof}
Fix a permutation $w \in S_n$ that has an obstruction of Type $1$, and fix entries $(r,s)$, $(i,j)$, and $(i',j')$ as in the definition of an obstruction of Type $1$.  Consider the visualization of the Rothe diagram $D_w$.

Label the $\bullet$ in the column $s+1$ with $(a,w_a)$ and the $\bullet$ in row $r+1$ with $(b,w_b)$.  Notice $a<b$ and $w_a>w_b$.   Because $(i,j)$ and $(i',j')$ are strictly southest of $(r,s)$, both $(i,j)$ and $(i',j')$ must be south of row $b$ and east of column $w_a$.  We consider two orientations of $(i,j)$ and $(i',j')$.  Without loss of generality, assume $i<i'$.

First, if $(i,j)$ is strictly northeast of $(i',j')$, then we label the $\bullet$ in row $i$ with $(c,w_c)$ and the $\bullet$ in column $j'$ with $(d,w_d)$.  Now $a<b<c<d$ and $w_b<w_a<w_d<w_c$.  (Here $c=i$ and $w_d = j'$.  Similar renamings will occur below.)  If there is any $\bullet$ in any row strictly between $c$ and $d$ whose column index is strictly between $w_d$ and $w_c$, choose one and name it $(e,w_e)$.  Then we will have $a<b<c<e<d$ and $w_b<w_a<w_d<w_e<w_c$, which is to say that $w$ contains $21543$.  Otherwise, the $\bullet$ in row $i'$, which we call $(f,w_f)$, must be east of column $w_c$, and the $\bullet$ in column $j$, which we call $(g,w_g)$, must be south of row $d$, and so $a<b<c<f<d<g$ and $w_b<w_a<w_d<w_f<w_c<w_e$, which is to say that $w$ contains $215634$.

Alternatively, if $(i,j)$ is strictly northwest of $(i',j')$, we label the $\bullet$ in row $i'$ with $(c, w_c)$, the $\bullet$ in column $j'$ with $(d,w_d)$, the $\bullet$ in row $i$ with $(e,w_e)$, and the $\bullet$ in column $j$ with $(f,w_f)$.  If $w_e>w_c$, then $a<b<e<c<d$ while $w_b<w_a<w_d<w_c<w_e$, so $w$ contains $21543$.  Similarly, if $f>d$, then $w$ contains $21543$.  Hence, we may now assume that either $w_e<w_d<w_c$ or $w_d<w_e<w_c$ and either $f<c<d$ or $c<f<d$.  Each of these four possibilities require the containment of $215634$, $215364$, $214635$, or $13254$ (ignoring $(b,w_b)$ in case $w_e<w_d<w_c$ and $f<c<d$ and by using all six dots in the other three cases).
\end{proof}

\begin{example}\label{ex:casesExample}
We give an illustration of the case of $(i,j)$ strictly northeast of $(i',j')$ to demonstrate the process of considering allowable regions of the visualization of $D_w$ for $\bullet$s we know must exist but whose locations are unknown.  Either at least one the $\bullet$s in column $j$ or row $i'$ falls in Region I (in blue), or both fall in Region II (in green).  If the former, then $w$ contains $21543$, and, if the latter, then $w$ contains $215634$.
\definecolor{electricblue}{rgb}{0.49, 0.98, 1.0}
\definecolor{palegreen}{rgb}{0.6, 0.98, 0.6}
\begin{center}
\[
  \begin{tikzpicture}[x=2em,y=2em]
      \draw[color=black, thick](0,1)rectangle(7.5,8);
      \draw[color = black, thick](4.5,8)--(4.5,7)--(1.5,7)--(1.5,5)--(0.5,5)--(0.5,3)--(0,3);
     \draw[thick,color=red] (7.5,6.75)--(1.75,6.75)--(1.75,1);
     \draw[thick,color=red] (7.5,4.75)--(0.75,4.75)--(0.75,1);
     \filldraw [black](1.75,6.75)circle(.1);
     \filldraw [black](0.75,4.75)circle(.1);
     \node at (2.5,7.5) {$\Dom(w)$};
     \node at (.95,5.35){$(r,s)$};
     \node[draw] at (5.45,4) {$(i,j)$};
     \node[draw] at (2.75,2.5) {$(i',j')$};																				
   \draw[thick,color=red] (7.5,4)--(6.25,4)--(6.25,1);
     \draw[thick,color=red] (7.5,1.75)--(2.75,1.75)--(2.75,1);
     \filldraw [black](6.25,4)circle(.1);
     \filldraw [black](2.75,1.75)circle(.1);
     \filldraw[color = electricblue, thick](3.6,2.25)--(5.25,2.25)--(5.25,1.8)--(5.75,1.8)--(5.75,2.25)--(6.2,2.25)--(6.2,2.75)--(5.75,2.75)--(5.75,3.5)--(5.25,3.5)--(5.25,2.75)--(3.6,2.75)--(3.6,2.25);
     \node at (4.5,2.5) {$I$};
     \filldraw[color = green](5.25,1.05)rectangle(5.75,1.65);
     \filldraw[color = green](6.3,2.25)rectangle(7.45,2.75);
     \node at (6.9,2.5) {$II$};
      \node at (5.5,1.35) {$II$};
        \end{tikzpicture}.
\]
\end{center}
\end{example}

\begin{lemma}\label{lem:obstruction2}
If the permutation $w \in S_n$ contains an obstruction of Type $2$, then $w$ contains $21543$, $214635$, $241635$, $215364$, or $315264$.
\end{lemma}
\begin{proof}
Fix a permutation $w \in S_n$ that has an obstruction of Type $2$.  We will first assume that we have fixed $(i,j)$ and $(i,j')$ as in the definition of a Type $2$ obstruction with $j'<j$ and $(r,s)$ assumed to be the easternmost element of $\Dom(w) \cap \Ess(w)$ northwest of both $(i,j)$ and $(i,j')$.  As before, we consider the visualization of the Rothe diagram $D_w$.

Label the $\bullet$ in column $s+1$ with $(a,w_a)$ and the $\bullet$ in the row $r+1$ with $(b,w_b)$.  Notice $a<b$ and $w_a>w_b$.  Label the $\bullet$ in row $i$ with $(c,w_c)$, the $\bullet$ in column $j$ with $(d,w_d)$, and the $\bullet$ in column $j'$ with $(e,w_e)$.  If $e>d$, then we have $a<b<c<d<e$ and $w_b<w_a<w_e<w_d<w_c$, which is to say that $w$ contains $21543$.  Now assume $e<d$.  Because $(i',j'), (i,j) \in \Ess(w)$, there must be some $\bullet$ in column $j'+1$, which we label $(f,w_f)$, north of row $i$.  Because of the easternmost assumption on $(r,s)$ and because $\max_k \{(k,j) \in \Dom(w)\} = \max_k \{(k,j') \in \Dom(w)\}$, we must have that $f>a$.  If $f>b>a$, then $w$ contains $214635$ and, if $b>f>a$, then $w$ contains $241635$.  

A parallel argument shows that if $w$ has an obstruction of Type 2 with $i'<i$, $j' = j$, and $\max_\ell \{(i,\ell) \in \Dom(w)\} = \max_\ell \{(i',\ell) \in \Dom(w)\}$, then $w$ contains $21543$, $215364$, or $315264$.
\end{proof}

\begin{lemma}\label{lem:obstruction3}
If the permutation $w \in S_n$ has an obstruction of Type $3$, then $w$ contains $13254$, $21543$, $214635$, $215364$, $215634$, $241635$, $315264$, or $4261735$.  
\end{lemma}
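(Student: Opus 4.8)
The plan is to follow the template already used in Lemmas~\ref{lem:obstruction1} and~\ref{lem:obstruction2}: fix the combinatorial data of the obstruction, mark in the visualization of $D_w$ every $\bullet$ that is forced to exist, split into the finitely many cases determined by which regions those $\bullet$s can occupy, and in each case read off one of the eight patterns. So I would fix essential boxes $(i,j),(i',j') \in \Ess(w) \setminus \Dom(w)$ with $i < i'$ and $j < j'$, and begin by enumerating the forced $\bullet$s. Because $(i,j) \in D_w$, there is a $\bullet$ in row $i$ strictly east of column $j$ (namely at $(i,w(i))$) and a $\bullet$ in column $j$ strictly south of row $i$ (namely at $(w^{-1}(j),j)$). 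Because $(i,j) \notin \Dom(w)$ we have $\rank_w(i,j) \geq 1$, and the $\bullet$ witnessing this cannot lie in row $i$ or column $j$ (since $w(i) > j$ and $w^{-1}(j) > i$), so there is a $\bullet$ strictly northwest of $(i,j)$. Because $(i,j) \in \Ess(w)$, neither $(i,j+1)$ nor $(i+1,j)$ lies in $D_w$, which forces either $w(i) = j+1$ or a $\bullet$ in column $j+1$ strictly north of row $i$, and either $w^{-1}(j) = i+1$ or a $\bullet$ in row $i+1$ strictly west of column $j$. All four of these statements hold verbatim at $(i',j')$.

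Next I would split on the rank dichotomy. If $\rank_w(i',j') = \rank_w(i,j)$, then the rectangle $[i'] \times [j']$ contains no $\bullet$ outside $[i] \times [j]$; in particular the row-$i$ $\bullet$ lies strictly east of column $j'$ and the column-$j$ $\bullet$ lies strictly south of row $i'$, which rigidly constrains the picture. If $\rank_w(i',j') > \rank_w(i,j)$, then there is an additional $\bullet$ in $[i'] \times [j']$ lying outside $[i] \times [j]$, i.e.\ genuinely between the two essential boxes. Combining this dichotomy with the boundedly many local configurations at $(i,j)$ and at $(i',j')$ recorded above, and with the finitely many orderings of the deep $\bullet$s relative to one another, each leaf of the resulting tree pins down a subword of $w$ on at most seven of the marked $\bullet$s, and I would verify that in every leaf this subword realizes one of $13254, 21543, 214635, 215364, 215634, 241635, 315264, 4261735$. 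The pattern $4261735$ should appear exactly in the ``generic'' branch where seven distinct $\bullet$s occur in general position, consistent with the fact that $w = 4261735$ itself realizes a Type~$3$ obstruction (both $(3,3)$ and $(5,5)$ lie in $\Ess(w) \setminus \Dom(w)$, with $(5,5)$ strictly southeast of $(3,3)$). I would also invoke the transpose symmetry noted just before Lemma~\ref{lem:obstruction1}, which swaps $214635 \leftrightarrow 215364$ and $241635 \leftrightarrow 315264$ and preserves the Type~$3$ condition, to halve the case-work.

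The hard part here is organizational rather than conceptual: the case split involves several independent binary choices — the local structure at each of the two essential boxes, the rank dichotomy, and the relative order of the deepest $\bullet$s — and these multiply, so the real effort goes into arranging the enumeration to be visibly exhaustive while remaining readable, and into checking that no leaf produces a configuration outside the list of eight. I expect the completed argument to be somewhat longer than that of Lemma~\ref{lem:obstruction1} but no deeper.
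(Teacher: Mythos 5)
Your plan matches the paper's at the highest level --- mark forced $\bullet$s, split into finitely many cases, read off patterns, exploit transpose symmetry --- but it misses the single most important structural step in the paper's proof: the reduction to Lemma~\ref{lem:obstruction1}. The paper opens by observing that if any $(r,s) \in \Dom(w) \cap \Ess(w)$ lies strictly northwest of $(i,j)$, then $(r,s)$ together with $(i,j)$ and $(i',j')$ is already a Type~$1$ obstruction (the two boxes are strictly southeast of $(r,s)$ and have distinct rows and columns), so $w$ contains one of the patterns by Lemma~\ref{lem:obstruction1}, and one may thereafter assume no such $(r,s)$ exists. That assumption is what makes the remaining case analysis manageable: it pins the dominant part away from the northwest corner of the picture and lets the paper normalize the $\bullet$s in row $1$ and column $1$ relative to $j'$ and $i'$. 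Your proposal instead tries to run a direct enumeration from scratch, and the ``rank dichotomy'' ($\rank_w(i',j') = \rank_w(i,j)$ versus $>$) you substitute in is not the organizing principle the paper uses and does not obviously control the location of $\Dom(w)$; without the Type~$1$ reduction you would still have to handle configurations where a large dominant staircase sits northwest of $(i,j)$, which is precisely what blows up the case count.

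Beyond that, the proposal is a sketch rather than a proof: you never actually carry out the branching, you say only that you ``would verify'' each leaf lands in the list of eight. In a lemma whose entire content is the exhaustive case check, that step cannot be deferred. Your observations about which $\bullet$s are forced, about $4261735$ arising in the generic branch, and about the transpose symmetry swapping $214635 \leftrightarrow 215364$ and $241635 \leftrightarrow 315264$ are all correct and appear (implicitly or explicitly) in the paper, so the ingredients are right; what's missing is the reduction via Lemma~\ref{lem:obstruction1} that makes the enumeration tractable, and the enumeration itself.
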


\begin{proof}
Fix a permutation $w \in S_n$ that has an obstruction of Type $3$.  We fix $(i,j)$, $(i',j')$ as in the definition of an obstruction of Type $3$, and consider the visualization of the Rothe diagram of $w$.  If there is some $(r,s) \in \Dom(w) \cap \Ess(w)$ strictly northwest of $(i,j)$, then $w$ has an obstruction of Type $1$,  and so it follows from Lemma \ref{lem:obstruction1} that $w$ contains $21543$, $215634$, $215364$, $214635$, or $13254$.  Hence, we assume no such $(r,s)$ exists.  Without loss of generality, we assume that the $\bullet$ in row $1$ is west of column $j'$ and that the $\bullet$ in column $1$ is north of row $i'$.  

First suppose that the $\bullet$ in row $1$ is west of column $j$.  Then the assumption that there is no $(r,s) \in \Dom(w) \cap \Ess(w)$ strictly northwest of $(i,j)$ implies that the $\bullet$ in column $1$ is south of row $i$.  Because $(i,j) \in \Ess(w)$, there must be a $\bullet$ in column $j+1$ weakly north of row $i$.  If the $\bullet$ in column $j$ is north of row $i'$, then the $\bullet$s in row $1$ and columns $j$ and $j+1$ combine with the $\bullet$s in row $i'$ and column $j'$ to form $13254$.  If the $\bullet$ in column $j$ is south of the $\bullet$ in column $j'$, then they combine with the $\bullet$s in row $1$, column $1$, and row $i'$ to form $21543$.  And if the $\bullet$ in column $j$ is north of the $\bullet$ in column $j'$ but south of row $i'$, then all $\bullet$s described in this paragraph form $241635$. 

Alternatively, assume that the $\bullet$ in row $1$ is between columns $j$ and $j'$.  If that the $\bullet$ in column $1$ is north of row $i$, then taking transposes in the argument in the previous paragraph shows that $w$ must contain $13254$, $21543$, or $315264$.  

If the $\bullet$ in column $1$ is south of row $i$, label the $\bullet$ in row $1$ with $(a, w_a)$, any fixed $\bullet$ northwest of $(i,j)$ with $(b, w_b)$, and the $\bullet$ in column $1$ with $(c,w_c)$.  We know that there is some $\bullet$ northwest of $(i,j)$ because $(i,j) \notin \Dom(w)$.  As before, if the $\bullet$ in row $i$ is west of column $j'$ and the $\bullet$ in column $j$ is north of row $i'$, then $w$ contains $13254$.  

Suppose that the $\bullet$ in row $i$ is east of column $j'$, and label that $\bullet$ with $(d,w_d)$.  Label the $\bullet$ in row $i'$ with $(e,w_e)$, and the $\bullet$ in column $j'$ with $(f,w_f)$.  If $w_e<w_d$, then $(a,w_a)$, $(b,w_b)$, $(d,w_d)$, $(e,w_e)$, and $(f,w_f)$ form $21543$.  If $w_e>w_d$, then we consider the placement of the $\bullet$ in column $j$, which we label $(g,w_g)$.  If $g<e$, then $(a,w_a)$, $(b,w_b)$, $(d,w_d)$, $(e,w_e)$, $(f,w_f)$, and $(g,w_g)$ form $315264$.  If $e>g>f$, then all $\bullet$s $(a,w_a)$ to $(g,w_g)$, form $4261735$.  And if $f<g$, then $(b,w_b)$, $(c,w_c)$, $(e,w_e)$, $(f,w_f)$, and $(g,w_g)$ form $21543$.  

Finally, the cases in which $w_d<w_f$ (equivalently, the $\bullet$ in row $i$ west of column $j'$) and $g<e$ are achieved by taking the transpose of the configurations in the preceding paragraph.  In these cases, $w$ contains $21543$, $241635$, or $4261735$.
\end{proof}

\subsection{Permutations avoiding the specified patterns are CDG} \label{subsect:backward} The remainder of this section is devoted to the backward direction of Conjecture \ref{conj:mainresult}.  We will build up to a use of \cite[Corollary 4.13]{KR}.  We begin with some notation.

If $I_w$ is the Schubert determinantal ideal of the permutation $w \in S_n$, we will use $Z_w$ to denote the matrix obtained from an $n \times n$ matrix of indeterminates by setting $z_{i,j}$ to $0$ whenever $(i,j) \in \Dom(w)$.   If $(i,j)$ is a lower outside corner of $D_w$ and $y = z_{i,j}$, we write the CDG generators of $I_w$ as $\{yq_1+r_1, \ldots, yq_k+r_k, h_1, \ldots, h_\ell\}$ where $y$ does not divide any term of any $q_i$, $r_i$ or $h_j$.  Define $N_{y,I_w} = (h_1, \ldots, h_\ell)$ and $C_{y,I_w} = (q_1, \ldots, q_k, h_1, \ldots, h_\ell)$.  This notation mimics that in \cite{KR}.  When the CDG generators are a Gr\"obner basis of $I_w$, $C_{y,I_w}$ will be the ideal corresponding to the star and $N_{y,I_w}+(y)$ the ideal corresponding to the deletion in a geometric vertex decomposition in the sense of \cite{KMY09}.

We will call $\{q_1, \ldots, q_k, h_1, \ldots, h_\ell\}$ the \emph{CDG generators} of $C_{y,I_w}$, which is itself not typically a Schubert determinantal ideal.  With notation as above, we begin by showing that $N_{y,I_w}$ is the Schubert determinantal ideal of a permutation whose Coxeter length is smaller than that of $w$, which will be an essential component of an inductive argument.

Let $t_{a,b}$ denote the transposition $(ab) \in S_n$.

\begin{lemma}\label{lem:SchubertDel}
Suppose that $I_w$ is the Schubert determinantal ideal of the permutation $w \in S_n$ and that $(i,j)$ is a lower outside corner of $D_w$ corresponding to the variable $y = z_{i,j}$.  The ideal $N_{y,I_w}$ is the Schubert determinantal ideal of a permutation $w' \in S_n$ whose Coxeter length is strictly smaller than that of $w$.  Specifically, $w' = wt_{i,w^{-1}(j)}$, and $D_w = D_{w'} \sqcup \{(i,j)\}$.  
\end{lemma}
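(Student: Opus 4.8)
The plan is to produce $w'$ explicitly as the permutation obtained from $w$ by erasing the box $(i,j)$, to check that $D_{w'}=D_w\setminus\{(i,j)\}$ so that $D_{w'}\subsetneq D_w$ and $\ell(w')=|D_{w'}|<|D_w|=\ell(w)$ are immediate, and then to verify $N_{y,I_w}=I_{w'}$ by comparing generating sets. I first record the local picture: because $(i,j)$ is a lower outside corner, every essential box $(a,b)$ with $a\ge i$ and $b\ge j$ lies in row $i$ or in column $j$, and moreover every such box is forced to lie in the rectangle $R=[i,w^{-1}(j)-1]\times[j,w(i)-1]$, since $(i,b)\in D_w$ forces $b<w(i)$ and $(a,j)\in D_w$ forces $a<w^{-1}(j)$. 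Consequently the CDG generators of $I_w$ having a term divisible by $y=x_{i,j}$ all come from these boxes, while every other CDG generator is one of the $h_m$. I concentrate on the case $(i,j)\notin\Dom(w)$; if $(i,j)\in\Dom(w)$ then $y$ is itself a generator and the geometric vertex decomposition degenerates, a boundary situation treated separately (and not met when one repeatedly peels lower outside corners off a non-dominant permutation).

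Set $w'=w\,t$ with $t$ the transposition interchanging $i$ and $w^{-1}(j)$. The combinatorial heart is to show $t$ is a Bruhat covering transposition, i.e. that there is no index $k$ with $i<k<w^{-1}(j)$ and $j<w(k)<w(i)$. Such a $k$ would yield boxes $(k,j)\in D_w$ and $(i,w(k))\in D_w$; pushing each southeast to an essential box and invoking the lower-outside-corner hypothesis would produce an essential box strictly south of $(i,j)$ in column $j$ and an essential box strictly east of $(i,j)$ in row $i$, and tracing how these two boxes constrain the entries of $w$ near $(i,j)$ contradicts $(i,j)\notin\Dom(w)$. Granting this, $\ell(w')=\ell(w)-1$; comparing the dots of $w$ and of $w'$ gives $D_{w'}=D_w\setminus\{(i,j)\}$, and $\rank_{w'}$ equals $\rank_w$ off $R$ and $\rank_w+1$ on $R$. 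In particular $\Dom(w')=\Dom(w)$ (the rank function only increased, and no dominant cell of $w$ meets $R$ once $(i,j)\notin\Dom(w)$), the essential boxes of $w'$ are those of $w$ other than $(i,j)$, together with at most the two cells $(i-1,j)$ and $(i,j-1)$, and at each essential box weakly southeast of $(i,j)$ the rank bound for $w'$ is exactly one larger than for $w$.

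Finally I check $N_{y,I_w}=I_{w'}$. For $N_{y,I_w}\subseteq I_{w'}$: each dominant variable among the $h_m$ is, by $\Dom(w')=\Dom(w)$, a CDG generator of $I_{w'}$; each remaining $h_m$ is a minor of a submatrix of $X'$ whose rank bound is, by the description of $\rank_{w'}$ on and off $R$, implied by the defining rank conditions of $\mathcal{X}_{w'}$, hence $h_m\in I_{w'}$ since $I_{w'}$ is prime. For $I_{w'}\subseteq N_{y,I_w}$: working modulo the dominant variables of $w$, which all lie in $N_{y,I_w}$, so that $X\equiv X'$, a Fulton generator of $I_{w'}$ at an essential box not weakly southeast of $(i,j)$ is literally a $y$-free CDG generator of $I_w$, while at an essential box in row $i$ or column $j$ it is a minor one size larger than the corresponding CDG generators of $I_w$; Laplace-expanding it along column $j$ writes it as a combination of $(\rank_w(\cdot)+1)$-minors that omit column $j$, and these are $y$-free CDG generators of $I_w$, hence lie in $N_{y,I_w}$. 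The two possible new essential boxes $(i-1,j)$ and $(i,j-1)$ are dispatched the same way, using that $(i,j)\in\Ess(w)\setminus\Dom(w)$ so that the relevant minors of $X'_{[i],[j]}$ really are CDG generators of $I_w$. Combining the two containments with the diagram and length statements finishes the proof. The step I expect to fight hardest for is the Bruhat-cover claim, equivalently that erasing a lower outside corner from a Rothe diagram again gives a Rothe diagram, where one must carefully exclude configurations in which both arms emanating from $(i,j)$ carry essential boxes; the only other delicate point is matching the two generating sets on the nose rather than merely up to radical in the Laplace-expansion step.
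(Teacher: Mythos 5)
Your proposal constructs the same $w' = w\,t_{i,w^{-1}(j)}$ that the paper's proof produces, but the way you argue is genuinely different: you front-load the claim that $t_{i,w^{-1}(j)}$ is a Bruhat covering transposition (so that $D_{w'}=D_w\setminus\{(i,j)\}$ and then $\ell$, $\subsetneq$ come for free) and then match generating sets by Laplace expansion plus primality, whereas the paper instead argues directly that every $y$-free CDG generator arising from the rank condition at $(i,j)$ is redundant, by using the Young-diagram shape of $\Dom(w)$ to factor such a determinant into a product of two smaller minors, one of which is already in $N_{y,I_w}$, and only then records $w'$.

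The step you flag as the hardest, however, is a genuine gap, and the fix you sketch does not work. The claim that the lower-outside-corner hypothesis together with $(i,j)\notin\Dom(w)$ forces $t_{i,w^{-1}(j)}$ to be a Bruhat cover is false. Take $w=146253$. Then $D_w=\{(2,2),(2,3),(3,2),(3,3),(3,5),(5,3)\}$, $\Ess(w)=\{(3,3),(3,5),(5,3)\}$, and $(3,3)$ is a lower outside corner (neither $(3,5)$ nor $(5,3)$ is strictly southeast of it); since $w(1)=1$ we have $\Dom(w)=\emptyset$, so certainly $(3,3)\notin\Dom(w)$. Yet $k=5$ satisfies $3<5<w^{-1}(3)=6$ and $3<w(5)=5<w(3)=6$, so $t_{3,6}$ is not a cover: $w\,t_{3,6}=143256$ has length $3$, not $5$, and $D_w\setminus\{(3,3)\}$ is not a Rothe diagram (it would require $w'^{-1}(3)\le 3$ from the absence of $(3,3)$ and also $w'^{-1}(3)>5$ from $(5,3)$, a contradiction). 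This is exactly the configuration you mention needing to exclude --- essential boxes on both arms emanating from $(i,j)$ --- and it is not excluded by $(i,j)\notin\Dom(w)$; in this configuration the conclusion of the lemma itself fails for that corner, since for instance the $y$-free $2$-minor $x_{1,1}x_{2,2}-x_{1,2}x_{2,1}$ lies in $N_{y,I_w}$ but not in the ideal generated by the degree-$3$ CDG generators coming from $(3,5)$ and $(5,3)$. So an additional restriction on the choice of lower outside corner is unavoidable. For what it is worth, the paper's proof does not explicitly dispose of this configuration either (its factorization argument treats only the minors at $(i,j)$ that use row $i$ and column $j$); the paper's downstream applications simply never invoke the lemma at such a corner --- the corner chosen in Lemma~\ref{lem:GrobnerQs} in case one is a maximal-minors corner, and in case two the hypothesis that no Type~3 obstruction exists prevents essential boxes on both arms. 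You should either import that restriction into your statement of the cover claim and prove it there, or argue as the paper does directly on the generators rather than via the cover.
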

\begin{proof}
We claim that whenever there is some $(\rank_w(i,j)+1)$-minor with $yq+r = r$, that $r \in N_{y,I_w}$, i.e., that all of the CDG generators of $I_w$ determined only by the rank condition at $(i,j)$ involve $y$.  Fix a $(\rank_w(i,j)+1) \times (\rank_w(i,j)+1)$ submatrix $M$ of $Z_w$ so that $\det(M) = yq+r$, and suppose that $q = 0$.  Recall that the entry in row $a$ and column $b$ of $M$ is $0$ if and only if $(a,b) \in \Dom(w)$.  Then $q$ is the determinant of a $\rank_w(i,j) \times \rank_w(i,j)$ submatrix of $M$ whose anti-diagonal has an entry in row $a$ and column $b$ for some $(a,b) \in \Dom(w)$.  Assume without loss of generality that $i \geq j$.  

If $r \neq 0$, then there is some $(i',j)$ with nonvanishing $z_{i',j} \cdot q'$  summand of $r$ so that $q'$ is the determinant of a $\rank_w(i,j) \times \rank_w(i,j)$ submatrix of $M$ without a $0$ along its anti-diagonal.  Because $\Dom(w)$ forms a partition shape, if $z_{i',j}\cdot q' \neq 0$, then $z_{i'',j} \cdot q'' \neq 0$ whenever $i''<i'$ and $q''$ is the cofactor corresponding to $z_{i'',j}$ in an expansion of $yq+r$ along column $j$.  In particular, there is a unique $t$ so that no $\rank_w(i,j) \times \rank_w(i,j)$ submatrix of $M$ that excludes column $j$ and involves the final $t$ rows has a $0$ along its anti-diagonal and every $\rank_w(i,j) \times \rank_w(i,j)$ submatrix of $M$ that excludes column $j$ and also excludes one of the final $t$ rows has a $0$ along its anti-diagonal.  

The same argument can be applied to columns, and, because vanishing is determined by $0$'s along the anti-diagonal, will select the final $\rank_w(i.j)+1-t$ columns.  Hence, we may write  $r$ as the product of one $t$-minor determined by the final $t$ rows and initial $t$ columns of $M$ and one $(\rank_w(i.j)+1-t)$-minor consisting of the initial $\rank_w(i.j)+1-t$ rows and final $\rank_w(i.j)+1-t$ columns of $M$.  

Call the southeast corner of the lower block $z$ and the southeast corner of the upper block $z'$.  If there are fewer than $t$ dots northwest of $z$, then the $t$-minor that is one factor of $r$ is an element of $N_{y,I_w}$, and so $r \in  N_{y,I_w}$.  If there are $t$ or more dots northwest of $z$, then there are at most $\rank_w(i.j)-t$ dots northwest of $z'$, and so the factor of $r$ corresponding to the upper block is an element of $N_{y,I_w}$, and so $r \in N_{y,I_w}$.  Hence, $N_{y,I_w}$ is generated by the CDG generators of $I_w$ determined by the diagram boxes other than $(i,j)$.  The permutation with exactly those diagram boxes and rank conditions at those diagram boxes is $w' = wt_{i,w^{-1}(j)}$.  Then $N_{y,I_w} = I_{w'}$, the Coxeter lengh of $w'$ is one less than the Coxeter length of $w$, and $D_w = D_{w'} \sqcup \{(i,j)\}$.  
\end{proof}

\begin{example}
With notation as in Lemma \ref{lem:SchubertDel}, let $w = 215634$ and $(i,j) = (4,4)$.  If $t_{i,w^{-1}(j)} = t_{4,6}$, set $w' = wt_{4,6} = 215436$.  The Rothe diagrams of $w$ and $w'$ appear below.  We may view the Rothe diagram of $w'$ as arising from the Rothe diagram of $w$ by swapping rows $i=4$ and $w^{-1}(j)=6$.  This exchange eliminates the diagram box in position $(i,j) = (4,4)$ and leaves the rest of the diagram boxes undisturbed.

\[
 \begin{minipage}{.4\textwidth}
\hspace{0.5cm} 
  \begin{tikzpicture}[x=1.5em,y=1.5em]
      \draw[color=black, thick](0,1)rectangle(6,7);
           \node at (3,7.5) {$w = 215634$};
     \draw[color=black](0,6)rectangle(1,7);
      \draw[color=black](1,6)rectangle(2,7);
      \draw[color=black](1,4)rectangle(2,5);
      \draw[color=black](3,4)rectangle(4,5);
       \draw[color=black](1,3)rectangle(2,4);
      \draw[color=black](3,3)rectangle(4,4);
      \draw[color=black](1,2)rectangle(2,3);      
     \draw[thick,color=red] (6,6.5)--(1.5,6.5)--(1.5,1);
     \draw[thick,color=red] (6,5.5)--(0.5,5.5)--(0.5,1);
     \draw[thick,color=red] (6,4.5)--(4.5,4.5)--(4.5,1);
     \draw[thick,color=red] (6,3.5)--(5.5,3.5)--(5.5,1);
     \draw[thick,color=red] (6,2.5)--(2.5,2.5)--(2.5,1);
     \draw[thick,color=red] (6,1.5)--(3.5,1.5)--(3.5,1);
     \filldraw [black](1.5,6.5)circle(.1);
     \filldraw [black](0.5,5.5)circle(.1);
      \filldraw [black](4.5,4.5)circle(.1);
     \filldraw [black](5.5,3.5)circle(.1);
     \filldraw [black](2.5,2.5)circle(.1);
      \filldraw [black](3.5,1.5)circle(.1);
      \draw[color=black](2,6)rectangle(3,7);
      \draw[color=black](3,6)rectangle(4,7); 
      \draw[color=black](4,6)rectangle(5,7);
      \draw[color=black](5,6)rectangle(6,7); 
      \draw[color=black](0,5)rectangle(1,6);
      \draw[color=black](1,5)rectangle(2,6);  
      \draw[color=black](2,5)rectangle(3,6);
      \draw[color=black](3,5)rectangle(4,6); 
      \draw[color=black](4,5)rectangle(5,6);
      \draw[color=black](5,5)rectangle(6,6); 
     \draw[color=black](0,4)rectangle(1,5); 
      \draw[color=black](2,4)rectangle(3,5);
      \draw[color=black](4,4)rectangle(5,5);
      \draw[color=black](5,4)rectangle(6,5); 
      \draw[color=black](0,3)rectangle(1,4); 
      \draw[color=black](2,3)rectangle(3,4);
      \draw[color=black](4,3)rectangle(5,4);
      \draw[color=black](5,3)rectangle(6,4); 
      \draw[color=black](0,2)rectangle(1,3); 
      \draw[color=black](2,2)rectangle(3,3);
      \draw[color=black](3,2)rectangle(4,3);
      \draw[color=black](4,2)rectangle(5,3);
      \draw[color=black](5,2)rectangle(6,3); 
      \draw[color=black](1,1)rectangle(2,2);  
      \draw[color=black](2,1)rectangle(3,2);
      \draw[color=black](3,1)rectangle(4,2); 
      \draw[color=black](4,1)rectangle(5,2);
      \draw[color=black](5,1)rectangle(6,2); 
     \end{tikzpicture}
  \end{minipage}    \begin{minipage}{.4\textwidth}
\hspace{1cm} 
  \begin{tikzpicture}[x=1.5em,y=1.5em]
      \draw[color=black, thick](0,1)rectangle(6,7);
           \node at (3,7.5) {$w' =215436 $};
     \draw[color=black](0,6)rectangle(1,7);
      \draw[color=black](1,6)rectangle(2,7);
      \draw[color=black](1,4)rectangle(2,5);
      \draw[color=black](3,4)rectangle(4,5);
       \draw[color=black](1,3)rectangle(2,4);
      \draw[color=black](3,3)rectangle(4,4);
      \draw[color=black](1,2)rectangle(2,3);      
     \draw[thick,color=red] (6,6.5)--(1.5,6.5)--(1.5,1);
     \draw[thick,color=red] (6,5.5)--(0.5,5.5)--(0.5,1);
     \draw[thick,color=red] (6,4.5)--(4.5,4.5)--(4.5,1);
     \draw[thick,color=red] (6,3.5)--(3.5,3.5)--(3.5,1);
     \draw[thick,color=red] (6,2.5)--(2.5,2.5)--(2.5,1);
     \draw[thick,color=red] (6,1.5)--(5.5,1.5)--(5.5,1);
     \filldraw [black](1.5,6.5)circle(.1);
     \filldraw [black](0.5,5.5)circle(.1);
      \filldraw [black](4.5,4.5)circle(.1);
     \filldraw [black](3.5,3.5)circle(.1);
     \filldraw [black](2.5,2.5)circle(.1);
      \filldraw [black](5.5,1.5)circle(.1);
      \draw[color=black](2,6)rectangle(3,7);
      \draw[color=black](3,6)rectangle(4,7); 
      \draw[color=black](4,6)rectangle(5,7);
      \draw[color=black](5,6)rectangle(6,7); 
      \draw[color=black](0,5)rectangle(1,6);
      \draw[color=black](1,5)rectangle(2,6);  
      \draw[color=black](2,5)rectangle(3,6);
      \draw[color=black](3,5)rectangle(4,6); 
      \draw[color=black](4,5)rectangle(5,6);
      \draw[color=black](5,5)rectangle(6,6); 
     \draw[color=black](0,4)rectangle(1,5); 
      \draw[color=black](2,4)rectangle(3,5);
      \draw[color=black](4,4)rectangle(5,5);
      \draw[color=black](5,4)rectangle(6,5); 
      \draw[color=black](0,3)rectangle(1,4); 
      \draw[color=black](2,3)rectangle(3,4);
      \draw[color=black](4,3)rectangle(5,4);
      \draw[color=black](5,3)rectangle(6,4); 
      \draw[color=black](0,2)rectangle(1,3); 
      \draw[color=black](2,2)rectangle(3,3);
      \draw[color=black](3,2)rectangle(4,3);
      \draw[color=black](4,2)rectangle(5,3);
      \draw[color=black](5,2)rectangle(6,3); 
      \draw[color=black](1,1)rectangle(2,2);  
      \draw[color=black](2,1)rectangle(3,2);
      \draw[color=black](3,1)rectangle(4,2); 
      \draw[color=black](4,1)rectangle(5,2);
      \draw[color=black](5,1)rectangle(6,2); 
     \end{tikzpicture}.
  \end{minipage}   
       \]
\end{example}

\begin{remark}\label{rem:newEss}
With notation as in Lemma \ref{lem:SchubertDel}, it is possible that $\Ess(w') \not\subseteq \Ess(w)$, as is the case if $w = 215634$ and $(i,j) = (4,4)$.  In that case, $(3,4), (4,3) \in \Ess(w') \setminus \Ess(w)$.  

In general, the only possible elements of $\Ess(w') \setminus \Ess(w)$ are $(i-1,j)$ and $(i,j-1)$.  Indeed, suppose $(a,b) \in \Ess(w') \setminus \Ess(w)$. By the definition of $\Ess(w')$, $(a,b) \in D_{w'}$ and $(a+1,b), (a,b+1) \notin D_{w'}$.  Because $D_{w'} \subset D_w$, the assumption $(a,b) \notin \Ess(w)$ implies $(a+1,b) \in \Ess(w)$ or $(a,b+1) \in \Ess(w)$.  The fact that $D_w \setminus D_{w'} = \{(i,j)\}$ then implies that $(a+1,b) = (i,j)$ or $(a,b+1) = (i,j)$.
\end{remark}

\begin{corollary}\label{cor:GrobnerDel}
If $w$ has no obstruction of Type $1$, $2$, or $3$, $(i,j)$ is a lower outside corner of $D_w$ corresponding to the variable $y = z_{i,j}$, and $I_{w'} =N_{y,I_w}$, then $w'$ has no obstruction of Type $1$, $2$, or $3$.  
\end{corollary}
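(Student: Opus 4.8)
The plan is to argue the contrapositive: assuming $w'$ has an obstruction of Type $1$, $2$, or $3$, I will produce an obstruction of $w$ (of a possibly different type).

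First I would assemble the structural facts I need. By Lemma~\ref{lem:SchubertDel}, $D_{w'} = D_w \setminus \{(i,j)\}$, and since its proof realizes $w'$ as the transposition with $w'_i = j$ and $w'_{w^{-1}(j)} = w_i$, the box $(i,j)$ is the southeastmost box of $D_w$ in both its row and its column (no box of $D_w$ lies strictly east of $(i,j)$ in row $i$ or strictly south of $(i,j)$ in column $j$). By Remark~\ref{rem:newEss} together with the immediate observation that every essential box of $w$ other than $(i,j)$ remains essential in $w'$, we have $\Ess(w) \setminus \Ess(w') = \{(i,j)\}$ and $\Ess(w') \setminus \Ess(w) \subseteq \{(i-1,j), (i,j-1)\}$. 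Comparing rank functions ($\rank_{w'} \geq \rank_w$ everywhere, with equality off a region that, intersected with $D_w$, is contained in $\{(i,j)\}$ because $(i,j)$ is a corner of the Young diagram $\Dom(w)$ when it lies there) gives $\Dom(w') \subseteq \Dom(w)$ with $\Dom(w) \setminus \Dom(w') \subseteq \{(i,j)\}$, hence $\max_k\{(k,b)\in\Dom(w)\} = \max_k\{(k,b)\in\Dom(w')\}$ for every column $b \neq j$ (and symmetrically for row depths in rows $\neq i$). I will also use the standard facts that $\Dom(w)$ is a Young-diagram shape, so each of its columns is an interval, and that an essential box lying in $\Dom(w)$ is a corner of $\Dom(w)$.

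With these in hand, I would fix an obstruction of $w'$ and split on type. Since $D_{w'} \subseteq D_w$ and the dominant depths agree away from column $j$ (resp.\ row $i$), any obstruction of $w'$ whose distinguished boxes are all essential, resp.\ dominant, for $w$ transfers verbatim to an obstruction of $w$ of the same type. So the content is the case where one of the distinguished boxes is a new box $(i-1,j)$ or $(i,j-1)$. The key point is that $(i,j)\in\Ess(w)$ can be substituted for whichever new box occurs: if a new box lies in $\Dom(w')\cap\Ess(w')$ then $\rank_w$ vanishes there, which forces $(i,j)\in\Dom(w)$; if instead a new box lies in $\Ess(w')\setminus\Dom(w')$ then the column-interval structure of $\Dom(w)$ forces $(i,j)\notin\Dom(w)$, so $(i,j)\in\Ess(w)\setminus\Dom(w)$. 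Either way $(i,j)$ is one step south or east of the new box, so it remains (strictly) southeast of anything the new box was southeast of; and when the new box was itself playing the ``southeast'' role, the box southeast of it could only sit weakly past $(i,j)$, so by the southeastmost-in-its-row-and-column property of $(i,j)$ it is in fact strictly southeast of $(i,j)$. Running this substitution through the subcases yields an obstruction of $w$: in most cases of the same type, except that a Type~$2$ obstruction of $w'$ one of whose two southeast essential boxes is a new box becomes a Type~$1$ obstruction of $w$ (with $(i,j)$ and the companion essential box as the two boxes of differing row and column).

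I expect the main obstacle to be the bookkeeping in Types~$2$ and~$3$. One first checks that at most one distinguished box of a single obstruction of $w'$ can be a new box --- this holds because $(i-1,j)$ and $(i,j-1)$ share neither a row nor a column and neither is strictly southeast of the other --- and then one must track carefully the dominant-depth equalities in Type~$2$ as they interact with column $j$ (the unique column at which the depths of $\Dom(w)$ and $\Dom(w')$ can differ) and with the rows beyond $i$. One also needs the small observation that a corner of $\Dom(w')$ cannot lie strictly southeast of another corner of $\Dom(w')$, which is what rules out the spurious possibility that a new box playing the ``southeast'' role lies in $\Dom(w')$ while admitting a dominant essential box strictly to its northwest. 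Once the ``at most one new box'' reduction is set up, each subcase is a short verification from the facts collected above.
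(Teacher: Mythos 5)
Your proposal follows the same route as the paper's (very terse) argument: pass an obstruction of $w'$ to an obstruction of $w$, using $D_{w'}\subsetneq D_w$, Remark~\ref{rem:newEss}, and the behavior of $\Dom$ under the deletion. You supply a fuller derivation of the structural inputs, and the justification of the ``southeastmost-in-its-row-and-column'' property is the right one: it does not follow from the definition of lower outside corner alone (there can be other boxes of $D_w$ due east or due south), but it does follow from the stated conclusion $D_{w'}=D_w\setminus\{(i,j)\}$ together with $w'$ being the transposition produced in Lemma~\ref{lem:SchubertDel}, which you cite. The reduction to ``at most one new box per obstruction'' and the substitution of $(i,j)$ for the new box are also correct.

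The one place the outline is not quite airtight is the ``transfers verbatim'' claim for Type~$2$ obstructions with \emph{no} new box. The Type~$2$ hypothesis is an equality of dominant depths of two entire columns (or rows), not a condition on the distinguished boxes alone, and that equality can fail to persist from $w'$ to $w$ precisely when one of the two essential boxes sits in column $j$ (or row $i$) and $(i,j)\in\Dom(w)$: then the dominant depth of column $j$ is $i$ for $w$ but at most $i-1$ for $w'$, so two depths equal for $w'$ need not be equal for $w$. You flag the ``interaction with column $j$'' but resolve the escape to Type~$1$ only when a new box is present. The same escape is needed here: if $(r,s)\in\Dom(w')\cap\Ess(w')$ and $(a,j),(a,b')\in\Ess(w')$ lie strictly southeast of $(r,s)$ with $a<i$, then $(r,s)$, $(i,j)$, and $(a,b')$ already give a Type~$1$ obstruction of $w$ (distinct rows and columns, both strictly southeast of $(r,s)$), and the depth equality is irrelevant. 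With that subcase made explicit, the proposal is a correct, more detailed version of the paper's proof.
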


\begin{proof}
If $(i,j) \in \Dom(w)$, then, because there are no diagram boxes southeast of $(i,j)$ by assumption, $(i,j)$ cannot be one of the boxes involved in any of the obstructions.  In that case, any set of diagram boxes realizing an obstruction in $w'$ would also realize an obstruction in $w$.  Hence we assume that $(i,j) \notin \Dom(w)$, in which case $\Dom(w) = \Dom(w')$.

Suppose that $w'$ has an obstruction of Type $1$ with $(r,s) \in \Dom(w') \cap \Ess(w')$ and $(a,b)$, $(a'b') \in D_{w'}$ strictly southeast of $(r,s)$ with $a' \neq a$ and $b' \neq b$.  Because $\Dom(w') = \Dom(w)$ and $D_{w'} \subset D_w$, the diagram boxes $(r,s)$, $(a,b)$, $(a',b')$ also constitute an obstruction of Type $1$ in $w$ as well.

Next suppose that $w'$ has an obstruction of Type $2$.  Assume without loss of generality that the Type $2$ obstruction has the form  $(r,s) \in \Dom(w') \cap \Ess(w')$ and $(a,b), (a,b') \in \Ess(w')$ strictly southeast of $(r,s)$ with $\max_k \{(k,b) \in \Dom(w')\} = \max_k \{(k,b') \in \Dom(w') \}$ and $b<b'$.  In this case neither $(a,b)$ nor $(r,s)$ may be a lower outside corner of $w'$.   In particular, neither $(r,s)$ nor $(a,b)$ is equal to $(i,j)$, and so $(r,s) \in \Dom(w) \cap \Ess(w)$ and $(a,b) \in \Ess(w)$.  If $(a,b') \in \Ess(w)$ also, then $(r,s)$, $(a,b)$, and $(a,b')$ also constitute a Type $2$ obstruction in $w$.  If $(a,b') \notin \Ess(w)$, then either $(a+1,b') = (i,j) \in \Ess_w$, in which case $(r,s)$, $(a,b)$, and $(a+1,b')$ together constitute a Type $1$ obstruction, or $(a,b'+1) = (i,j) \in \Ess(w)$.  

If $(a,b'+1) = (i,j) \in \Ess(w)$, set $m = \max_k \{(k,b) \in \Dom(w')\} = \max_k \{(k,b') \in \Dom(w')\}$.  Because $\Dom(w) = \Dom(w')$, we have $m = \max_k \{(k,b) \in \Dom(w)\}$.  We claim that $m = \max_k \{(k,b'+1) \in \Dom(w)\}$.   Because $b'+1>b'$ and $\Dom(w)$ forms a partition shape, $m \geq \max_k \{(k,b'+1) \in \Dom(w)\}$.  If $m > \max_k \{(k,b'+1) \in \Dom(w)\}$, then $(m,b'+1) \notin \Dom(w)$.  But $(m,b') \in \Dom(w') = \Dom(w)$.  Therefore, if $(m,b'+1) \notin \Dom(w)$, the visualization of $D_w$ must have a $\bullet$ in column $b'+1$ weakly north of row $m$.  Clearly $m<a$ since $(a,b) \notin \Dom(w')$.  But the $\bullet$ in column $b'+1 = j$ must be south of row $a = i$ because $(i,j) \in D_w$. Hence, $m = \max_k \{(k,b'+1) \in \Dom(w)\}$ and $(r,s)$, $(a,b)$, and $(a, b'+1)$ together constitute a Type $3$ obstruction in $w$. 

Finally suppose that $w'$ has an obstruction of Types $3$.  Suppose that $(a,b), (a',b') \in \Ess(w') \setminus \Dom(w')$ with $(a',b')$ strictly southeast of $(a,b)$.  If $(a',b') \in \Ess(w)$, then $(a,b)$ and $(a',b')$ constitute a Type $3$ obstruction in $w$ also.  Otherwise, it must be that $(i,j) = (a'+1,b')$ or $(i,j) = (a',b+1)$.  In either case, $(a,b)$ and $(i,j)$ constitute a Type $3$ obstruction in $w$.
\end{proof}

Next, we will show that the CDG generators of $C_{y,I_w}$ form a Gr\"obner basis whenever $w$ has no obstruction of Type $1$, $2$, or $3$.  Before proceeding, we review some standard notation and make one new definition to help with bookkeeping during this subsection.  We will use $\deg(f)$ to denote the degree of the homogeneous polynomial $f$ and $LCM(\mu_1,\mu_2)$ to denote the least common multiple of two monomials (which will arise for us as the monic leading terms of ideal generators).  

\begin{definition}\label{def:Qdef}
Fix a permutation $w \in S_n$, lower outside corner $(i,j)$ of $D_w$ corresponding to the variable $y = z_{i,j}$ in $Z_w$, and CDG generators $\{yq_1+r_1, \ldots, yq_k+r_k, h_1, \ldots, h_\ell \}$ of $I_w$, where $y$ does not divide any $q_a$, $r_a$, or $h_b$.  Assume also that there is some $0 \leq \ell' \leq \ell$ so that all variables appearing in $h_b$ are northwest of some $z_{i',j}$ for $(i',j) \in \Ess(w)$ with $\rank_w(i',j) +1= \deg(h_b)$ or of some $z_{i,j'}$ for $(i,j') \in \Ess(w)$ with $\rank_w(i,j') +1= \deg(h_b)$ if and only if $b \leq \ell'$.  If $(1,j) \in \Dom(w)$, set $m_1 = \min\{i-p \mid (p,j) \in \Dom(w)\}$, and set $m_1 = i$ if $(1,j) \notin \Dom(w)$.  Similarly, if $(i,1) \in \Dom(w)$, set $m_2 = \min\{j-q \mid (i,q) \in \Dom(w)\}$, and set $m_2 = j$ if $(1,j) \notin \Dom(w)$. 

We form the ideal \[ Q_{y,I_w} = \begin{cases} 
      (q_1, \ldots, q_k) & \rank_w(i,j)+1 = \min\{m_1,m_2\} \\
     (q_1, \ldots, q_k, h_1, \ldots, h_{\ell'}) & \mbox{otherwise}. \\
   \end{cases}
\]
\end{definition}

Less formally, we are taking $Q_{y,I_w} = (q_1, \ldots, q_k)$ when the rank condition on $(i,j)$ is determining maximal minors in the submatrix of $Z_w$ obtained from the submatrix northwest of $z_{i,j}$ by deleting any full rows or columns of $0$'s.  Otherwise, we include also as generators of $Q_{y,I_w}$ the CDG generators determined by essential boxes in the same row or column as $z_{i,j}$.  

\begin{example}
Let $w = 351624$ and $y = z_{4,4}$.  The Rothe diagram of $w$ is \[
  \begin{tikzpicture}[x=1.5em,y=1.5em]
      \draw[color=black, thick](0,1)rectangle(6,7);
      \draw[color=black](1,4)rectangle(2,5);
      \draw[color=black](3,4)rectangle(4,5);
       \draw[color=black](1,3)rectangle(2,4);
      \draw[color=black](3,3)rectangle(4,4);
      \draw[color=black](1,2)rectangle(2,3);
      \draw[color=black](0,1)rectangle(1,2);  
     \draw[thick,color=red] (6,6.5)--(2.5,6.5)--(2.5,1);      
     \draw[thick,color=red] (6,5.5)--(4.5,5.5)--(4.5,1);
     \draw[thick,color=red] (6,4.5)--(0.5,4.5)--(0.5,1);
     \draw[thick,color=red] (6,3.5)--(5.5,3.5)--(5.5,1);
     \draw[thick,color=red] (6,2.5)--(1.5,2.5)--(1.5,1);
     \draw[thick,color=red] (6,1.5)--(3.5,1.5)--(3.5,1);
     \filldraw [black](0.5,4.5)circle(.1);
      \filldraw [black](5.5,3.5)circle(.1);
     \filldraw [black](4.5,5.5)circle(.1);
     \filldraw [black](3.5,1.5)circle(.1);
      \filldraw [black](1.5,2.5)circle(.1);
      \filldraw [black](2.5,6.5)circle(.1);
      \draw[color=black](0,6)rectangle(1,7);
      \draw[color=black](1,6)rectangle(2,7);  
      \draw[color=black](2,6)rectangle(3,7);
      \draw[color=black](3,6)rectangle(4,7); 
      \draw[color=black](4,6)rectangle(5,7);
      \draw[color=black](5,6)rectangle(6,7);
      \draw[color=black](0,5)rectangle(1,6);
      \draw[color=black](1,5)rectangle(2,6);  
      \draw[color=black](2,5)rectangle(3,6);
      \draw[color=black](3,5)rectangle(4,6); 
      \draw[color=black](4,5)rectangle(5,6);
      \draw[color=black](5,5)rectangle(6,6);
     \draw[color=black](0,4)rectangle(1,5); 
      \draw[color=black](2,4)rectangle(3,5);
      \draw[color=black](4,4)rectangle(5,5);
      \draw[color=black](5,4)rectangle(6,5);
      \draw[color=black](0,3)rectangle(1,4); 
      \draw[color=black](2,3)rectangle(3,4);
      \draw[color=black](4,3)rectangle(5,4);
      \draw[color=black](5,3)rectangle(6,4);
      \draw[color=black](0,2)rectangle(1,3); 
      \draw[color=black](2,2)rectangle(3,3);
      \draw[color=black](3,2)rectangle(4,3);
      \draw[color=black](4,2)rectangle(5,3);
      \draw[color=black](5,2)rectangle(6,3);
      \draw[color=black](1,1)rectangle(2,2);  
      \draw[color=black](2,1)rectangle(3,2);
      \draw[color=black](3,1)rectangle(4,2); 
      \draw[color=black](4,1)rectangle(5,2);
      \draw[color=black](5,1)rectangle(6,2);
     \end{tikzpicture}.
\] 
Then \[
I_w = (z_{1,1}, z_{1,2}, z_{2,1}, z_{2,2}, 2\text{-minors in } (Z_w)_{[4],[2]},  2\text{-minors in } (Z_w)_{[2],[4]}, 3\text{-minors in } (Z_w)_{[4],[4]})
\] and  \[
Q_{y,I_w} = (2\text{-minors in } (Z_w)_{[4],[2]},  2\text{-minors in } (Z_w)_{[2],[4]}, 2\text{-minors in } (Z_w)_{[3],[3]}).
\] The $2$-minors in $(Z_w)_{[2],[4]}$, for example, are included as generators of $Q_{y,I_w}$ because \[
\rank_w(4,4)+1 = 3<4 = \min\{4,4\} = \min\{m_1,m_2\}
\] and $(2,4) \in \Ess(w)$ is in the same column as $(4,4)$.  
\end{example}

We make this definition purely for technical convenience below and not out of an independent interest in $\Spec(R/Q_{y,I_w})$.  We will say that $Q_{y,I_w}$ is CDG if the generators given above form a Gr\"obner basis under any diagonal term order.  

\begin{lemma}\label{lem:GrobnerQs}
If $w \in S_n$ has no obstruction of Type $1$, $2$, or $3$ and Conjecture \ref{conj:mainresult} holds for all permutations of smaller Coxeter length than that of $w$, then either $D_w = \Dom(w)$ or there is some lower outside corner $(i,j)$ of $D_w \setminus \Dom(w)$ corresponding to the variable $y$ in $Z_w$ so that $Q_{y,I_w}$ is CDG.
\end{lemma}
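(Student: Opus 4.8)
The plan is to choose a lower outside corner $(i,j)$ of $D_w$ judiciously — in fact, the one that is maximally southeast among the essential boxes, so that no essential box lies strictly southeast of it — and then verify that $Q_{y,I_w}$, as defined in Definition \ref{def:Qdef}, is either (a) the ideal of maximal minors of a generic submatrix (after deleting the zero rows and columns coming from $\Dom(w)$), or (b) reducible to a Schubert determinantal ideal of a permutation of strictly smaller Coxeter length, to which the inductive hypothesis on Conjecture \ref{conj:mainresult} applies. The first thing I would do is make the case split in Definition \ref{def:Qdef} explicit geometrically: when $\rank_w(i,j)+1 = \min\{m_1,m_2\}$, the polynomials $q_1,\dots,q_k$ are exactly the maximal minors of the generic submatrix obtained from $X_w$ restricted to the rows and columns strictly northwest of $(i,j)$ after removing the all-zero rows and columns indexed by $\Dom(w)$; maximal minors of a generic matrix are a universal Gröbner basis (by Bernstein–Sturmfels–Zelevinsky, or Sturmfels' \emph{Gröbner bases and convex polytopes}), so $Q_{y,I_w}$ is CDG in that case with no further work.

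In the remaining case, $Q_{y,I_w} = (q_1,\dots,q_k,h_1,\dots,h_{\ell'})$, and here I would argue as follows. The $h_b$ with $b \le \ell'$ are, by the hypothesis in Definition \ref{def:Qdef}, minors governed by a rank condition at some essential box $(i',j)$ in the same column as $y$ or $(i,j')$ in the same row as $y$; so the generators of $Q_{y,I_w}$ all "live" in the region weakly northwest of row $i$ and column $j$. I would identify $Q_{y,I_w}$ with the CDG generators (in the sense of Lemma \ref{lem:SchubertDel}'s proof) of a Schubert determinantal ideal $I_{w''}$ for an explicit $w'' \in S_n$ obtained from $w$ by a transposition that strictly decreases the Coxeter length — concretely, one removes the rank condition carried by $(i,j)$ alone, analogously to the construction of $w'$ in Lemma \ref{lem:SchubertDel} but keeping the row/column-$j$ conditions, so that the $q_a$ become genuine minors of the reduced generic matrix. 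Then Corollary \ref{cor:GrobnerDel} (applied to the chain $w \rightsquigarrow w' \rightsquigarrow \dots$) together with Lemmas \ref{lem:obstruction1}, \ref{lem:obstruction2}, \ref{lem:obstruction3} guarantees that $w''$ still avoids the eight patterns, hence by the inductive hypothesis $I_{w''}$ has CDG generators forming a diagonal Gröbner basis, which is exactly the assertion that $Q_{y,I_w}$ is CDG.

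The subtle point — and the step I expect to be the main obstacle — is proving that there always exists a lower outside corner $(i,j)$ for which one of these two favorable cases actually occurs; a priori Definition \ref{def:Qdef}'s "otherwise" branch could land us on an ideal that is \emph{not} of the form $I_{w''}$ for a shorter permutation, because the row/column-$j$ conditions $h_1,\dots,h_{\ell'}$ need not be "compatible" with simply deleting the $(i,j)$-condition. This is where the hypothesis that $w$ has no obstruction of Type $1$, $2$, or $3$ does the real work: Type $3$ forbids two essential boxes of $\Ess(w) \setminus \Dom(w)$ in the same column with one strictly southeast of the other, which is precisely what would prevent the column-$j$ conditions from assembling into a single nested family; Type $2$ rules out the "coincident dominant-part depth" configurations that would obstruct the deletion from producing a legitimate Rothe diagram; and Type $1$ handles the interaction with $\Dom(w) \cap \Ess(w)$. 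So the heart of the proof is a careful case analysis of the southeast-most essential box $(i,j)$: show that the absence of Type $2$ and Type $3$ obstructions forces the $h_b$ ($b \le \ell'$) in the same row or column as $(i,j)$ to be totally ordered by inclusion of their generic submatrices, and then check — possibly by choosing instead the lexicographically-last lower outside corner when the first choice fails the $\min\{m_1,m_2\}$ test — that the resulting reduced configuration is the CDG generating set of $I_{w''}$ for the permutation $w''$ got from $w$ by the appropriate length-decreasing transposition. Once that structural claim is in hand, the Gröbner basis conclusion is immediate from the inductive hypothesis (case (b)) or from the universal Gröbner basis property of maximal minors (case (a)).
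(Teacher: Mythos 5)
The first branch of your argument---the case $\rank_w(i,j)+1=\min\{m_1,m_2\}$, where the $q_a$ are the maximal minors of the reduced generic submatrix---matches the paper's proof exactly (the paper cites \cite[Theorem 4.2]{CDG15} or \cite[Proposition 5.4]{Boo11} for the universal Gr\"obner basis fact, but the content is the same). The genuine gap is in your second branch.

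You propose to identify $Q_{y,I_w}=(q_1,\dots,q_k,h_1,\dots,h_{\ell'})$ with the CDG generating set of some Schubert determinantal ideal $I_{w''}$ of smaller Coxeter length, and then appeal to the inductive hypothesis. But the $q_a$ are not $(\rank_w(i,j)+1)$-minors at any essential box; they are the degree-$\rank_w(i,j)$ cofactors of $y$ inside the CDG generators at $(i,j)$, i.e.\ $\rank_w(i,j)$-minors of the submatrix strictly northwest of $(i,j)$. That rank bound need not coincide with the Fulton rank at $(i-1,j-1)$ or anywhere else, so the ideal $(q_1,\dots,q_k,h_1,\dots,h_{\ell'})$ is generically \emph{not} a Schubert determinantal ideal (it is the ``star'' ideal $C_{y,I_w}$ of a geometric vertex decomposition, or a subideal of it, not an $N_{y,I_{w''}}$), and your claimed reduction to a shorter permutation has no justification and I believe is false. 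This is the reason the paper does not take this route: after reducing to the case where $(i,j)$ is the unique lower outside corner and $Q_{y,I_w}=C_{y,I_w}$, it splits again on whether $\Dom(w)=\emptyset$ (then $w$ is vexillary, invoking \cite[Theorem 3.8]{KMY09}, not the inductive hypothesis) versus $\Dom(w)\neq\emptyset$, and in the latter case it runs a direct $s$-polynomial reduction argument. That argument leans on two structural consequences of the absence of Type 1 and Type 2 obstructions---all essential boxes lie in row $i$ or column $j$, and there is a gap in $\Ess(w)$ between $\Dom(w)$ and $(i,j)$ in both directions---together with the Gr\"obner basis theorem for mixed ladder determinantal ideals \cite[Theorem 1.10]{Gor07} and the sparse maximal-minors theorem \cite{CDG15,Boo11}. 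None of this machinery appears in your sketch, and without it (or a proof of your structural claim, which I do not think holds) the second case is not established. Your meta-remarks about Types 1--3 correctly identify that the pattern avoidance is doing the work, but the concrete mechanism---a Buchberger-style reduction assembled from ladder-determinantal and sparse-maximal-minor Gr\"obner bases---is missing.
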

\begin{proof}
Fix a permutation $w \in S_n$ that has no obstruction of Type $1$, $2$, or $3$, and assume $D_w \neq \Dom(w)$.  First suppose that $D_w$ has some lower outside corner $(i,j)\notin \Dom(w)$ corresponding to the variable $y$ in $Z_w$ satisfying $\rank_w(i,j)+1 = \min\{m_1,m_2\}$, with notation as in Definition \ref{def:Qdef}.  

Write the CDG generators of $I$ as $\{yq_1+r_1, \ldots, yq_k+r_k, h_1, \ldots, h_\ell \}$, where $y$ does not divide any $q_i$, $r_i$, or $h_j$.  As discussed in Lemma \ref{lem:SchubertDel}, every $(\rank_w(i,j)+1)$-minor involving row $i$ and column $j$ has a term divisible by $y$, and so $\{q_1, \ldots, q_k\}$ generates the ideal of $\rank_w(i,j)$-minors in the submatrix of $Z_w$ strictly northwest of $y$.  Because the $Q_{y,I}$ is an ideal of maximal minors (after possible removing full rows or columns of $0$'s), the result follows from \cite[Theorem 4.2]{CDG15} or \cite[Proposition 5.4]{Boo11}.  

Alternatively, suppose that $D_w$ has no such lower outside corner, and fix any lower outside corner of $D_w$.  Because $Q_{y,I_w}$ depends only on $D_w$ weakly northwest of $(i,j)$, we may assume that $(i,j)$ is the only lower outside corner of $D_w$ and that $(1,j), (i,1) \notin \Dom(w)$.  With this assumption, $Q_{y,I_w} +(z_{a,b} \mid (a,b) \in \Dom(w))= C_{y,I_w}$.  Because the $z_{a,b}$ with $(a,b) \in \Dom(w)$ are indeterminates that do not divide any term of any CDG generator of $Q_{y,I_w}$, it follows that $Q_{y,I_w}$ is CDG if and only if $C_{y,I_w}$ is CDG.  

If $\Dom(w) = \emptyset$, then, because $w$ has no obstruction of Type $1$, $w$ must be vexillary and so $I_w$ is CDG by \cite[Theorem 3.8]{KMY09}.   It then follows from  \cite[Theorem 2.1(a)]{KMY09} that $C_{y,I_w}$ is CDG as well.  

Now suppose $\Dom(w) \neq \emptyset$.  The assumptions that $(1,j), (i,1) \notin \Dom(w)$ imply that there is at least one element of $\Dom(w) \cap \Ess(w)$ northwest of $(i,j)$.  Choose the southernmost such element and label it $(r,s)$ and the easternmost element and label it $(r',s')$.  Because $w$ has no obstruction of Type $1$, all essential boxes of $D_w \setminus \Dom(w)$ must be in either row $i$ or column $j$.  Because $\rank_w(i,j)+1 < \min\{i,j\}$, there must be at least one essential box in row $i$ and at least one in column $j$ aside from $(i,j)$. Then because $w$ has no obstruction of Type $2$, there are no elements of $\Ess(w)$ north of row $i$ and south of row $r$ or west of column $j$ and east of column $s'$.  

We will argue directly in this case that for each pair $q_a$ and $h_b$, their $s$-polynomial has a Gr\"obner reduction by the CDG generators of $Q_{y,I_w}$.  (For an example illustrating this process, see Example \ref{ex:Spoly} below.)  Choose such a $q_a$ and $h_b$.  Because the CDG generators of $N_{y,I_w}$ form a Gr\"obner basis by induction on the Coxeter length of $w$ and Corollary \ref{cor:GrobnerDel}, we may assume that $q_a \notin (h_1, \ldots, h_\ell)$.  

Because $Q_{y,I_w}$ involves only indeterminates of $Z_w$ northwest of $z_{i,j}$, we will work within the submatrix of $Z_w$ northwest of $z_{i,j}$, which we will call $Y_w$.  Suppose that $h_b$ is determined by the essential box at $(i',j)$ for some $i'<i$.  (The case of $(i,j')$ with $j'<j$ will follow by symmetry.)  We consider two cases.  

Case 1: Suppose that $(i',j-1) \in D_w$.  Then, because $w$ has no obstruction of Type $1$, there can be no element of $\Dom(w) \cap \Ess(w)$ northwest of $(i',j)$.  In particular, $r' \geq i'$, and $h_b$ is a $(\rank_w(i',j)+1)$-minor in the submatrix of $Y_w$ formed of its final $j-s'$ columns, which is a generic matrix and which we will call $Y'_w$.  Suppose that there are $t$ columns determining $q_a$ strictly east of column $s'$ and $\rank_w(i,j)-t$ columns determining $q_a$ weakly west of column $s'$.  Express $q_a$ as a sum of products of $t$-minors and $(\rank_w(i,j)-t)$-minors corresponding to this subdivision.  For any fixed set of rows of $Y'_w$, the set of $(\rank_w(i',j)+1)$-minors weakly northwest of $(i',j)$ together with the $t$-minors strictly northwest of $(i,j)$ in the submatrix of $Y'_w$ including only those specified rows forms a Gr\"obner basis for the ideal they generate because it is a mixed ladder determinantal ideal \cite[Theorem 1.10]{Gor07}.  

Choose the $t$-minor $\varepsilon_1$ from the eastern $t$ columns determining $q_a$ and the $(\rank_w(i,j)-t)$-minor $\varepsilon_2$ from the remaining columns satisfying $LT(\varepsilon_1) \cdot LT(\varepsilon_2) = LT(q_a)$.  Because $\varepsilon_1$ and $h_b$ belong to the ideal of $t$-minors weakly north of their southernmost entries together with the $(\rank_w(i',j)+1)$-minors northwest of $(i',j)$ in $Y'_w$, their $s$-polynomial $s(\varepsilon_1,h_b)$ has a Gr\"obner reduction $s(\varepsilon_1,h_b) = \sum \alpha_c \delta_c $ by the natural generators of that ideal.  For each $\delta_c \in N_{y,I_w}$, set $\widehat{\delta_c} = LT(\varepsilon_2) \cdot \delta_c$, and for each $\delta_c$ involving some row south of $i'$, set $\widehat{\delta_c}$ to be (up to sign) the determinant of the augmentation of the matrix determining $\delta_c$ by the rows and columns determining $\varepsilon_2$ (with sign chosen so that $LT(\delta_c)$ and $LT(\widehat{\delta_c})$ share a sign).  Let $s(q_a,h_b)$ denote the $s$-polynomial of $q_a$ and $h_b$.  

We claim that $s(q_a,h_b)-\sum \alpha_c \widehat{\delta_c} \in N_{y,I}$.  It is clear that $s(q_a,h_b)-\sum \alpha_c \widehat{\delta_c}$ contains a $LT(\varepsilon_2)$-multiple of $s(\varepsilon_1,h_b)-\sum \alpha_c \delta_c$, which is $0$ because $s(\varepsilon_1,h_b)-\sum \alpha_c \delta_c$ is.  Fix any non-leading term $\mu$ of $\varepsilon_2$, and write $s(q_a,h_b)-\sum \alpha_c \widehat{\delta_c} = \mu \tilde{s}+\tilde{\tilde{s}}$ where $\mu$ does not divide any term of $\tilde{\tilde{s}}$.  For each column involved in $\varepsilon_2$, whenever a different variable from that column divides $\mu$ and $LT(\varepsilon_2)$, replace in $\sum \alpha_c \widehat{\delta_c}$ the variables in the row of the divisor of $LT(\varepsilon_2)$ with the variables in the same columns from the row of the corresponding divisor of $\mu$, which we note gives an expression of $\tilde{s}$ in terms of the natural generators of the ideal of $(\rank_w(i',j)+1)$-minors northwest of $(i',j)$, each of which is a CDG generator of $N_{y,I}$.

Now because $LT(\alpha_c\delta_c) \leq LT(s(\varepsilon_1,h_b))$ and $LT(\alpha_c \widehat{\delta_c}) = LT(\varepsilon_2) \cdot LT(\alpha_c\delta_c)$ for each $c$ and because $LT(s(q_a,h_b)) = LT(\varepsilon_2) \cdot LT(s(\varepsilon_1,h_b))$, subtracting each $\alpha_c \widehat{\delta_c}$ is a valid step in a Gr\"obner reduction of $s(q_a,h_b)$ by the generators of $Q_{y,I_w}$.  The fact that the CDG generators of $N_{y,I_w}$, each of which is a CDG generator of $Q_{y,I_w}$, form a Gr\"obner basis for the ideal they generate implies that $s(q_a,h_b)-\sum \alpha_c \widehat{\delta_c} \in N_{y,I_w}$ has a reduction in terms of those generators and so that $s(q_a,h_b)$ has a reduction by the CDG generators of $Q_{y,I_w}$.

Case 2: Suppose $(i',j-1) \notin D_w$.  Then there can be no $j''<j$ with $(i',j'') \in D_w\setminus \Dom(w)$ because $w$ has no obstruction of Type $3$.  Hence, $\rank_w(i',j)+1 = \min\{j-k \mid (i',k) \in \Dom(w)\}$, and so the $(\rank_w(i',j)+1)$-minors northwest of $(i',j)$ are the maximal minors of the submatrix of $Z_w$ northwest of $(i',j)$ after removing complete rows or columns of $0$'s.  Then the argument is similar to the first case but uses, instead of results on ladder determinantal ideals in a generic matrix, the fact that the maximal minors of matrices of indeterminates and $0$'s form a Gr\"obner basis by \cite[Theorem 4.2]{CDG15} or \cite[Proposition 5.4]{Boo11}.  
\end{proof}

\begin{example}\label{ex:Spoly}
Observe that $w = 378149256$, whose annotated Rothe diagram appears below, is an example of a CDG permutation with a unique lower outside corner  $(i,j) = (6,6)$ and $4=\rank_w(i,j)+1 \neq \min\{m_1,m_2\}=6$.  We illustrate how the reduction of the $s$-polynomial of $h_b = \begin{vmatrix} z_{2,3} & z_{2,4} \\ z_{3,3} & z_{3,4} \end{vmatrix}$ and $\begin{vmatrix} z_{2,3} & z_{2,5} \\ z_{5,3} & z_{5,5} \end{vmatrix}$ gives rise to that of $\begin{vmatrix} z_{2,3} & z_{2,4} \\ z_{3,3} & z_{3,4} \end{vmatrix}$ and $q_a = \begin{vmatrix} 0 & z_{2,3} & z_{2,5} \\ z_{4,2} & z_{4,3} & z_{4,5} \\ z_{5,2} & z_{5,3} & z_{5,5} \end{vmatrix}$ by the process described in Lemma \ref{lem:GrobnerQs}.  Using that \[
z_{5,5} { \color{burntorange}\begin{vmatrix} z_{2,3} & z_{2,4} \\ z_{3,3} & z_{3,4} \end{vmatrix}}-z_{3,4} \begin{vmatrix} z_{2,3} & z_{2,5} \\ z_{5,3} & z_{5,5} \end{vmatrix} +z_{2,4} \begin{vmatrix} z_{3,3} & z_{3,5} \\ z_{5,3} & z_{5,5} \end{vmatrix} + z_{5,3} {\color{burntorange} \begin{vmatrix} z_{2,4} & z_{2,5} \\ z_{3,4} & z_{3,5} \end{vmatrix}} = 0,
\] we construct the equation \begin{align*}
 {\color{blue}\underline{z_{4,2} z_{5,5} \begin{vmatrix} z_{2,3} & z_{2,4} \\ z_{3,3} & z_{3,4} \end{vmatrix}}}+{\color{blue}\underline{z_{3,4}} }\begin{vmatrix} 0 & \color{blue}\underline{z_{2,3}} & \color{blue}\underline{z_{2,5}} \\ \color{blue}\underline{z_{4,2}} & z_{4,3} & z_{4,5} \\  z_{5,2} & \color{blue}\underline{z_{5,3}} & \color{blue}\underline{z_{5,5}} \end{vmatrix}  &- 
  {\color{blue}\underline{z_{2,4}}} \begin{vmatrix} 0 & \color{blue}\underline{z_{3,3}} & \color{blue}\underline{z_{3,5}} \\ \color{blue}\underline{z_{4,2}} & z_{4,3} & z_{4,5} \\ z_{5,2} & \color{blue}\underline{z_{5,3}} & \color{blue}\underline{z_{5,5}} \end{vmatrix}+ {\color{blue}\underline{z_{4,2} z_{5,3} \begin{vmatrix} z_{2,4} & z_{2,5} \\ z_{3,4} & z_{3,5} \end{vmatrix} }}\\
  &= z_{5,2} \left(z_{4,5}   {\color{burntorange}\begin{vmatrix} z_{2,3} & z_{2,4} \\ z_{3,3} & z_{3,4} \end{vmatrix}} +z_{4,3} { \color{burntorange} \begin{vmatrix} z_{2,4} & z_{2,5} \\ z_{3,4} & z_{3,5} \end{vmatrix}}\right).
\end{align*} In the latter equation, we find an $z_{4,2}$-multiple of the first equation (whose terms appear in blue and also are underlined) and an $z_{5,2}$-multiple of an element easily seen to be in $N_{y,I_w}$.  We record in orange the generators of $N_{y,I_w}$ that give the inclusion of the $z_{5,2}$-multiple summand of $s(q_a,h_b)$ in $N_{y,I_w}$ and see that relation arising from the first equation.  The new relation is obtained from the first by exchanging rows $4$ and $5$.  
\[
\hspace{1cm} 
\hspace{0.5cm} 
  \begin{tikzpicture}[x=1.5em,y=1.5em]
      \draw[color=black, thick](0,1)rectangle(9,10);
     \filldraw[color=black, fill=gray!30, thick](0,9)rectangle(1,10);
      \filldraw[color=black, fill=gray!30, thick](1,9)rectangle(2,10);
      \filldraw[color=black, fill=gray!30, thick](0,7)rectangle(1,8);
      \filldraw[color=black, fill=gray!30, thick](0,8)rectangle(1,9);
      \filldraw[color=black, fill=gray!30, thick](1,8)rectangle(2,9);
      \filldraw[color=black, fill=gray!30, thick](1,7)rectangle(2,8);
      \filldraw[color=black, fill=gray!30, thick](1,4)rectangle(2,5);
     \filldraw[color=black, fill=gray!30, thick](1,5)rectangle(2,6);
      \filldraw[color=black, fill=gray!30, thick](5,4)rectangle(6,5);
       \draw[color=black](1,3)rectangle(2,4);
      \draw[color=black](3,3)rectangle(4,4);  
     \draw[thick,color=red] (9,9.5)--(2.5,9.5)--(2.5,1);
     \draw[thick,color=red] (9,8.5)--(6.5,8.5)--(6.5,1);
     \draw[thick,color=red] (9,5.5)--(3.5,5.5)--(3.5,1);
     \draw[thick,color=red] (9,1.5)--(5.5,1.5)--(5.5,1);
     \draw[thick,color=red] (9,7.5)--(7.5,7.5)--(7.5,1);
     \draw[thick,color=red] (9,2.5)--(4.5,2.5)--(4.5,1);
     \draw[thick,color=red] (9,3.5)--(1.5,3.5)--(1.5,1);
     \draw[thick,color=red] (9,6.5)--(0.5,6.5)--(0.5,1);
     \draw[thick,color=red] (9,4.5)--(8.5,4.5)--(8.5,1);
     \filldraw [black](2.5,9.5)circle(.1);
     \filldraw [black](6.5,8.5)circle(.1);
     \filldraw [black](0.5,6.5)circle(.1);
      \filldraw [black](5.5,1.5)circle(.1);
     \filldraw [black](7.5,7.5)circle(.1);
     \filldraw [black](4.5,2.5)circle(.1);
      \filldraw [black](1.5,3.5)circle(.1);
      \filldraw [black](3.5,5.5)circle(.1);
      \filldraw [black](8.5,4.5)circle(.1);
       \draw[color=black](2,7)rectangle(3,8);
      \filldraw[color=black, fill=gray!30, thick](3,7)rectangle(4,8); 
      \filldraw[color=black, fill=gray!30, thick](4,7)rectangle(5,8);
      \filldraw[color=black, fill=gray!30, thick](5,7)rectangle(6,8); 
      \filldraw[color=black, fill=gray!30, thick](5,8)rectangle(6,9);
      \draw[color=black](5,7)rectangle(6,8); 
     \draw[color=black](2,8)rectangle(3,9);
      \draw[color=black](3,8)rectangle(4,9); 
      \draw[color=black](4,8)rectangle(5,9);
      \draw[color=black](5,8)rectangle(6,9); 
      \draw[color=black](2,6)rectangle(3,7);
      \draw[color=black](0,1)rectangle(1,2);
      \draw[color=black](1,2)rectangle(2,3);
      \draw[color=black](3,4)rectangle(4,5);
      \draw[color=black](1,6)rectangle(2,7);
      \draw[color=black](2,6)rectangle(3,7);
      \filldraw[color=black, fill=gray!30, thick](3,8)rectangle(4,9); 
      \filldraw[color=black, fill=gray!30, thick](4,8)rectangle(5,9);
      \draw[color=black](5,6)rectangle(6,7); 
      \draw[color=black](0,5)rectangle(1,6);
      \draw[color=black](1,5)rectangle(2,6);  
      \draw[color=black](2,5)rectangle(3,6);
      \draw[color=black](3,5)rectangle(4,6); 
      \draw[color=black](4,5)rectangle(5,6);
      \draw[color=black](5,5)rectangle(6,6); 
     \draw[color=black](0,4)rectangle(1,5); 
      \draw[color=black](2,4)rectangle(3,5);
    \filldraw[color=black, fill=gray!30, thick](4,4)rectangle(5,5);
      \draw[color=black](5,4)rectangle(6,5); 
      \draw[color=black](0,3)rectangle(1,4); 
      \draw[color=black](2,3)rectangle(3,4);
      \draw[color=black](4,3)rectangle(5,4);
      \draw[color=black](5,3)rectangle(6,4); 
      \draw[color=black](0,2)rectangle(1,3); 
      \draw[color=black](2,2)rectangle(3,3);
      \draw[color=black](3,2)rectangle(4,3);
      \draw[color=black](4,2)rectangle(5,3);
      \draw[color=black](5,2)rectangle(6,3); 
      \draw[color=black](1,1)rectangle(2,2);  
      \draw[color=black](0,6)rectangle(1,7);  
      \draw[color=black](0,9)rectangle(1,10); 
      \draw[color=black](1,9)rectangle(2,10); 
      \draw[color=black](2,1)rectangle(3,2);
      \draw[color=black](2,9)rectangle(3,10); 
      \draw[color=black](3,1)rectangle(4,2); 
      \draw[color=black](3,9)rectangle(4,10); 
      \draw[color=black](4,1)rectangle(5,2);
      \draw[color=black](4,9)rectangle(5,10); 
      \draw[color=black](5,1)rectangle(6,2); 
      \draw[color=black](5,9)rectangle(6,10); 
      \draw[color=black](6,1)rectangle(7,2); 
      \draw[color=black](6,2)rectangle(7,3); 
      \draw[color=black](6,3)rectangle(7,4); 
      \draw[color=black](6,4)rectangle(7,5); 
      \draw[color=black](6,5)rectangle(7,6); 
      \draw[color=black](6,6)rectangle(7,7); 
      \draw[color=black](6,7)rectangle(7,8); 
      \draw[color=black](6,8)rectangle(7,9);
      \draw[color=black](6,9)rectangle(7,10);  
       \draw[color=black](7,1)rectangle(8,2); 
      \draw[color=black](7,2)rectangle(8,3); 
      \draw[color=black](7,3)rectangle(8,4); 
      \draw[color=black](7,4)rectangle(8,5); 
      \draw[color=black](7,5)rectangle(8,6); 
      \draw[color=black](7,6)rectangle(8,7); 
      \draw[color=black](7,7)rectangle(8,8); 
      \draw[color=black](7,8)rectangle(8,9);
       \draw[color=black](7,9)rectangle(8,10);  
      \draw[color=black](8,1)rectangle(9,2); 
      \draw[color=black](8,2)rectangle(9,3); 
      \draw[color=black](8,3)rectangle(9,4); 
      \draw[color=black](8,4)rectangle(9,5); 
      \draw[color=black](8,5)rectangle(9,6); 
      \draw[color=black](8,6)rectangle(9,7); 
      \draw[color=black](8,7)rectangle(9,8); 
      \draw[color=black](8,8)rectangle(9,9); 
      \draw[color=black](8,9)rectangle(9,10); 
      \draw[color=black](4,5)rectangle(6,7); 
      \node at (0.5,9.5) {$0$};
      \node at (1.5,9.5) {$0$};
      \node at (0.5,7.5) {$0$};
     \node at (1.5,7.5) {$0$};
      \node at (0.5,8.5) {$0$};
      \node at (1.5,8.5) {$0$};
      \node at (1.5,4.5) {$1$};
       \node at (1.5,5.5) {$1$};
       \node at (5.5,4.5) {$3$};
       \node at (4.5,4.5) {$3$};
       \node at (3.5,8.5) {$1$};
       \node at (4.5,8.5) {$1$};
       \node at (3.5,7.5) {$1$};
       \node at (4.5,7.5) {$1$};
       \node at (5.5,7.5) {$1$};
       \node at (5.5,8.5) {$1$};
     \end{tikzpicture}.
       \]
\end{example}

Before proceeding, we recall one very useful lemma.

\begin{lemma}\label{lem:concatinate}\cite[Lemma 1.3.14]{Con93}
Let $I$ and $J$  be  homogeneous ideals of a polynomial ring over a field, and fix a term order $\sigma$.  With respect to $\sigma$, let  $\mathcal{F}$  be  a  Gr\"obner basis of $I$ and $\mathcal{G}$ a Gr\"obner basis of $J$.  Then $\mathcal{F} \cup \mathcal{G}$ is a Gr\"obner basis of $I+J$ if and only if for all $f \in \mathcal{F}$ and $g \in \mathcal{G}$ there exists $e \in I \cap J$ such that $LT(e) = LCM(LT(f),LT(g))$. \qed
\end{lemma}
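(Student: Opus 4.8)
The plan is to run Buchberger's criterion in its refined ``$t$-representation'' form: a finite set $H$ of nonzero polynomials is a Gr\"obner basis of $(H)$ with respect to $\sigma$ precisely when, for every pair $h_1,h_2\in H$, the $S$-polynomial $S(h_1,h_2)$ admits an expression $\sum_k c_k h_k$ with $h_k\in H$ in which each summand $c_kh_k$ has leading monomial strictly below $LCM(LT(h_1),LT(h_2))$. After rescaling I may assume every element of $\mathcal{F}$ and of $\mathcal{G}$ is monic. Since $\mathcal{F}\cup\mathcal{G}$ generates $I+J$, and since for $f,f'\in\mathcal{F}$ the required expression for $S(f,f')$ is already supplied by $\mathcal{F}$ being a Gr\"obner basis of $I$ (symmetrically for pairs inside $\mathcal{G}$), the whole statement reduces to the mixed pairs: for $f\in\mathcal{F}$, $g\in\mathcal{G}$, and $\mu:=LCM(LT(f),LT(g))$, I must show that $S(f,g)$ has an admissible expression over $\mathcal{F}\cup\mathcal{G}$ if and only if some $e\in I\cap J$ has $LT(e)=\mu$.

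The bridge between the two conditions is the identity
\[
S(f,g)=\frac{\mu}{LT(f)}\,f-\frac{\mu}{LT(g)}\,g=\Bigl(\frac{\mu}{LT(f)}\,f-e\Bigr)-\Bigl(\frac{\mu}{LT(g)}\,g-e\Bigr).
\]
If such an $e$ exists, rescaled so that $LT(e)=\mu$, then $\frac{\mu}{LT(f)}f-e\in I$ has leading monomial strictly below $\mu$, hence a Gr\"obner reduction over $\mathcal{F}$ all of whose summands have leading monomial below $\mu$; symmetrically $\frac{\mu}{LT(g)}g-e\in J$ has such a reduction over $\mathcal{G}$. Concatenating these two reductions exhibits the admissible expression for $S(f,g)$. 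Conversely, if $\mathcal{F}\cup\mathcal{G}$ is a Gr\"obner basis of $I+J$ and $S(f,g)=A+B$ is a Gr\"obner reduction of $S(f,g)$ over $\mathcal{F}\cup\mathcal{G}$, where $A\in I$ collects the $\mathcal{F}$-summands and $B\in J$ the $\mathcal{G}$-summands (both with leading monomial below $\mu$, as every summand and hence every partial sum is), then
\[
e:=\frac{\mu}{LT(f)}\,f-A=\frac{\mu}{LT(g)}\,g+B
\]
lies in $I$ by the first equality and in $J$ by the second, so $e\in I\cap J$, and $LT(e)=\mu$ since $\frac{\mu}{LT(f)}f$ has leading monomial $\mu$ while $LT(A)\prec\mu$.

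The step I expect to need the most care is the backward direction's appeal to the refined form of Buchberger's criterion: one must invoke the version of the theorem guaranteeing that exhibiting, for each $S$-polynomial, \emph{any} representation over the generators whose summands all have leading monomial strictly below the relevant $LCM$ already certifies the Gr\"obner basis property, rather than checking that the literal division-algorithm remainder modulo $\mathcal{F}\cup\mathcal{G}$ vanishes; and one must track leading monomials precisely enough --- using $LT(e)=\mu$ exactly, not merely $LT(e)\preceq\mu$ --- to be sure that $\frac{\mu}{LT(f)}f-e$ and $\frac{\mu}{LT(g)}g-e$ genuinely drop below $\mu$, so that their Gr\"obner reductions over $\mathcal{F}$ and over $\mathcal{G}$ can be spliced without any summand reaching degree $\mu$.
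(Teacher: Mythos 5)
The paper states this lemma as a citation to Conca's dissertation \cite[Lemma 1.3.14]{Con93} and supplies no proof, so there is nothing in the paper to compare against. Your argument is correct and is the standard self-contained derivation: the forward and backward directions are exactly the two halves of the $t$-representation refinement of Buchberger's criterion, with the decomposition $S(f,g)=\bigl(\tfrac{\mu}{LT(f)}f-e\bigr)-\bigl(\tfrac{\mu}{LT(g)}g-e\bigr)$ splicing the $\mathcal{F}$- and $\mathcal{G}$-reductions below $\mu$ in one direction, and $e:=\tfrac{\mu}{LT(f)}f-A=\tfrac{\mu}{LT(g)}g+B$ extracting the witness in the other. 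The leading-term bookkeeping is handled correctly: you rightly insist on $LT(e)=\mu$ (equality, not $\preceq$) so that the difference genuinely drops below $\mu$, and you correctly observe that in the converse direction $LT(A),LT(B)\prec\mu$ follows because every summand of a Gr\"obner reduction of $S(f,g)$ has leading monomial bounded by $LT(S(f,g))\prec\mu$. Note that homogeneity of $I$ and $J$ plays no role in your argument, which is consistent with the fact that the lemma holds without that hypothesis; Conca states it in the homogeneous setting only because that is his working context.
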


We are now prepared to show that, under a suitable inductive hypothesis, there is a lower outside corner so that the CDG generators of $C_{y,I_w}$ are Gr\"obner.

\begin{lemma}\label{lem:GrobnerLink}
With notation as above, if $w \in S_n$ avoids obstructions of Types $1$, $2$, and $3$ and Conjecture \ref{conj:mainresult} holds for all permutations of smaller Coxeter length than that of $w$, then either $D_w = \Dom(w)$ or there is some lower outside corner $(i,j)$ of $D_w \setminus \Dom(w)$ corresponding to the variable $y = z_{i,j}$ so that the generators $\{q_1, \ldots, q_k, h_1, \ldots, h_\ell\}$ of $C_{y,I_w}$ form a Gr\"obner basis under any diagonal term order.
\end{lemma}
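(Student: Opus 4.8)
The plan is to take the lower outside corner $(i,j)$ supplied by Lemma~\ref{lem:GrobnerQs}, so that $Q_{y,I_w}$ is CDG, and then to assemble a Gr\"obner basis of $C_{y,I_w}$ by concatenating Gr\"obner bases of $Q_{y,I_w}$ and $N_{y,I_w}$ via Lemma~\ref{lem:concatinate}. The first step is to record that $C_{y,I_w} = Q_{y,I_w} + N_{y,I_w}$: when $\rank_w(i,j)+1 = \min\{m_1,m_2\}$ this is immediate from the definitions together with $N_{y,I_w} = (h_1,\ldots,h_\ell)$, and in the remaining case it holds because the extra generators $h_1,\ldots,h_{\ell'}$ that appear in $Q_{y,I_w}$ already lie in $N_{y,I_w}$. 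The second step is to fix Gr\"obner bases for the two summands under an arbitrary diagonal term order: the CDG generators of $Q_{y,I_w}$ form one by Lemma~\ref{lem:GrobnerQs}, and the CDG generators of $N_{y,I_w}$ form one because, by Lemma~\ref{lem:SchubertDel}, $N_{y,I_w} = I_{w'}$ for a permutation $w'$ of strictly smaller Coxeter length, and, by Corollary~\ref{cor:GrobnerDel}, $w'$ again avoids obstructions of Types $1$, $2$, and $3$, so the inductive hypothesis applies to $w'$. (Here one should use Remark~\ref{rem:newEss} to reconcile these generators with $\{h_1,\ldots,h_\ell\}$: the only discrepancy is a bounded number of minors attached to $(i-1,j)$ or $(i,j-1)$, and one checks that $\{h_1,\ldots,h_\ell\}$ is itself a Gr\"obner basis of $N_{y,I_w}$, so that $\mathcal{F}\cup\mathcal{G}$ is the claimed set.)

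With these Gr\"obner bases in hand, Lemma~\ref{lem:concatinate} reduces the lemma to exhibiting, for every CDG generator $f$ of $Q_{y,I_w}$ and every CDG generator $g$ of $N_{y,I_w}$, an element $e \in Q_{y,I_w} \cap N_{y,I_w}$ with $LT(e) = LCM(LT(f),LT(g))$. If $f$ is a shared generator $h_b$ (which occurs only in the second branch of Definition~\ref{def:Qdef}), one can simply take $e$ to be the monomial multiple $\bigl(LT(g)/\gcd(LT(h_b),LT(g))\bigr)\,h_b \in (h_b) \subseteq Q_{y,I_w}\cap N_{y,I_w}$, which has the prescribed leading term. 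The substantive case is $f = q_a$. There $q_a$ is, up to diagonal cofactors, a $(\rank_w(i,j)+1)$-minor of $X_w$ strictly northwest of $y$, while $g$ is determined by an essential box $(i',j)$ in the column of $y$ (or $(i,j')$ in the row of $y$), and I would build $e$ exactly as the augmented minors $\widehat{\delta_c}$ are built in the proof of Lemma~\ref{lem:GrobnerQs}: split $q_a$, by a Laplace expansion, along the columns (or rows) west (north) of the divide separating the generic block of $X_w$ from the block meeting $\Dom(w)$, select the factorization realizing $LT(q_a)$, and adjoin to the submatrix defining $g$ the rows and columns of the generic factor, with sign chosen to match leading terms. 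The resulting minor lies in $N_{y,I_w}$, lies in $Q_{y,I_w}$, and has leading term $LCM(LT(q_a),LT(g))$. As in Lemma~\ref{lem:GrobnerQs}, the verification splits according to whether $(i',j-1) \in D_w$, handled through mixed ladder determinantal ideals by \cite[Theorem~1.10]{Gor07}, or not, handled through maximal minors of matrices of indeterminates and zeros by \cite[Theorem~4.2]{CDG15} or \cite[Proposition~5.4]{Boo11}; the absence of obstructions of Types $1$, $2$, and $3$ is what confines the essential boxes to the configurations for which these two cases are exhaustive and the blocks align.

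I expect the main obstacle to be precisely this last verification — checking, uniformly over all diagonal term orders and all positions of the essential box relative to $\Dom(w)$, both that $e$ really has leading term $LCM(LT(q_a),LT(g))$ and that it really lies in $Q_{y,I_w}$ and in $N_{y,I_w}$ at the same time. This is the same determinantal and combinatorial bookkeeping that underlies the proof of Lemma~\ref{lem:GrobnerQs}, now routed through the intersection criterion of Lemma~\ref{lem:concatinate} rather than through a direct $s$-polynomial reduction, and the reuse of that argument is what makes the present lemma go through.
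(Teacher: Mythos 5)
Your high-level plan is the same one the paper uses: take the lower outside corner from Lemma~\ref{lem:GrobnerQs}, observe $C_{y,I_w} = Q_{y,I_w} + N_{y,I_w}$, use Corollary~\ref{cor:GrobnerDel} and the inductive hypothesis for $N_{y,I_w}$, and apply Lemma~\ref{lem:concatinate} to reduce to exhibiting, for each pair of generators, an element $e$ of the intersection with the prescribed leading term. The two easy cases ($f$ itself already shared, and a trivial pair) are handled identically. The divergence, and the place where your argument has a real gap, is the construction of $e$ in the substantive case where $LT(q_a)$ and $LT(h_b)$ overlap.

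You propose to build $e$ as a genuine minor of $X_w$, by Laplace-splitting $q_a$ and adjoining the ``generic factor'' $\varepsilon_1$ to the submatrix defining $h_b$, in analogy with the $\widehat{\delta_c}$ from Lemma~\ref{lem:GrobnerQs}. That analogy doesn't carry over. First, the $\widehat{\delta_c}$ construction there is used to rewrite terms in an $s$-polynomial reduction, not to produce an element of $Q_{y,I_w} \cap N_{y,I_w}$ with a prescribed lead term, so it says nothing about membership in $Q_{y,I_w}$. Second, and more seriously, when $LCM(LT(q_a),LT(h_b)) \neq LT(q_a)\cdot LT(h_b)$ — the only case left at this point — some variables in $LT(\varepsilon_1)$ may already sit in rows or columns used by $h_b$, so the ``augmentation'' is ill-defined or produces a non-square array, and you give no recipe for handling the overlap. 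Third, even when the augmentation is well-defined, it is not automatic that the resulting minor lands in $N_{y,I_w} = I_{w'}$ (it may involve rows south of $i'$ and columns not covered by any essential box of $w'$), nor that its lead term is exactly the LCM, since the diagonal of the augmented matrix mixes entries from both $q_a$ and $h_b$ in ways that depend on the configuration. You flag all of this as the ``main obstacle,'' which is exactly right, but flagging it is not solving it.

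The paper does something genuinely different here: it builds a $d\times d$ matrix $M$ (with $d = \deg LCM$) from the union of the rows and columns of $q_a$ and $h_b$, then deliberately places zeros in the off-block corner and duplicates any row or column that carries a lead-term variable from both $q_a$ and $h_b$, replacing the duplicate's unwanted entries by zero. This $M$ is \emph{not} a submatrix of $X_w$. The zeroing forces the Laplace expansion of $\det(M)$ along $q_a$'s rows to be a sum of products of minors already in $(q_1,\ldots,q_k)$, and the Laplace expansion along $h_b$'s columns to be a sum of products of minors already in $(h_1,\ldots,h_\ell)$, giving membership in both ideals by construction; the duplication is what reconciles the shared variables of the two lead terms, so that $LT(\det(M)) = LCM(LT(q_a),LT(h_b))$ can be certified by a direct term-by-term comparison (using the absence of Type~1 obstructions to control the block of $\widehat M$ where both lead terms could interfere). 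Your augmented-minor approach has none of these structural guarantees, and I don't see how to supply them without essentially reinventing the $M$ construction.
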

\begin{proof}
Fix a diagonal term order $\sigma$, and assume $D_w \neq \Dom(w)$.  By Lemma \ref{lem:GrobnerQs}, there is some lower outside corner of $D_w$ so that, with notation as above, there exists $\ell' \geq 0$ so that the generators $\{q_1, \ldots, q_k, h_1, \ldots, h_{\ell'} \}$ form a Gr\"obner basis for $Q_{y,I_w}$.  

By Corollary \ref{cor:GrobnerDel} and the inductive hypothesis, $\{h_1,\ldots, h_\ell\}$ is a diagonal Gr\"obner basis for $N_{y,I_w}$.  Then by Lemma \ref{lem:concatinate}, the generators $\{q_1, \ldots, q_k, h_1, \ldots, h_\ell\}$ form a diagonal Gr\"obner basis for $C_{y,I_w}$ if and only if for every $q_a$ and $h_b$ there exists some $f \in (q_1, \ldots, q_k, h_1, \ldots, h_{\ell'}) \cap (h_1, \ldots, h_\ell)$ satisfying $LT(f) = LCM(LT(q_a),LT(h_b))$.  

If $h_b \in (q_1, \ldots, q_k, h_1, \ldots, h_{\ell'})$ or $q_a \in (h_1, \ldots, h_\ell)$, the result follows from Lemma \ref{lem:concatinate}, and if $LCM(LT(q_a),LT(h_b)) = LT(q_a) \cdot LT(h_b)$, then we take $f = q_a \cdot h_b$.  Otherwise, there is some $(r,s) \in \Ess(w) \setminus \Dom(m)$ with $\rank_w(r,s) = \deg(h_b)-1$ corresponding to the variable $e = z_{r,s}$ weakly southeast of all of the variables involved in $h_b$.  Because $w$ has no Type $3$ obstruction, $e$ is not strictly northwest of $y$.  Suppose first that $y$ is strictly east and weakly north of $e$.

In this case, let $M'$ be the matrix consisting of the union of the columns determining $q_a$ and $h_b$ and the union of the rows determining $q_a$ and $h_b$.  We will next describe an auxiliary matrix $M$ formed from $M'$. (For an example of the construction, see Example \ref{ex:formM} below.)  Form a matrix $M$ from $M'$ as follows: First set to $0$ any entry whose row index is not one of the rows determining $q_a$ and whose column index is not one of the columns determining $h_b$.  Next, whenever a column of $M'$ contains a variable dividing the leading term of $q_a$ and a distinct variable dividing the leading term of $h_b$, duplicate that column and replace in one copy of the column the variables coming only from $h_b$ by $0$.  Whenever a row of $M'$ contains a variable dividing the leading term of $q_a$ and a distinct variable dividing the leading term of $h_b$, duplicate that row and replace in one copy of the row the variables coming only from $q_a$ by $0$.  

Now $M$ will be a $d \times d$ matrix where $d = \deg(LCM(LT(q_a),LT(h_b)))$ because it will have one row and one column for each monomial dividing $LT(q_a)$ and one each for every monomial dividing $LT(h_b)$ but not $LT(q_a)$.  

By expressing $\det(M)$ as a sum of products of the $\deg(q_a)$-minors from the rows of $M$ originating from the submatrix of $Z_w$ determining $q_a$ and the $(d-q_a)$-minors in the remaining rows, we see that $\det(M) \in (q_1, \ldots, q_k)$.  Similarly, by expressing $\det(M)$ as a sum of products of $\deg(h_b)$-minors in the column originating from the submatrix of $Z_w$ determining from $h_b$ and $(d-\deg(h_b))$-minors in the remaining columns, we have $\det(M) \in (h_1, \ldots, h_\ell)$.  It is because $y$ is strictly east and weakly north of $e$ that the rows determining $q_a$ that every $\deg(q_a)$-minor in the specified rows is an element of $(q_1, \ldots, q_k)$ and that every $\deg(h_b)$-minor in the specified column is an element of $(h_1, \ldots, h_\ell)$.

Next we will see that $LT(\det(M)) = LCM(LT(q_a),LT(h_b))$.  Call $\tilde{M}$ the submatrix of $M'$ whose entries are northwest of both $e$ and $y$.  Set $\mu_1$ to be the product of the terms of $\tilde{M}$ that divide either $LT(q_a)$ or $LT(h_b)$.  Call $\mu_2$ the leading term of the determinant of the submatrix of $M$ consisting of the rows and columns used to determine $h_b$ excluding those involving a divisor of $\mu_1$.  Similarly, define $\mu_3$ to be the leading term of the determinant of the submatrix of $M$ consisting of the rows and columns determining $q_a$ excluding those involving a divisor of $\mu_1$.  

Notice that $\mu_1 \cdot \mu_2 \cdot \mu_3 = LCM(LT(q_a),LT(h_b))$ and that this product is a term of $\det(M)$.  To see that every other term of $\det(M)$ is smaller under $\sigma$, notice that because $w$ has no obstruction of Type $1$, the submatrix of $M'$ whose entries are both northwest of $e$ and northwest of $y$ must have $0$'s only in full rows and full columns along the north and west sides.  

It will now be more convenient for us to work with $\widehat{M}$, obtained from $M$ by adding the doubled copies of rows and columns obtained in the transition from $M'$ to $M$ so that no variable appears more than once.  Note that $\det(\widehat{M}) = \det(M)$.  If some other term of $\det(\widehat{M})$ is larger than $\mu_1 \cdot \mu_2 \cdot \mu_3$, there must be some entries of $\widehat{M}$ dividing $\mu_1 \cdot \mu_2 \cdot \mu_3$ whose row indices we may permute to obtain a larger monomial.  We may assume that this permutation consists of one cycle.  If all entries divide either $\mu_1 \cdot \mu_2$ or $\mu_1\cdot \mu_3$, we would obtain a term of $q_a$ or $h_b$, respectively, that is strictly larger than its leading term, which also cannot be.  But the permutation cannot send any divisor of $\mu_3$ to the row of a divisor of $\mu_1$, all of which are $0$ in that column, or vice versa.  Hence, $\mu_1 \cdot \mu_2 \cdot \mu_3 = LT(\det(\widehat{M})) = LT(\det(M))$, as desired.

Finally, if $y$ is strictly south and weakly west of $e$, a parallel argument gives the result.
\end{proof}

\begin{example}\label{ex:formM}
Below we give an example of the construction of the matrices $M'$ and $M$.  Let $w = 5237164$, $y = z_{4,6}$, \[
q_a = \det \begin{bmatrix} 0& 0 & {\color{blue}\underline{z_{1,5}}}\\
{\color{blue}\underline{z_{2,2}}} & z_{2,3} & z_{2,5}\\
z_{3,2}& {\color{blue}\underline{z_{3,3}}} & z_{3,5}
 \end{bmatrix} \in Q_{y, I_w}, \mbox{ and } h_b = \det \begin{bmatrix} 
 0 & {\color{blue}\underline{z_{2,2}}} & z_{2,3} & z_{2,4} \\
0 & z_{4,2} & {\color{blue}\underline{z_{4,3}}} & z_{4,4} \\
{ \color{blue} \underline{z_{5,1}}} & z_{5,2} & z_{5,3} & z_{5,4} \\
  z_{6,1} & z_{6,2} & z_{6,3} & {\color{blue}\underline{z_{6,4}}} \\
 \end{bmatrix} \in N_{y,I_w},
 \] in which case $LT(q_a) = z_{1,5}z_{2,2}z_{3,3}$, $LT(h_b) = z_{2,2}z_{4,3}z_{5,1}z_{6,4}$, and \[ 
 LCM(LT(q_a),LT(h_b)) = z_{1,5}z_{2,2}z_{3,3}z_{4,3}z_{5,1}z_{6,4} = LT(\det(M)).
 \]  The variables dividing the leading terms appearing throughout this example are noted in blue and also underlined.  Then \[
 M' = \begin{bmatrix} 
 0 & 0 & 0 & 0 & {\color{blue}\underline{z_{1,5}}}\\
 0 & {\color{blue}\underline{z_{2,2}}} & z_{2,3} & z_{2,4} & z_{2,5}\\
  0 & z_{3,2} & {\color{blue}\underline{z_{3,3}}} & z_{3,4} & z_{3,5}\\
0 & z_{4,2} & {\color{blue}\underline{z_{4,3}}} & z_{4,4} & z_{4,5}\\
  {\color{blue}\underline{z_{5,1}}} & z_{5,2} & z_{5,3} & z_{5,4} & z_{5,5} \\
  z_{6,1} & z_{6,2} & z_{6,3} &  {\color{blue}\underline{z_{6,4}}} & z_{6,5} 
 \end{bmatrix} \mbox{  \hspace{0.5cm}   and  \hspace{0.5cm}   } M  = \begin{bmatrix} 
 0 & 0 & 0 & 0 & 0 & {\color{blue}\underline{z_{1,5}}}\\
 0 & {\color{blue}\underline{z_{2,2}}} & z_{2,3} & z_{2,3} & z_{2,4} & z_{2,5}\\
  0 & z_{3,2} & {\color{blue}\underline{z_{3,3}}} & {\color{blue}\underline{z_{3,3}}} & z_{3,4} & z_{3,5}\\
0 & z_{4,2} & 0 & {\color{blue}\underline{z_{4,3}}}  & z_{4,4} & 0\\
  {\color{blue}\underline{z_{5,1}}} & z_{5,2} & 0 & z_{5,3} & z_{5,4} &0 \\
  z_{6,1} & z_{6,2} & 0  & z_{6,3}& {\color{blue}\underline{z_{6,4}}}  & 0
 \end{bmatrix}.
 \] 

 By expressing $\det(M)$ as a sum of products of $3$-minors in the first $3$ rows of $M$ with $3$-minors in the final $3$ rows, we see $\det(M) \in Q_{y,I_w}$, and, by expressing $\det(M)$ as the sum of products of $4$-minors in columns $1$, $2$, $4$, and $5$ with $2$-minors in columns $3$ and $6$, we see $\det(M) \in N_{y,I_w}$.  Observe that \begin{align*}
 LT(\det(M)) &=  LT\left(\det \begin{bmatrix} z_{2,2} & z_{2,3} \\ z_{3,2} & z_{3,3} \end{bmatrix} \right) \cdot LT\left(\det  \begin{bmatrix} 0 & z_{4,3} & z_{4,4} \\ z_{5,1} & z_{5,3} & z_{5,4} \\ z_{6,1} & z_{6,3} & z_{6,4} \end{bmatrix}\right) \cdot z_{1,5} \cdot\\
 &= (z_{2,2}z_{3,3})(z_{4,3}z_{5,1}z_{6,4})(z_{1,5}).
 \end{align*}
 This expression corresponds to the product $\mu_1 \cdot \mu_2 \cdot \mu_3$ in the proof of Lemma \ref{lem:GrobnerLink}.  One may prefer to use $\widehat{M}$, as in the lemma, obtained from $M$ by subtracting column $3$ from column $4$, which has the effect of setting the copies of $z_{2,3}$ and $z_{3,3}$ in column $4$ to $0$.  
 
One aspect of this example that is typical of the general case is that the intersection of the submatrix of of $M'$ northwest of $z_{6,4}$ (playing the role of $e$) and that northwest of $z_{3,5}$ (from $z_{4,6}$ playing the role of $y$) is a rectangular matrix of indeterminates with $0$'s appearing only in full rows along the top and full columns along the western side of the submatrix.  This arrangement follows from the fact that $5237164$ has no obstruction of Type $1$ and gives rise to the decomposition of $LT(\det(M))$ described above into a product of the leading term of entries along the main diagonal of a matrix of indeterminates together with entries coming only from $q_a$ and entries coming only from $h_b$.
\end{example}

We are now prepared to use our lemmas above that establish Gr\"obner bases for $N_{y,I_w}$ and $C_{y,I_w}$ by induction to give the backward direction of Conjecture \ref{conj:mainresult}.  We recall a lemma that structures our proof below.  This lemma gives an implementation of the strategy employed throughout \cite{GMN13}.  

\begin{lemma}\label{lem:mainApp}\cite[Corollary 4.13]{KR}.  
Let $I = ( yq_1+r_1,\dots, yq_k+r_k,h_1,\dots, h_\ell )$ be a homogenous ideal of the polynomial ring $R$ with $y$ some variable of $R$ and $y$ not dividing any term of any $q_i$ nor any term of any $h_j$. Fix a term order $\sigma$ satisfying $LT(yq_i+r_i) = yLT(q_i)$ for each $i$.   Suppose that $\mathcal{G}_C = \{q_1,\dots, q_k,h_1,\dots, h_\ell\}$ and $\mathcal{G}_N = \{h_1,\dots, h_\ell\}$ are Gr\"obner bases for the ideals they generate, which we call $C$ and $N$, respectively, and that $\hgt(I)$, $\hgt(C)>\hgt(N)$.  Assume that $N$ has no embedded primes, and let $M = \begin{pmatrix}
q_1& \cdots & q_k\\
r_1& \cdots & r_k
\end{pmatrix}.$ If the ideal of $2$-minors of $M$ is contained in $N$, then the given generators of $I$ are a Gr\"obner basis.
\end{lemma}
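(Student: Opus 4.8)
The plan is to establish the Gr\"obner basis property by passing to the weight degeneration in the single variable $y$. Give $y$ the weight $1$ and every other variable the weight $0$; for $f\in R$, let $\init_y(f)$ denote the sum of the terms of $f$ of top $y$-degree, and set $\init_y(I) = (\init_y(f)\mid f\in I)$. Because $\sigma$ is $y$-compatible and $y$ divides no term of any $q_i$, $r_i$, $h_j$ (so $\init_y(yq_i+r_i)=yq_i$), the standard behavior of Gr\"obner bases under a weight degeneration reduces the claim to two statements: (i) the set $\init_y(\mathcal{G}) = \{yq_1,\dots,yq_k,h_1,\dots,h_\ell\}$ is a $\sigma$-Gr\"obner basis of the ideal it generates; and (ii) that ideal is all of $\init_y(I)$, i.e.\ $I$ admits a geometric vertex decomposition with respect to $y$ in the sense of \cite{KMY09}, with associated ideals $C$ and $N$. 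I would prove (i) and (ii) in turn and then assemble the conclusion.

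Step (i) I expect to be routine: since $q_i$ and $h_j$ are $y$-free, the $S$-polynomial of any pair drawn from $\init_y(\mathcal{G})$ is either equal to, or equal to $y$ times, the $S$-polynomial of the corresponding pair drawn from $\mathcal{G}_C = \{q_1,\dots,q_k,h_1,\dots,h_\ell\}$, and the latter reduce to $0$ because $\mathcal{G}_C$ (a Gr\"obner basis of $C$) and $\mathcal{G}_N = \{h_1,\dots,h_\ell\}$ (a Gr\"obner basis of $N$) are Gr\"obner bases; Buchberger's criterion then gives (i). This identifies the relevant ideal as $N+yC$, which coincides with $C\cap(N+(y))$ because $y$ is a nonzerodivisor modulo $C$ (the generators of $C$ being $y$-free).

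Step (ii) is the heart of the matter, and is where I would invoke the liaison--geometric vertex decomposition dictionary of \cite{KR}. One containment is free: from $yq_i+r_i\in I$ and the $y$-freeness of the $q_i,h_j$ we get $yq_i,h_j\in\init_y(I)$, hence $C\cap(N+(y))=N+yC\subseteq\init_y(I)$; what remains is $\init_y(I)\subseteq C\cap(N+(y))$. The key observation is that $\init_y(I)$ can exceed $N+yC$ only through elements of $I$ whose $y$-degree has dropped below what the listed generators predict, and the relations producing such elements are generated by the ``Pl\"ucker'' combinations $q_a(yq_b+r_b)-q_b(yq_a+r_a)=q_ar_b-q_br_a$ --- that is, by the $2\times2$ minors of $M$. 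The liaison package of \cite{KR} turns this into an exact statement: the height inequalities $\hgt(I),\hgt(C)>\hgt(N)$ together with the absence of embedded primes of $N$ force the candidate decomposition $\init_y(I)=C\cap(N+(y))$ to be non-degenerate, so that $R/C$ and $R/(N+(y))$ are $G$-linked and $R/I$ is obtained from $R/C$ by an elementary $G$-biliaison; in that picture the geometric vertex decomposition holds exactly when the $2\times2$ minors of the biliaison matrix $M$ lie in $N$, which is the hypothesis. Granting this, (ii) holds, and combining (i) and (ii) shows that $\mathcal{G}$ is a $\sigma$-Gr\"obner basis of $I$.

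The hard part, and the step I would spend the most effort on, is the second half of (ii): showing that the single containment ``$2$-minors of $M$ lie in $N$'' really forces $\init_y(I)\subseteq C\cap(N+(y))$. Unlike a term-order degeneration, a weight-vector degeneration is not governed by $S$-pairs, so a naive Buchberger argument does not close this gap; one genuinely needs the liaison input to exclude ``hidden'' low-$y$-degree elements of $\init_y(I)$, and it is precisely the unmixedness of $N$ and the height inequalities that make $R/C$ and $R/(N+(y))$ link without excess intersection, so that the relations recorded by $M$ are the only ones that can contribute.
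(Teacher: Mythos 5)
This lemma is quoted directly from \cite[Corollary 4.13]{KR}; the present paper does not prove it, so there is no internal argument to compare against. Assessing your attempt on its own terms: the weight-degeneration framework and the reduction to Steps (i) and (ii) are the right organizing ideas and match how [KR] sets things up --- your Step (ii) is exactly the assertion that $I$ has a geometric vertex decomposition at $y$. Step (i) is argued correctly, including the identification $N+yC=C\cap(N+(y))$ via the observation that $y$ is a nonzerodivisor on $R/C$, and the fact that $\init_\sigma\init_y(I)=\init_\sigma(I)$ for a $y$-compatible $\sigma$ correctly assembles (i) and (ii) into the conclusion.

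However, Step (ii) is never actually proved. You gesture at ``the liaison package of [KR]'' and the ``liaison--geometric vertex decomposition dictionary,'' but the statement under consideration \emph{is} a result of [KR], so invoking that dictionary as a black box is circular unless you specify which prior theorem (of [KR] or of \cite{GMN13}) you are applying and verify its hypotheses. What is missing is the argument that the $2$-minors of $M$ lying in $N$, together with the unmixedness of $N$ and the inequalities $\hgt(I),\hgt(C)>\hgt(N)$, force $\init_y(I)\subseteq C\cap(N+(y))$. One expects this to go by building a degree-one $R/N$-module isomorphism between $I/N$ and $(y)+N/N$ (well-definedness is precisely where $I_2(M)\subseteq N$ enters, via localization at the associated primes of $N$, which is where unmixedness matters), showing this is an elementary $G$-biliaison of the correct height, and then applying the Gr\"obner-transfer machinery of \cite{GMN13}. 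You correctly diagnose which hypothesis does which job, and your remark that a naive Buchberger argument cannot close the gap because weight degenerations are not governed by $S$-pairs is exactly right; but as written the proposal restates the theorem at the crucial step rather than proving it.
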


\begin{theorem}\label{thm:if}
If $w \in S_n$ is a permutation that has no obstruction of Type $1$, Type  $2$, or Type $3$, then $w$ is CDG.
\end{theorem}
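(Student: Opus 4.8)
The plan is to induct on the Coxeter length $\ell(w)=|D_w|=\hgt(I_w)$, and at the inductive step to run a geometric vertex decomposition of $I_w$ along the variable at a well-chosen lower outside corner, invoking Lemma~\ref{lem:mainApp}. The base case $\ell(w)=0$ ($w$ the identity, $I_w=(0)$) is trivial. For $\ell(w)\ge 1$, assume the statement for all permutations of smaller Coxeter length. That inductive hypothesis is precisely what Lemmas~\ref{lem:GrobnerQs} and~\ref{lem:GrobnerLink} need: the only use those lemmas make of ``Conjecture~\ref{conj:mainresult} in smaller length'' is to know that the permutation $w'$ with $I_{w'}=N_{y,I_w}$ is CDG, and $w'$ has smaller length by Lemma~\ref{lem:SchubertDel} and no obstruction by Corollary~\ref{cor:GrobnerDel}, so the induction applies to it. (The forward direction in smaller length, if one insists on the literal hypothesis, is the subject of Section~\ref{sect:forward} and is independent of this induction.) So by Lemma~\ref{lem:GrobnerLink} fix a lower outside corner $(i,j)$ of $D_w$ with $y=x_{i,j}$ for which the CDG generators $\{q_1,\dots,q_k,h_1,\dots,h_\ell\}$ of $C:=C_{y,I_w}$ are a diagonal Gr\"obner basis; write the CDG generators of $I:=I_w$ as $\{yq_1+r_1,\dots,yq_k+r_k,h_1,\dots,h_\ell\}$ with $y$ dividing no term of any $q_a$, $r_a$, or $h_b$, and put $N:=N_{y,I_w}=(h_1,\dots,h_\ell)$.

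Next I would verify the hypotheses of Lemma~\ref{lem:mainApp}. By Lemma~\ref{lem:SchubertDel}, $N=I_{w'}$ is a Schubert determinantal ideal, hence prime (in particular it has no embedded primes), with $\hgt(N)=\ell(w')=\ell(w)-1$; since $\hgt(I)=\ell(w)$ and $\hgt(C)=\ell(w)$ (the latter readily verified, e.g.\ from the GVD formalism of \cite{KR} or directly from the diagonal initial ideal of Lemma~\ref{lem:GrobnerLink}), the height conditions $\hgt(I),\hgt(C)>\hgt(N)$ hold. The Gr\"obner-basis inputs $\mathcal G_C=\{q_1,\dots,q_k,h_1,\dots,h_\ell\}$ and $\mathcal G_N=\{h_1,\dots,h_\ell\}$ are furnished by Lemma~\ref{lem:GrobnerLink} and by Corollary~\ref{cor:GrobnerDel} together with the inductive hypothesis. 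Because Lemma~\ref{lem:mainApp} is stated for a $y$-compatible term order, I would first carry out the argument for a $y$-compatible diagonal term order; since the leading term of each CDG generator of $I_w$ under a diagonal term order is independent of the choice of diagonal term order (this is built into the ladder- and zero-pattern determinantal structure of the CDG generators, cf.\ \cite{CDG15,Gor07}, and is already exploited in Lemma~\ref{lem:GrobnerQs}), being a Gr\"obner basis for one diagonal term order forces it for all of them by comparison of Hilbert functions.

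The remaining hypothesis — the ``single ideal containment'' advertised in the introduction — is that the ideal of $2\times 2$ minors of $M=\begin{pmatrix} q_1 & \cdots & q_k \\ r_1 & \cdots & r_k \end{pmatrix}$ lies in $N$, i.e.\ $q_a r_b - q_b r_a\in N_{y,I_w}$ for all $a,b$. Since $q_a r_b - q_b r_a = q_a(yq_b+r_b) - q_b(yq_a+r_a)$, the difference lies in $I_w$ automatically, and the content is that it descends to the strictly smaller ideal $I_{w'}$. I would obtain this by Laplace-expanding each $f_a=yq_a+r_a$ — a $(\rank_w(i,j)+1)$-minor with $y$ in its bottom-right position — along column $j$: then $q_a r_b - q_b r_a$ rewrites, after the $y$-terms cancel, as a signed sum of products $x_{p,j}\cdot(\text{minor using only columns}<j\text{ and rows}\le i)$ together with the symmetric terms coming from row $i$; because $(i,j)$ is a lower outside corner (so no essential box of $D_w$, hence none of $D_{w'}$, lies strictly southeast of it) and because $\rank$ is monotone, each such minor satisfies on $\mathcal X_{w'}$ a rank bound that already places it in $I_{w'}$, with the no-obstruction hypothesis controlling the remaining essential boxes so that the bookkeeping closes. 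Granting this, Lemma~\ref{lem:mainApp} gives that the CDG generators of $I_w$ are a Gr\"obner basis for the chosen term order, hence (by the previous paragraph) for every diagonal term order, so $w$ is CDG and the induction is complete. I expect the $2\times 2$-minor containment to be the crux: the induction, the heights, the primeness of $N$, and the assembly of the Gr\"obner bases for $C$ and $N$ all rest directly on the preceding lemmas, whereas pinning down $q_a r_b - q_b r_a\in N_{y,I_w}$ is where the geometry of $D_w$ — lower outside corners and the absence of the three obstruction types — must genuinely be used.
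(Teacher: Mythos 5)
Your high-level strategy matches the paper exactly: induct on Coxeter length, pick a lower outside corner $(i,j)$ with $y=x_{i,j}$ via Lemma~\ref{lem:GrobnerLink}, verify the primeness and height hypotheses of Lemma~\ref{lem:mainApp} from Lemma~\ref{lem:SchubertDel}, assemble the Gr\"obner bases for $C_{y,I_w}$ and $N_{y,I_w}$ from Lemma~\ref{lem:GrobnerLink} and Corollary~\ref{cor:GrobnerDel} plus induction, and reduce everything to the single containment $I_2(M)\subseteq N_{y,I_w}$. You also correctly flag that the inductive hypothesis only needs the backward direction of the conjecture. Your handling of the reduction to a $y$-compatible order is different from the paper's: you assert that the $\sigma$-lead terms of the CDG generators are diagonal-term-order independent and then compare Hilbert functions, whereas the paper instead refines the fixed $\sigma$ to a $y$-compatible $\sigma'$ and observes that, because $(i,j)$ is a lower outside corner, $y$ divides the $\sigma$-lead term of every CDG generator it appears in, so $\sigma$ and $\sigma'$ agree on those lead terms and reduction under $\sigma'$ transports to $\sigma$. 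Your route requires a separate argument that the CDG generators' lead terms are indeed term-order independent (a nontrivial claim when a zero block can interrupt a diagonal), so the paper's maneuver is safer.

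The genuine gap is in the step you yourself identify as the crux, $q_ar_b-q_br_a\in N_{y,I_w}$. Your sketch — Laplace-expand each $yq_a+r_a$ along column~$j$, cancel the $y$-terms, and claim the remainder is a signed sum of products $x_{p,j}\cdot(\text{minor})$ each lying in $I_{w'}$ by rank monotonicity — is not correct as stated. After Laplace expansion, $q_ar_b-q_br_a = \sum_{p<i}\pm x_{p,j}\bigl[(Q_a)_i(Q_b)_p-(Q_b)_i(Q_a)_p\bigr]$ where the $(Q_\bullet)_p$ are cofactor $\rank_w(i,j)$-minors; the bracketed factor is a \emph{difference of products} of two cofactor minors, not a single minor, and the individual cofactor minors are of size $\rank_w(i,j)$, so they satisfy no rank condition from $w'$ at all. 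There is no direct ``rank bound already places it in $I_{w'}$'' to invoke. The paper's actual argument sidesteps this entirely and is worth knowing: it lifts to the generic Fulton generators $yq_a'+r_a'$, observes that $I_2(M)+J=I_2(M')+J$ where $J$ is the ideal of dominant variables (and $J\subseteq N_{y,I_w}$), notes that $q_a'r_b'-q_b'r_a'=(yq_a'+r_a')q_b'-(yq_b'+r_b')q_a'$ lies in the generic $(\rank_w(i,j)+1)$-minor ideal on $X_{[i],[j]}$ whose natural generators are a diagonal Gr\"obner basis, and then — because $q_a'r_b'-q_b'r_a'$ is $y$-free and $\sigma$ is $y$-compatible — concludes its reduction must use only the $y$-free generators, each of which already lies in $N_{y,I_w}$. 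The $y$-compatibility reduction, which in your write-up looks like a side issue, is therefore load-bearing for the containment, not just a hypothesis of Lemma~\ref{lem:mainApp}.
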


\begin{proof}
Fix a diagonal term order $\sigma$.  We proceed by induction on the Coxeter length of $w$, with the case of length $0$ trivial.  Fix a permutation $w \in S_n$ with $n$ arbitrary and assume that $w$ has no obstruction of Type $1$, Type  $2$, or Type $3$.  If $D_w = \Dom(w)$, then $I_w$ is generated by variables, and so the result is immediate.  Hence, we assume $\Dom(w) \neq D_w$.

According to Lemma \ref{lem:GrobnerLink}, there is some lower outside corner $(i,j)$ of $D_w \setminus \Dom(w)$ corresponding to the variable $y = z_{i,j}$ so that, with our usual notation, $\{yq_1+r_1,\dots, yq_k+r_k,h_1,\dots, h_\ell\}$ are the CDG generators of $I_w$, the generators $\{q_1, \ldots, q_k, h_1, \ldots, h_\ell\}$ of $C_{y,I_w}$ form a Gr\"obner basis, and $LT(yq_a+r_a) = yLT(q_a)$ for each $a$.  By Corollary \ref{cor:GrobnerDel} and the inductive hypothesis, $\{h_1, \ldots, h_\ell\}$ is a Gr\"obner basis for $N_{y,I_w}$.  

We will show that $I_2 \begin{pmatrix} q_1 & \ldots & q_k \\ r_1 & \ldots & r_k \end{pmatrix} \subseteq  N_{y,I}$.  For each CDG generator, $yq_a+r_a$, let $yq_a'+r_a'$ be the corresponding natural generator of $I_w$, i.e., the generator taken in a matrix of indeterminates in which the variables corresponding to $\Dom(w)$ have not been set to $0$.  Let $J = (z_{i,j} \mid (i,j) \in \Dom(w))$.  Then $I_2 \begin{pmatrix} q_1 & \ldots & q_k \\ r_1 & \ldots & r_k \end{pmatrix}+J= I_2 \begin{pmatrix} q_1' & \ldots & q_k' \\ r_1' & \ldots & r_k' \end{pmatrix}+J$.  Hence, because $J \subseteq N_{y,I_w}$, it suffices to show $I_2 \begin{pmatrix} q_1' & \ldots & q_k' \\ r_1' & \ldots & r_k' \end{pmatrix} \subseteq N_{y,I_w}$.  

In order to show this last containment, we will temporarily consider a possibly different diagonal term order $\sigma'$, which will be a lexicographic term order in which $y$ is largest.  It is because $y$ is a lower outside corner that there must exist a term order that is both diagonal and is also lexicographic with $y$ largest.  Now because each $q'_ar'_b-q'_br'_a = (yq'_a+r'_a)q'_b-(yq'_b+r'_b)q'_a$ for $1 \leq a<b \leq k$ is an element of the ideal of  $(\rank_w(i,j)+1)$-minors in a matrix of indeterminates weakly northwest of $y$, an ideal for which the natural generators form a Gr\"obner basis under $\sigma'$ because it is a diagonal order, we know that $q'_ar'_b-q'_br'_a$ has a Gr\"obner reduction in terms of those generators.  Because $q'_ar'_b-q'_br'_a$ does not involve $y$ and $y$ is lexicographically largest under $\sigma'$, that reduction must be in terms of $(\rank_w(i,j)+1)$-minors weakly northwest of $y$ that do not involve $y$.  Each such minor is an element of $N_{y,I_w}$.  Hence, \[
I_2 \begin{pmatrix} q_1 & \ldots & q_k \\ r_1 & \ldots & r_k \end{pmatrix}+J = I_2 \begin{pmatrix} q_1' & \ldots & q_k' \\ r_1' & \ldots & r_k' \end{pmatrix}+J \subseteq N_{y,I_w},
\] as desired. 

The height requirements $\hgt{I_w}, \hgt{C_{y,I_w}} > \hgt{N_{y,I_w}}$ are immediate from the fact that $N_{y,I_w}$ is prime \cite[Proposition 3.3]{Ful92} together with the proper containment of $N_{y,I_w}$ in each of $I_w$ and $C_{y,I_w}$.  The result now follows from Lemma \ref{lem:mainApp}.
\end{proof}

Notice that we do not claim that the Gr\"obner reduction of $q'_ar'_b-q'_br'_a$ with respect to $\sigma'$ gives rise to a Gr\"obner reduction of $a_ar_b-q_br_a$ in terms of the CDG generators with respect to $\sigma$.  Lemma \ref{lem:mainApp} requires only that we demonstrate an ideal containment.

\begin{corollary}\label{cor:mainIf}
If $w \in S_n$ avoids all eight of the following patterns, then $w$ is CDG: \[
13254, 21543, 214635, 215364, 215634, 241635, 315264, 4261735.
\]
\end{corollary}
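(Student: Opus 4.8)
The plan is to obtain this statement as a direct corollary of Theorem \ref{thm:if}, by combining it with the contrapositives of Lemmas \ref{lem:obstruction1}, \ref{lem:obstruction2}, and \ref{lem:obstruction3}. Assume $w \in S_n$ avoids all eight of the listed patterns. First I would argue that $w$ has no obstruction of Type $1$: if it did, then Lemma \ref{lem:obstruction1} would force $w$ to contain one of $21543$, $215634$, $214635$, $215364$, or $13254$, each of which appears among the eight forbidden patterns, contradicting the hypothesis. The identical argument using Lemma \ref{lem:obstruction2}, whose pattern list $21543, 214635, 241635, 215364, 315264$ is again a subset of the eight, rules out an obstruction of Type $2$, and Lemma \ref{lem:obstruction3}, whose pattern list is precisely the eight, rules out an obstruction of Type $3$.

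Once $w$ is known to avoid obstructions of all three types, Theorem \ref{thm:if} applies verbatim and yields that $w$ is CDG, which is the assertion.

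The substantive work---the combinatorial analysis of the three obstruction types carried out in Lemmas \ref{lem:obstruction1}--\ref{lem:obstruction3}, together with the inductive liaison and geometric-vertex-decomposition argument of Theorem \ref{thm:if}---has already been completed, so there is no genuine obstacle at this stage; the corollary is pure bookkeeping. The only point worth recording explicitly is that the union of the three pattern lists occurring in the obstruction lemmas is contained in the eight-element list of the statement (in fact it equals it, since Lemma \ref{lem:obstruction3} already invokes all eight), so that avoidance of the eight patterns really does preclude every obstruction. This is immediate by inspection.
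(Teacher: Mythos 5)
Your proof is correct and follows precisely the same route as the paper: invoke the contrapositives of Lemmas \ref{lem:obstruction1}, \ref{lem:obstruction2}, and \ref{lem:obstruction3} to rule out obstructions of Types $1$, $2$, and $3$, then apply Theorem \ref{thm:if}. The extra bookkeeping (verifying that the union of the three lemmas' pattern lists equals the eight-element list) is a sensible thing to make explicit but is the same check the paper performs implicitly.
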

\begin{proof}
If $w \in S_n$ avoids the patterns above, then it does not have an obstruction of Type $1$, Type  $2$, or Type $3$ by Lemmas \ref{lem:obstruction1}, \ref{lem:obstruction2}, and \ref{lem:obstruction3}, and so the result follows from Theorem \ref{thm:if}.
\end{proof}

\section{The non-CDG Permutations}\label{sect:forward}

In this section, we show that a permutation $w \in S_n$ that contains one of the eight permutations listed in Conjecture \ref{conj:mainresult} is not CDG.   We will show that slightly stronger claim that if $w \in S_n$ contains one of the eight listed patterns, then the CDG generators do not form a Gr\"obner basis under \emph{any} diagonal term order.  

Note that a generating set for an ideal $I$ in a polynomial ring $R$ forms a Gr\"obner basis for $I$ if and only if that generating set forms a Gr\"obner basis in the larger polynomial ring $R[x]$.   For that reason, we may view all ideals that arise in Theorem \ref{onlyif} as ideals of a polynomial ring in $(n+1)^2$ variables.  An example illustrating the argument of Theorem \ref{onlyif} follows immediately after the proof.

\begin{theorem}\label{onlyif}
Let $Z$ by an $(n+1) \times (n+1)$ matrix of indeterminates, and let $\sigma$ be a diagonal term order on $R=\mathbb{C}[Z]$.  Let $d \leq n$, and suppose that there is some $w \in S_d$ so that the CDG generators of $I_w$ do not form a Gr\"obner basis under $\sigma$.  If $v \in S_{n+1}$ contains $w$, then the CDG generators of $I_v$ do not form a  Gr\"obner basis under $\sigma$.
\end{theorem}
\begin{proof}
By induction, we may assume that $w = w_1 \ldots w_n \in S_n$ and that $v = v_1 \ldots v_{n+1}$ with $v_1 \ldots v_{i-1} v_{i+1} \ldots v_{n+1} = w$ for some $ 1 \leq i \leq n+1$.  

Recall that $D_w$ is obtained from $D_v$ by deleting row $i$ and column $v_i$.  With $Z_v$ an $(n+1) \times (n+1)$ matrix of indeterminates with $z_{i,j}$ set to $0$ whenever $(i,j) \in \Dom(v)$, identify $Z_w$ with the $n \times n$ submatrix of $Z_v$ obtained by the deletion of row $i$ and column $v_i$.   Consider that the rows of $Z_w$ to be labeled $1, \ldots, i-1, i+1, \ldots n+1$ and the columns of to be labeled $1, \ldots, v_{i-1},v_{i+1}, \ldots, n+1$.   

Let $G_w = \{\delta_1,\ldots, \delta_\ell\}$ for some $\ell \in \mathbb{N}$ be the set of CDG generators of $w$.  Assume that $G_w$ is ordered so that $\delta_1, \ldots, \delta_k$ are determined by rank conditions in boxes $(a,b) \in \Ess(w)$ with $a<i$ or $b<v_i$ and that $\delta_{k+1}, \ldots, \delta_\ell$ are determined by rank conditions in boxes $(a,b)$ with $a>i$ and $b>v_i$.  

Let $f$ and $g$ be two CDG generators of $I_w$ whose $s$-polynomial $s = s(f,g)$ does not reduce to $0$ by $G_w$ under $\sigma$.  Let $r$ denote the remainder of $s$ under the deterministic division algorithm with respect to $G_w$ and the chosen ordering on $G_w$.  Then we may write $r = s+\sum \alpha_j \delta_j$ where the leading term of $\alpha_j \delta_j$ is not in the ideal generated by the leading terms of the $\delta_{j'}$ with $j'<j$.  By definition of remainder, no leading term of any element of $G_w$ divides the leading term of $r$ though $r \in I_w$.

Let $G_v$ denote the set of CDG generators of $I_v$.  We may write \[
G_v = \{\delta_1, \ldots, \delta_k, z_{i,v_i}\delta_{k+1}+\varepsilon_{k+1}, \ldots, z_{i,v_i}\delta_{\ell}+\varepsilon_{\ell}, \delta_{\ell+1}, \ldots, \delta_{m}\},
\] where the $\delta_j$ with $\ell < j \leq m$ are the elements of $G_v$ involving at least one variable from row $i$ or column $v_i$ other than $z_{i,v_i}$, and the others are as expected.  We will use $r$ to construct an element of $I_v$ whose leading term is not divisible by any leading term of $G_v$.  

If the southeast corner of the submatrix of $Z_w$ determining $f$ is a box $(a,b)$ satisfying $a<i$ or $b<v_i$, then $f \in G_v$.  In that case, define $f' = f$.  If $a>i$ and $b>v_i$, then take $f'$ to be (up to sign) the element of $G_v$ determined by the rows determining $f$ together with row $i$ and the columns determining $f$ together with column $v_i$.  After possibly multiplying by $-1$, $f' = z_{i,v_i}f+\varepsilon_f$, where every term of $\varepsilon_f$ is divisible by exactly one variable from row $i$ and exactly one variable from column $v_i$, neither of which is $z_{i,v_i}$.  Define $g'$ similarly, and take $s' = s(f',g')$ to be their $s$-polynomial.  

If $f' = f$ and $g' = g$, then $s' = s$.  Because no term of $s$ is divisible by any variable in row $i$ or in column $v_i$ of $Z_v$, if $s$ has a reduction by the elements of $G_v$, it must have a reduction by $\{\delta_1, \ldots, \delta_k\}$, which is known not to exist.  

If $f' = z_{i,v_i}f+\varepsilon_f$ and $g' = g$, let $LT(f)$ denote the leading term of $f$, $LT(g)$ denote the leading term of $g$, and $G$ the greatest common divisor of $LT(f)$ and $LT(g)$.  Set \[
t = \frac{LT(g)}{G}f'-\frac{z_{i,v_i}LT(f)}{G}g = z_{i,v_i}s+\frac{LT(g)}{G}\varepsilon_f \in I_v.
\]  (Notice that whenever $z_{i,v_i}LT(f)$ is the leading term of $f'$, $t$ will coincide with the $s$-polynomial of $f'$ and $g'$.)  

We claim that $t$ cannot be reduced by $G_v$.  We begin by modifying $t$ by multiples of the $\delta_i$ for $1 \leq i \leq k$ following the deterministic division algorithm in $G_w$ to obtain $t' = z_{i,v_i}r+\frac{LT(g)}{G}\varepsilon_f \in I_v$.  Notice that $\frac{LT(g)}{G}\varepsilon_f$ does not involve $z_{i,v_i}$ and that every element of $G_v$ involving $z_{i,v_i}$ involves it only as a multiple of some $\delta_j$ with $k<j\leq \ell$.  Hence, no term of any $\delta_j$ divides any term of $z_{i,v_i}r$, and the division algorithm will never call for the addition of any multiple of any $z_{i,v_i}\delta_j$.  Therefore, no newly added polynomial could have any term that cancels with any term of $z_{i,v_i}r$, from which it follows that $t'$ is an element of $I_v$ with no reduction by $G_v$.  

Finally, assume that $f' = z_{i,v_i}f+\varepsilon_f$ and $g' = z_{i,v_i}g+\varepsilon_g$.  Then \[
t = \frac{LT(g)}{G}f'-\frac{LT(f)}{G}g' = s+\frac{LT(g)}{G}\varepsilon_f-\frac{LT(f)}{G}\varepsilon_g \in I_v.
\] Again, we modify by multiples of the $\delta_j$ with $1 \leq j \leq k$ to obtain $t' = r+\frac{LT(g)}{G}\varepsilon_f-\frac{LT(f)}{G}\varepsilon_g$.  Because no leading term of any $\delta_j$ with $1 \leq j \leq k$ divides any term of $r$ and because every term of every other element of $G_v$ involves a variable from row $i$ or column $v_i$, which $r$ does not, no further steps in the division algorithm can eliminate any term of $r$, and so $t$ has no reduction by the elements of $G_v$.  

It follows that in all cases, there is an element of $I_v$ that has no Gr\"obner reduction by $G_v$, and so $G_v$ is not a diagonal Gr\"obner basis of $I_v$.
\end{proof}

\begin{example}
Let $w = 21543$ and $v = 216354$, in which case $(i, v_i) = (4,3)$.  The visualizations of the Rothe diagrams of $w$ and $v$ are below with the modified row and column indexing for the diagram of $w$ described in the proof of Theorem \ref{onlyif}.  \[
 \begin{minipage}{.4\textwidth}
\hspace{0.5cm} 
  \begin{tikzpicture}[x=1.5em,y=1.5em]
      \draw[color=black, thick](0,1)rectangle(5,6);
           \node at (2.5,6.5) {$w = 21543$};
           \node at (-0.5,1.5) {$6$};
           \node at (-0.5,2.5) {$5$};
           \node at (-0.5,3.5) {$3$};
          \node at (-0.5,4.5) {$2$};
          \node at (-0.5,5.5) {$1$};
         \node at (4.5,0.5) {$6$};
           \node at (3.5,0.5) {$5$};
           \node at (2.5,0.5) {$4$};
          \node at (1.5,0.5) {$2$};
          \node at (0.5,0.5) {$1$};
      \draw[color=black](1,4)rectangle(2,5);
      \draw[color=black](3,4)rectangle(4,5);
       \draw[color=black](1,3)rectangle(2,4);
      \draw[color=black](3,3)rectangle(4,4);
      \draw[color=black](1,2)rectangle(2,3);      
     \draw[thick,color=red] (5,5.5)--(1.5,5.5)--(1.5,1);
     \draw[thick,color=red] (5,4.5)--(0.5,4.5)--(0.5,1);
     \draw[thick,color=red] (5,3.5)--(4.5,3.5)--(4.5,1);
     \draw[thick,color=red] (5,2.5)--(3.5,2.5)--(3.5,1);
     \draw[thick,color=red] (5,1.5)--(2.5,1.5)--(2.5,1);
     \filldraw [black](1.5,5.5)circle(.1);
     \filldraw [black](0.5,4.5)circle(.1);
      \filldraw [black](4.5,3.5)circle(.1);
     \filldraw [black](3.5,2.5)circle(.1);
     \filldraw [black](2.5,1.5)circle(.1);
      \draw[color=black](0,5)rectangle(1,6);
      \draw[color=black](1,5)rectangle(2,6);  
      \draw[color=black](2,5)rectangle(3,6);
      \draw[color=black](3,5)rectangle(4,6); 
      \draw[color=black](4,5)rectangle(5,6);
     \draw[color=black](0,4)rectangle(1,5); 
      \draw[color=black](2,4)rectangle(3,5);
      \draw[color=black](4,4)rectangle(5,5);
      \draw[color=black](0,3)rectangle(1,4); 
      \draw[color=black](2,3)rectangle(3,4);
      \draw[color=black](4,3)rectangle(5,4);
      \draw[color=black](0,2)rectangle(1,3); 
      \draw[color=black](2,2)rectangle(3,3);
      \draw[color=black](3,2)rectangle(4,3);
      \draw[color=black](4,2)rectangle(5,3);
      \draw[color=black](1,1)rectangle(2,2);  
      \draw[color=black](2,1)rectangle(3,2);
      \draw[color=black](3,1)rectangle(4,2); 
      \draw[color=black](4,1)rectangle(5,2);
     \end{tikzpicture}
  \end{minipage}    \begin{minipage}{.4\textwidth}
\hspace{1cm} 
  \begin{tikzpicture}[x=1.5em,y=1.5em]
      \draw[color=black, thick](0,1)rectangle(6,7);
           \node at (3,7.5) {$v=216354$};
           \node at (-0.5,1.5) {$6$};
           \node at (-0.5,2.5) {$5$};
          \node at (-0.5,3.5) {$4$};
           \node at (-0.5,4.5) {$3$};
          \node at (-0.5,5.5) {$2$};
          \node at (-0.5,6.5) {$1$};
          \node at (5.5,0.5) {$6$};
         \node at (4.5,0.5) {$5$};
           \node at (3.5,0.5) {$4$};
           \node at (2.5,0.5) {$3$};
          \node at (1.5,0.5) {$2$};
          \node at (0.5,0.5) {$1$};
     \draw[color=black](0,6)rectangle(1,7);
      \draw[color=black](1,6)rectangle(2,7);
      \draw[color=black](1,4)rectangle(2,5);
      \draw[color=black](3,4)rectangle(4,5);
       \draw[color=black](1,3)rectangle(2,4);
      \draw[color=black](3,3)rectangle(4,4);
      \draw[color=black](1,2)rectangle(2,3);      
     \draw[thick,color=red] (6,6.5)--(1.5,6.5)--(1.5,1);
     \draw[thick,color=red] (6,5.5)--(0.5,5.5)--(0.5,1);
     \draw[thick,color=red] (6,4.5)--(5.5,4.5)--(5.5,1);
     \draw[thick,color=red] (6,3.5)--(2.5,3.5)--(2.5,1);
     \draw[thick,color=red] (6,2.5)--(4.5,2.5)--(4.5,1);
     \draw[thick,color=red] (6,1.5)--(3.5,1.5)--(3.5,1);
     \filldraw [black](1.5,6.5)circle(.1);
     \filldraw [black](0.5,5.5)circle(.1);
      \filldraw [black](5.5,4.5)circle(.1);
     \filldraw [black](2.5,3.5)circle(.1);
     \filldraw [black](4.5,2.5)circle(.1);
      \filldraw [black](3.5,1.5)circle(.1);
      \draw[color=black](2,6)rectangle(3,7);
      \draw[color=black](3,6)rectangle(4,7); 
      \draw[color=black](4,6)rectangle(5,7);
      \draw[color=black](5,6)rectangle(6,7); 
      \draw[color=black](0,5)rectangle(1,6);
      \draw[color=black](1,5)rectangle(2,6);  
      \draw[color=black](2,5)rectangle(3,6);
      \draw[color=black](3,5)rectangle(4,6); 
      \draw[color=black](4,5)rectangle(5,6);
      \draw[color=black](5,5)rectangle(6,6); 
     \draw[color=black](0,4)rectangle(1,5); 
      \draw[color=black](2,4)rectangle(3,5);
      \draw[color=black](4,4)rectangle(5,5);
      \draw[color=black](5,4)rectangle(6,5); 
      \draw[color=black](0,3)rectangle(1,4); 
      \draw[color=black](2,3)rectangle(3,4);
      \draw[color=black](4,3)rectangle(5,4);
      \draw[color=black](5,3)rectangle(6,4); 
      \draw[color=black](0,2)rectangle(1,3); 
      \draw[color=black](2,2)rectangle(3,3);
      \draw[color=black](3,2)rectangle(4,3);
      \draw[color=black](4,2)rectangle(5,3);
      \draw[color=black](5,2)rectangle(6,3); 
      \draw[color=black](1,1)rectangle(2,2);  
      \draw[color=black](2,1)rectangle(3,2);
      \draw[color=black](3,1)rectangle(4,2); 
      \draw[color=black](4,1)rectangle(5,2);
      \draw[color=black](5,1)rectangle(6,2); 
     \end{tikzpicture}
  \end{minipage}   
       \]

Let $f = \begin{vmatrix} z_{2,1} & z_{2,2} & z_{2,4} \\
z_{3,1} & z_{3,2} & z_{3,4}\\
z_{5,1} & z_{5,2} & z_{5,4}
 \end{vmatrix}$ and $g = \begin{vmatrix} 0 & z_{1,4} & z_{1,5} \\
z_{2,1} & z_{2,4} & z_{2,5}\\
z_{3,1} & z_{3,4} & z_{3,5}
 \end{vmatrix}$, in which case, under any diagonal term order, $s = s(f,g)= z_{1,4}z_{3,5}f-z_{3,2}z_{5,4}g$ and \begin{align*}
r&= -z_{1,4}z_{2,2}z_{3,1}z_{3,5}z_{5,4}+z_{1,4}z_{2,2}z_{3,4}z_{3,5}z_{5,1}+ z_{1,4}z_{2,4}z_{3,1}z_{3,5}z_{5,2}\\
&-z_{1,4}z_{2,4}z_{3,2}z_{3,5}z_{5,1} +z_{1,4}z_{2,5}z_{3,1}z_{3,2}z_{5,4}-z_{1,4}z_{2,5}z_{3,1}z_{3,4}z_{5,2}\\
&+z_{1,5}z_{2,2}z_{3,1}z_{3,4}z_{5,4}-z_{1,5}z_{2,2}z_{3,4}^{2}z_{5,1}-z_{1,5}z_{2,4}z_{3,1}z_{3,2}z_{5,4} +z_{1,5}z_{2,4}z_{3,2}z_{3,4}z_{5,1}.
 \end{align*}

 Then $g' = g$ and $f' = \begin{vmatrix} z_{2,1} & z_{2,2} & z_{2,3} & z_{1,4}\\
z_{3,1} & z_{3,2} & z_{3,3} & z_{3,4}\\
z_{4,1} & z_{4,2} & z_{4,3} & z_{4,4}\\
z_{5,1} & z_{5,2} & z_{5,3} & z_{5,4}
 \end{vmatrix} = z_{4,3}f+\varepsilon_f$, where $\varepsilon_f$ consists of the terms of $f'$ arising as products of $z_{2,3}$, $z_{3,3}$, and $z_{5,3}$ with their respective cofactors (or, equivalently, products of $z_{4,1}$, $z_{4,2}$, and $z_{4,4}$ and their respective cofactors).  Set $t = z_{4,3}s+z_{1,4}z_{3,5}\varepsilon_f$ and $ t' = z_{4,3}r+(z_{1,4}z_{3,5}-z_{1,5}z_{3,4}) \varepsilon_f$.  The fact that $z_{4,3}r$ prevents the reduction of $t'$ to $0$ by $G_v$ follows from the fact that no term of $r$ is divisible by the leading term of any element of $G_w$.
\end{example}

\begin{corollary}\label{cor:mainOnlyif}
Let $w$ be a permutation. If there exists a diagonal term order $\sigma$ so that the CDG generators of $I_w$ form a Gr\"obner basis, then $w$ avoids all eight of the patterns \[
13254, 21543, 214635, 215364, 215634, 241635, 315264, 4261735.
\]
\end{corollary}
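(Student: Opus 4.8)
The plan is to prove the contrapositive: if $w \in S_n$ contains one of the eight listed patterns, then $w$ is not CDG. Let $v \in S_k$ be the contained pattern. By Theorem~\ref{onlyif} the non-CDG property propagates from $v$ up to $w$ (inserting one letter at a time, as in that proof, if one prefers), so the whole statement reduces to the assertion that each of the eight permutations
\[
13254,\ 21543,\ 214635,\ 215364,\ 215634,\ 241635,\ 315264,\ 4261735
\]
is itself not CDG.

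That assertion is a finite verification. For each pattern $v$ above, the CDG generators $G_v$ are read off directly from the Rothe diagram and rank matrix of $v$; then, fixing any single diagonal term order $\sigma$, it suffices to exhibit two generators $f, g \in G_v$ whose $S$-polynomial, reduced by the division algorithm relative to $G_v$, leaves a nonzero remainder $r \in I_v$ no term of which is divisible by $LT(h)$ for any $h \in G_v$. Such a triple $(f, g, r)$ certifies that $G_v$ is not a Gr\"obner basis under $\sigma$, hence that $v$ is not CDG. The amount of bookkeeping can be halved using the symmetry $w \leftrightarrow w^{-1}$: transposing the matrix of indeterminates carries $I_w$ to $I_{w^{-1}}$, sends CDG generators to CDG generators, and sends diagonal term orders to diagonal term orders, so $w$ is CDG if and only if $w^{-1}$ is. Four of the patterns ($13254$, $21543$, $215634$, $4261735$) are involutions and the other four pair off as $(214635, 215364)$ and $(241635, 315264)$, leaving six base cases. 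These are small enough to do by hand; several already appear in \cite{HPW}, and all can be confirmed in a computer algebra system such as \texttt{Macaulay2}.

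I expect the only genuine obstacle to be the bookkeeping in those six explicit $S$-polynomial reductions: in each case one must pin down a pair of CDG generators whose $S$-polynomial really does resist reduction and record the surviving remainder. There is no conceptual difficulty beyond this, since all of the structural content of the forward direction has been absorbed into Theorem~\ref{onlyif}, whose proof already shows how a non-reducing remainder for $I_v$ is promoted to one for $I_w$ when a single letter is inserted.
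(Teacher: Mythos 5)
Your proposal matches the paper's proof exactly: the paper derives the corollary immediately from Theorem~\ref{onlyif} together with explicit (finite) computations verifying that each of the eight listed permutations is not CDG. The symmetry observation you add (that $w$ is CDG iff $w^{-1}$ is, via transposition) is already noted in the paper at the start of Section~\ref{sect:backward} and is a legitimate way to cut the case count to six.
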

\begin{proof}
This result is immediate from Theorem \ref{onlyif} together with explicit computations in the case of the eight permutations listed in Conjecture \ref{conj:mainresult}.
\end{proof}

\section{Unifying characteristics of the non-CDG permutations}\label{sect:intuition}
We conclude by describing briefly how \cite[Theorem 2.1(a)]{KMY09} can be used to understand two properties that prevent the permutations listed in Conjecture \ref{conj:mainresult} from being CDG.  We note first that $13254$ has no dominant part and so its failure to be CDG is due to the fact that it contains $2143$ \cite[Theorem 6.1]{KMY09}.  For the remainder of this section, we consider the other seven permutations, all of which have nontrivial dominant parts.  

For an arbitrary rank matrix, understand the CDG generators to be defined analogously to the case of defining ideals of matrix Schubert varieties.  If any of the permutations listed in Conjecture \ref{conj:mainresult} were CDG, \cite[Theorem 2.1(a)]{KMY09} would require that either the ideal determined by the rank matrix $N_1$ or the ideal determined by the rank matrix $N_2$, below, have a CDG diagonal Gr\"obner basis, which they are easily seen not to:  \newline
\begin{minipage}{0.33\textwidth}\centering\[\hspace{2cm}
N_1 = \begin{bmatrix}
        0 & 1 & 1  \\
        1 & 1 & 1 \\
        1 & 2 &2  \\
        1 & 2 & 2  
        \end{bmatrix} 
\]\end{minipage}\begin{minipage}{.33\textwidth}
\centering and 
\end{minipage} \begin{minipage}{0.33\textwidth} \centering\[\hspace{-2cm}
N_2 = \begin{bmatrix}
        1 & 1 & 1  \\
        1 & 1 & 2  \\
        1 & 2 &2  \\
        \end{bmatrix}.
\] 
\end{minipage}

The rank matrix $N_1$ encodes interference from $\Dom(w)$ that prevents $I_w$ from being CDG, and $N_2$ encodes failures to be vexillary that are sufficiently far from $\Dom(w)$ that they are not handled by replacing Fulton generators by CDG generators. 

\begin{example} 
Consider the rank matrix $M_w$ of the permutation $w = 21543$ with respect to any lexicographic term order in which $y = z_{3,4}$ is largest, with essential boxes marked by ${\color{burntorange}\square}$.

\[
   M_w =  \begin{bmatrix}
        {\color{burntorange}\boxed{0}} & 1 & 1 & 1 & 1 \\
        1 & 2 & 2 & 2 &2 \\
        1 & 2 &2 &  {\color{burntorange}\boxed{2}}  & 3 \\
        1 & 2 &  {\color{burntorange}\boxed{2}}  & 3 &4 \\
        1 & 2 &3 & 4 & 5
\end{bmatrix}
\]

If the CDG generators of $I_w$ were a Gr\"obner basis, then \cite[Theorem 2.1(a)]{KMY09} would require the CDG generators of $C_{y,I_w}$, which is the ideal determined by $N_1$ and plays the role of the link in a geometric vertex decomposition at $y$, also to be a Gr\"obner basis.  
\end{example}

We leave it to the reader to use (possibly repeated) application of \cite[Theorem 2.1(a)]{KMY09} to obtain $N_1$ or $N_2$ from the rank matrices of the other six permutations listed in Conjecture \ref{conj:mainresult}.

\bibliographystyle{plain}

\end{document}